\newcommand{\leqnomode}{\tagsleft@true}
\newcommand{\reqnomode}{\tagsleft@false}
\numberwithin{equation}{section}
\theoremstyle{definition}
\newtheorem{theorem}{Theorem} 
\newtheorem*{theorem*}{Theorem}
\numberwithin{theorem}{section}
\newtheorem{corollary}[theorem]{Corollary} 
\newtheorem{lemma}[theorem]{Lemma}
\newtheorem{proposition}[theorem]{Proposition}
\newtheorem{question}[theorem]{Question}
\newtheorem{hypothesis}[theorem]{Hypothesis} 
\newtheorem{remark}[theorem]{Remark}
\newtheorem{definition}[theorem]{Definition} 
\newtheorem{example}[theorem]{Example}
\newtheorem{notation}[theorem]{Notation} 
\DeclareMathOperator\Aut{Aut}
\DeclareMathOperator\Ann{Ann}
\DeclareMathOperator\End{End}
\DeclareMathOperator\GKdim{GKdim}
\DeclareMathOperator\id{id}
\DeclareMathOperator\Maxspec{Maxspec}
\DeclareMathOperator\orb{orb}
\DeclareMathOperator\ord{ord}
\DeclareMathOperator\Supp{Supp}
\DeclareMathOperator\Spec{Spec}
\DeclareMathOperator\trdeg{tr.deg}
\renewcommand{\Bbb}{\mathbb}
\newcommand\CC{\mathbb C}
\newcommand\NN{\mathbb N}
\newcommand\PP{\mathbb P}
\newcommand\ZZ{\mathbb Z}
\newcommand\cO{\mathcal O}
\newcommand\frm{\mathfrak{m}}
\newcommand\frn{\mathfrak{n}}
\newcommand\fsl{\mathfrak{sl}}
\newcommand\bp{\mathbf p}
\newcommand\bq{\mathbf q}
\newcommand\cnt{\mathcal Z}
\newcommand\css{\mathcal S}
\newcommand\inv{^{-1}}
\newcommand\iso{\cong}
\newcommand\kk{\mathds{k}}
\newcommand{\sto}{\ensuremath{\rightarrow}}
\renewcommand{\to}{\ensuremath{\longrightarrow}}
\mathchardef\mhyphen="2D
\newcommand{\Frac}{\operatorname{Frac}}
\newcommand{\wmod}{\mathrm{\mhyphen wmod}}
\newcommand{\wmodO}{\mathrm{\mhyphen wmod_{\cO}}}
\newcommand\grp[1]{{\langle #1 \rangle}}
\definecolor{darkgreen}{RGB}{55,138,0}
\definecolor{orange}{RGB}{255,120,0}
\title{Weight modules over Bell--Rogalski algebras}
\author[Gaddis]{Jason Gaddis}
\address{(Gaddis) Department of Mathematics,  Miami University, Oxford, Ohio 45056} 
\email{gaddisj@miamioh.edu}
\author[Rosso]{Daniele Rosso}
\address{(Rosso) Department of Mathematics and Actuarial Science, Indiana University Northwest, Gary, IN 46408} 
\email{drosso@iu.edu}
\author[Won]{Robert Won}
\address{(Won) Department of Mathematics, The George Washington University, Washington, DC 20052}
\email{robertwon@gwu.edu}
\subjclass[2020]{16W50,16D30,16D90,16S38}
\keywords{Graded algebra, weight module, generalized Weyl algebra}
\begin{document}

\begin{abstract}
We study a class of $\ZZ$-graded algebras introduced by Bell and Rogalski. Their construction generalizes in large part that of rank one generalized Weyl algebras (GWAs). We establish certain ring-theoretic properties of these algebras and study their connection to GWAs. We classify the simple weight modules in the infinite orbit case and provide a partial classification in the case of orbits of finite order.
\end{abstract}

\maketitle

\section{Introduction}

The theory of weight modules has its origin in Lie theory and generalizes well-known aspects of linear algebra. Much of the early study of generalized Weyl algebras (GWAs) and related algebras focused on generalizing the theory of the enveloping algebra $U(\fsl_2)$. In particular, the primitive quotients of $U(\fsl_2)$ are rank one GWAs over the polynomial ring in one variable. The formal definition of a GWA is given in Definition \ref{defn.GWA}.

In \cite{BR}, Bell and Rogalski classified $\ZZ$-graded simple rings that are birationally commutative. The canonical example of such an algebra is the Weyl algebra. For their classification, they introduced a construction that generalizes (most) rank one generalized Weyl algebras (GWAs). In this work, we study ring-theoretic and module-theoretic aspects of Bell and Rogalski's algebras, inspired by those for GWAs.

Throughout this paper, we fix $\kk$ to be a field. All rings in this paper will be associative unital $\kk$-algebras. The algebras in \cite{BR} may be realized as subalgebras of certain Ore extensions that we describe now. For an algebra $R$ and $\sigma \in \Aut_{\kk}(R)$, the \emph{skew Laurent algebra} over $R$, denoted $R[t,t\inv;\sigma]$ is the algebra generated by $t,t\inv$ over $R$ with the rule that $t^{\pm 1}r=\sigma^{\pm 1}(r)t^{\pm}$ for all $r\in R$.

\begin{definition}\label{defn.BR}
Let $R$ be an algebra with an automorphism $\sigma \in \Aut_{\kk}(R)$. Let $H$ and $J$ be two-sided ideals of $R$. The corresponding \emph{Bell--Rogalski algebra} (or \emph{BR algebra}, for short) is the algebra
\[ R(t,\sigma,H,J) = \bigoplus_{n \in \ZZ} I^{(n)} t^n \subseteq R[t,t^{-1};\sigma]\]
where $I^{(0)} = R$, $I^{(n)} = J \sigma(J) \cdots \sigma^{n-1}(J)$ for $n \geq 1$ and $I^{(n)} = \sigma^{-1}(H) \sigma^{-2}(H) \cdots \sigma^n(H)$ for $n \leq -1$. Throughout we assume that the ideals $H$ and $J$ are chosen so that $I^{(n)} \neq 0$ for all $n$.
\end{definition}

The condition $I^{(n)} \neq 0$ for all $n$ is meant to ensure that the algebra $B$ is ``truly" $\ZZ$-graded. Note that if $R$ is a domain and $H,J \neq 0$, then $\Ann_R(J)=\Ann_R(H) = 0$, and hence this condition is satisfied automatically.

When $H = R$ and $\sigma=\id_R$, then the corresponding BR algebra is just the extended Rees ring $R[Jt,t\inv]$ of the ideal $J$. More generally, if $H = R$ and $\sigma(J) \subset J$, then the corresponding BR algebra is a Zhang twist of $R[Jt,t\inv]$ \cite{Ztwist}. However, we will be most interested in the case where $H$ and $J$ are not both $\sigma$-invariant. 

Our definition is more general than the one given in \cite{BR}, where Bell and Rogalski were mainly interested in simple, birationally commutative rings. First, they require $R$ to be a commutative noetherian algebra that is a domain. Throughout most of this paper, we will assume this as well, but note that it is not strictly necessary for the definition. More significantly, for $B = R(t,\sigma,H,J)$ with $R$ commutative, let 
\begin{align}\label{eq.SB}
\css(B) := \{\bp\in\Spec(R)~|~\bp\supset HJ\} \simeq \Spec(R/(HJ)) \simeq \Spec(R/H)\cup \Spec(R/J).
\end{align}
We say $\css(B)$ is \emph{$\sigma$-lonely} if $\sigma^i(\css(B)) \cap \css(B) = \emptyset$ for all $i \neq 0$. Bell and Rogalski require $\css(B)$ to be $\sigma$-lonely, as this is necessary for the simplicity of $B$, however we generally make no such requirement. 

In Section \ref{sec.ring} we establish several basic ring-theoretic properties of BR algebras, building on the work of Bell and Rogalski. We make explicit the connection between GWAs and BR algebras in the next result.

\begin{theorem*}[Theorem \ref{thm.gwa}]
Let $R$ be a commutative domain. As $\ZZ$-graded algebras, the rank one GWAs over $R$ are exactly the BR algebras $R(t, \sigma, H, J)$ in which $H$ and $J$ are nonzero principal ideals.
\end{theorem*}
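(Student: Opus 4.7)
The plan is to prove both inclusions by producing explicit $\ZZ$-graded $\kk$-algebra isomorphisms. For the direction that every principal-ideal BR algebra is a rank one GWA, suppose $B = R(t,\sigma,H,J)$ with $H = (b)$ and $J = (a)$ nonzero principal ideals. Set $c := \sigma^{-1}(ab)$ and consider the rank one GWA $A := R(\sigma, c)$. I would define a $\kk$-algebra map $\phi \colon A \to R[t,t^{-1};\sigma]$ by sending $r \in R$ to itself, $x \mapsto at$, and $y \mapsto \sigma^{-1}(b)\,t^{-1}$. A short calculation using the skew-Laurent rule $tr = \sigma(r)t$ together with the commutativity of $R$ shows that the four defining GWA relations $xr = \sigma(r)x$, $yr = \sigma^{-1}(r)y$, $yx = c$, and $xy = \sigma(c)$ are preserved, so $\phi$ is a well-defined graded homomorphism with image inside $B$.

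Next I would verify that $\phi$ is actually a bijection onto $B$. Products of principal ideals are principal, so for $n \geq 1$ one has $I^{(n)} = \bigl(\prod_{i=0}^{n-1}\sigma^i(a)\bigr)$ (and analogously for $n \leq -1$), whence each graded piece $I^{(n)} t^n$ of $B$ is free of rank one over $R$, and nonzero by the standing hypothesis together with the domain condition on $R$. The corresponding piece $R x^n$ (resp.\ $R y^{-n}$) of $A$ is also free of rank one over $R$, and $\phi$ sends the $R$-basis element $x^n$ (resp.\ $y^n$) to a generator of the degree $n$ (resp.\ $-n$) piece of $B$. Hence $\phi$ is an $R$-module isomorphism in each degree and so a graded $\kk$-algebra isomorphism $A \cong B$.

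For the reverse inclusion, given any rank one GWA $R(\sigma, c)$ — necessarily with $c \neq 0$, since otherwise the associated BR algebra would fail the standing hypothesis $I^{(n)} \neq 0$ — I would apply the construction above with $a = \sigma(c)$ and $b = 1$. Because $\sigma^{-1}(ab) = c$, this identifies $R(\sigma, c)$ with the BR algebra $R(t,\sigma,R,(\sigma(c)))$, whose defining ideals $R$ and $(\sigma(c))$ are both nonzero principal.

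There is no real conceptual obstacle here; the argument is essentially bookkeeping. The main care required is to track the $\sigma^{-1}$ twist relating the GWA parameter $c$ to the product $ab$ of the ideal generators, and to correctly align the grading convention of the GWA (positive powers of $x$ versus $y$) with that of the BR algebra (the role of $J$ in positive degree versus $H$ in negative degree).
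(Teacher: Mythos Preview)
Your proposal is correct and follows essentially the same approach as the paper: the same explicit graded map $\phi$ (with $x\mapsto at$, $y\mapsto\sigma^{-1}(b)t^{-1}$, and GWA parameter $\sigma^{-1}(ab)$) and the same reverse construction with $H=R$, $J=(\sigma(c))$. The only minor difference is that the paper verifies bijectivity by writing down an explicit inverse $\psi$ and checking relations, whereas you argue degree-by-degree that $\phi$ carries a free rank-one $R$-basis to a free rank-one $R$-basis; both arguments are straightforward and equivalent in spirit.
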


In addition, we explore how BR algebras behave with respect to some basic ring-theoretic constructions, such as passing to factor rings, invariants, and localization. 

In Section \ref{sec.weight} we study weight modules over BR algebras. The study of weight modules was an integral part of Bavula's initial study of GWAs \cite{B1}. Drozd, Guzner, and Ovsienko in \cite{DGO} fully classified simple and indecomposable weight modules for rank one GWAs with commutative base ring.

\begin{definition}\label{defn.wtmod}
Let $R$ be a commutative domain, and let $B$ be a BR algebra over $R$. We say that a left $B$-module $M$ is an $R$-\emph{weight module} (or just a \emph{weight module} when the choice of $R$ is clear) if
\[ M=\bigoplus_{\frm\in\Maxspec(R)}M_{\frm},\qquad M_{\frm}:=\{m\in M~|~\frm\cdot m=0\}, \qquad \dim_{R/\frm}(M_\frm)<\infty. \]
If $M$ is an $R$-weight module, we define its \emph{support} to be \[\Supp_R(M):=\{\frm\in\Maxspec(R)~|~M_{\frm}\neq 0\}.\] If $m\in M_{\frm}$ for some $\frm\in\Supp_R(M)$, we say that $m$ is a \emph{weight vector}. We denote by $(B,R)\wmod$ the full subcategory of (left) $B$-modules which are $R$-weight modules. 
\end{definition}

As is the case for GWAs, the structure of simple weight modules for BR algebras depends on whether the orbit of the $\sigma$-action on $\Maxspec(R)$, which the module is supported on, is finite or infinite. We make the following definition by analogy with the terminology used for (twisted) GWAs, which is essential in our classification results.

\begin{definition}\label{defn.breaks}
A maximal ideal $\frm\in\Maxspec(R)$ is called a \emph{break} for $B$ if $\sigma(\frm)\in \css(B)$.
\end{definition}

There is a $\ZZ$-action on $\Maxspec(R)$, defined by $n\cdot \frm=\sigma^n(\frm)$ for all $n\in\ZZ$, $\frm\in\Maxspec(R)$. For $\cO\in\Maxspec(R)/\ZZ$ we denote by $(B,R)\wmodO$ the full subcategory of modules $M\in (B,R)\wmod$ such that $\Supp_R(M)\subseteq \cO$. When $\cO$ is an infinite orbit, we give a complete classification of simple weight modules. Let $\beta\subseteq \cO$ be the set of breaks. If $\beta \neq \emptyset$, then set $\beta'=\beta$ unless $\beta$ contains a maximal element, in which case we set $\beta'=\beta \cup \{\infty\}$.

\begin{theorem*}[Theorem \ref{thm.inf-orb-class}]
Let $R$ be a commutative domain and let $B=R(\sigma,t,H,J)$ be a BR algebra. Let $\cO\in\Maxspec(R)/\ZZ$ be an infinite orbit and let $\beta \subseteq \cO$ be the set of breaks. If $\beta=\emptyset$, then up to isomorphism, there is a unique simple module in $(B,R)\wmodO$. If $\beta \neq \emptyset$, then the isomorphism classes of simple modules in $(B,R)\wmodO$ are parameterized by $\beta'$.
\end{theorem*}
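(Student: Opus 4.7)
The plan is to adapt the Drozd--Guzner--Ovsienko classification of simple weight modules over rank one GWAs, with extra care for the asymmetry between $H$ and $J$ in the BR setting. Let $M \in (B,R)\wmodO$ be simple and fix $0 \neq v \in M_{\frm_0}$ for some $\frm_0 \in \cO$. Simplicity gives $M = Bv$, and writing $\frm_i = \sigma^i(\frm_0)$ and using that $B_k v \subseteq M_{\frm_k}$, the infiniteness of $\cO$ forces the graded decomposition to refine the weight decomposition: $M = \bigoplus_k B_k v$ with $M_{\frm_k} = B_k v$. In particular $M_{\frm_0} = Rv = (R/\frm_0) v = \kk v$, so weight spaces of simples are at most one-dimensional. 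Because $B_1 B_k = B_{k+1}$ for $k \geq 0$ and $B_{-1} B_k = B_{k-1}$ for $k \leq 0$ (from the product formula defining $I^{(n)}$), a zero weight space in the middle of the support would propagate outward; hence $S := \{i : M_{\frm_i} \neq 0\}$ is an interval $[a,b] \subseteq \ZZ \cup \{\pm\infty\}$.

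Next I identify the admissible endpoints. If $b$ is finite, then $M_{\frm_{b+1}} = 0$ forces $(jt) v = 0$ for every $j \in J$ and $v \in M_{\frm_b}$. Applying $h' t^{-1}$ and using the commutation $(h' t^{-1})(jt) = h' \sigma^{-1}(j) \in \sigma^{-1}(HJ)$ yields $\sigma^{-1}(HJ) \cdot v = 0$, whence $HJ \subseteq \frm_{b+1}$; primality of $\frm_{b+1}$ gives $\sigma(\frm_b) \in \css(B)$, i.e., $b \in \beta$. A symmetric calculation at the left endpoint, using $(jt)(h' t^{-1}) = j\sigma(h') \in JH$, yields $a - 1 \in \beta$ or $a = -\infty$. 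Cyclicity of $M$ from any weight vector also forces $[a,b-1] \cap \beta = \emptyset$; so given $b$, the left endpoint is the unique $a$ with $a - 1$ equal to the largest element of $\beta$ strictly below $b$ (or $a = -\infty$ if no such element exists). Two simples with the same support $[a,b]$ are isomorphic because one-dimensional weight spaces reduce the module structure to scalar forward and backward arrows constrained by the algebra relations only up to a weight-wise gauge. So the isomorphism classes in $(B,R)\wmodO$ are parameterized by the admissible values of $b$: namely $\beta$ always, plus $\infty$ precisely when $\beta$ has a maximum---that is, $\beta'$. When $\beta = \emptyset$ the only admissible support is $\cO$ itself, giving the stated unique simple module.

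For existence, for each $b \in \beta'$ with associated $a$, construct $M(b)$ as follows: form the cyclic $B$-module on $v_b$ with $\Ann(v_b) = \frm_b$, impose $(Jt) v_b = 0$ (the consistency condition $HJ \subseteq \frm_{b+1}$ is exactly $b \in \beta$), and if $a$ is finite, also quotient by the submodule generated by $\sigma^{-1}(H) t^{-1} v_a$ at a bottom-weight vector $v_a$. The absence of breaks in $[a,b-1]$ keeps each $M(b)_{\frm_i}$ nonzero for $i \in [a,b]$, and transitivity of the $B$-action on these one-dimensional spaces forces simplicity. The main technical hurdle is the endpoint identification: in the rank one GWA case $H = J$ makes the forward and backward arrow failures coincide at each break, but in the BR setting the correct condition $b \in \beta$ captures both failure modes and only emerges from combining the commutations $(h' t^{-1})(jt)$ and $(jt)(h' t^{-1})$ with primality of the maximal ideals.
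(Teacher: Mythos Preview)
Your overall architecture matches the paper's: reduce to one-dimensional weight spaces via cyclicity, show the support is an interval, pin down the endpoints by the commutations $(\sigma^{-1}(h)t^{-1})(jt)=\sigma^{-1}(hj)$ and $(jt)(\sigma^{-1}(h)t^{-1})=jh$, and then argue uniqueness by a choice of weight bases. The paper organizes this into Proposition~3.3 (one-dimensionality), Corollary~3.2 (endpoint $\Rightarrow$ break), Lemma~3.5 (break $\Rightarrow$ arrow vanishes in a simple), and then an explicit normalization $v_{\sigma(\frm)}:=r_0j_0t\cdot v_\frm$ to realize the ``gauge'' you invoke.

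There is, however, a genuine gap at the step ``cyclicity of $M$ from any weight vector also forces $[a,b-1]\cap\beta=\emptyset$''. What you proved is the implication \emph{vanishing arrow $\Rightarrow$ break} (if $B_1 v=0$ then $HJ\subseteq\sigma(\frm)$). The interior-break exclusion needs the reverse implication \emph{break $\Rightarrow$ some arrow vanishes}, and this does not follow from cyclicity alone: a priori one could have $c\in\beta$ with, say, $H\subseteq\sigma(\frm_c)$ but $J\not\subseteq\sigma(\frm_c)$, so $jt\cdot v\neq 0$ for $j\in J\setminus\sigma(\frm_c)$. The missing lemma (the paper's Lemma~3.5) is: in a \emph{simple} weight module, if $j\in J\cap\sigma(\frm)$ then $jt\cdot v=0$, and dually for $H$. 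Its proof is short but uses simplicity in an essential way: if $w_1:=jt\cdot v\neq 0$, then $\sigma^{-1}(h)t^{-1}\cdot w_1=\sigma^{-1}(hj)v=0$ for every $h\in H$ (since $\sigma^{-1}(j)\in\frm$), so $(Bw_1)_\frm=B_{-1}w_1=0$, whence $v\notin Bw_1$, contradicting simplicity. With this in hand your argument goes through; without it the interior-break exclusion and hence the parameterization by $\beta'$ is unjustified. (Two minor points: $M_{\frm_0}=(R/\frm_0)v$, not $\kk v$; and your cyclic-module construction of $M(b)$ would be cleaner if replaced by the paper's direct definition $\bigoplus (R/\frm)v_\frm$ with the explicit action, since $B$ is given as a subalgebra rather than by generators and relations.)
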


Our second main result of Section \ref{sec.weight} gives a classification of simple weight modules in the case of finite orbits, under some additional assumption (see Hypothesis~\ref{hyp.finorb}). 

\begin{theorem*}[Theorem \ref{thm.fin-orb-class}]
Let $R$ be a commutative domain and let $B=R(\sigma,t,H,J)$ be a BR algebra. Assume that $\cO$ satisfies Hypothesis~\ref{hyp.finorb}.
\begin{enumerate}
\item If there are no breaks for $B$ on $\cO$, then the simple weight modules are parameterized by triples $(\frm,N,\theta)$ where $\frm \in \cO$, $N$ is a vector space over $R/\frm$, and $\theta\in \Aut_{R/\frm}(N)$ is an invertible linear transformation that leaves no nontrivial subspace invariant.
\item Assume there are breaks for $B$ on $\cO$. Then each simple module belongs to one of three families as described in Section \ref{sec.finorb}.
\end{enumerate}
\end{theorem*}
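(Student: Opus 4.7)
The general approach is to adapt the Drozd--Guzner--Ovsienko classification for rank one GWAs to the Bell--Rogalski setting, exploiting the dichotomy between the ``no breaks'' (cyclic) and ``breaks'' (decomposed) cases. The common toolkit is the observation that on any weight module supported on $\cO$, the degree $+1$ part of $B$ acts as a \emph{raising} operator $Jt: M_\frm\to M_{\sigma(\frm)}$ and the degree $-1$ part as a \emph{lowering} operator $\sigma^{-1}(H)t^{-1}: M_{\sigma(\frm)}\to M_\frm$. The break condition is tailored precisely so that one of these maps is forced to be zero: if $\sigma(\frm)\supseteq J$ then every image of the raising map lies in $J\cdot M_{\sigma(\frm)}=0$, and dually for $H$-breaks. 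A direct computation identifies the composition lowering$\circ$raising (and its reverse) with the action of a specific element of $HJ\subset R$ on $M_\frm$ modulo $\frm$, providing the key bridge between the two operators.

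For part (1), the absence of breaks on $\cO=\{\sigma^i(\frm_0)\}_{i=0}^{n-1}$ implies that the scalar from the above composition is a unit on each $M_\frm$. Combined with $\dim_{R/\frm}M_\frm<\infty$, this forces raising and lowering to be mutually inverse $\kk$-linear isomorphisms up to units, so $\Supp_R(M)=\cO$ and all weight spaces are canonically identified. Any element of $I^{(n)}t^n$ whose leading coefficient is a unit modulo $\frm_0$ then induces a single $R/\frm_0$-linear automorphism $\theta$ of $N:=M_{\frm_0}$, well-defined up to a unit scalar. I would show that $B$-submodules of $M$ correspond bijectively to $\theta$-invariant $R/\frm_0$-subspaces of $N$, so $M$ is simple if and only if $\theta$ leaves no nontrivial subspace invariant. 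For the converse, given any such triple $(\frm,N,\theta)$, I construct a simple weight module by spreading $N$ around $\cO$ via prescribed raising isomorphisms and declaring $\theta$ to be the return map; the defining relations of $B$ are verified directly. Cycling the base weight around $\cO$ accounts for the redundancy in the parametrization.

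For part (2), one begins by restricting the possible supports: if $\frm,\sigma(\frm)\in\Supp_R(M)$ then at least one of raising or lowering between them must be nonzero, otherwise weight vectors on one side would generate a proper submodule. Hence the support of a simple $M$ is a contiguous arc in $\cO$ bounded by two-sided walls. Because $H$-breaks and $J$-breaks in a BR algebra may occur at different positions, several types of wall configurations arise, and the three families of Section~\ref{sec.finorb} should correspond to: (a) modules supported on a proper arc bounded by two-sided walls, parametrized by an analog of $(\frm,N,\theta)$ in which $\theta$ comes from a truncated loop element; (b) modules supported on the entire $\cO$ that cross breaks which are only one-sided in a compatible direction around the cycle, still classified by a twisted loop automorphism; and (c) a remaining ``mixed'' family accommodating any other arc configuration. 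In each case the parametrizing data should reduce to a weight, a vector space, and an endomorphism suited to the wall structure.

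The main obstacle is part (2). In the rank one GWA case (Theorem~\ref{thm.gwa}), both $H$ and $J$ arise from a single element and its $\sigma$-translates, so every break is simultaneously of both types; for general BR algebras the two kinds of walls are genuinely independent, producing configurations with no GWA analog around a finite orbit. Hypothesis~\ref{hyp.finorb} presumably imposes just enough compatibility on the $H$- and $J$-breaks to keep the classification tractable, but verifying exhaustiveness---that no simple module escapes the listed families, and that the parametrizing data within each family are non-redundant---is the delicate closing step.
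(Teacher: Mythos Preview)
Your treatment of part (1) is essentially the paper's argument: extract the return map $\theta$ from the action of $I^{(n)}t^n$ on $N=M_{\frm}$, identify $B$-submodules with $\theta$-invariant subspaces, and build $M(\frm,N,\theta)$ in the other direction. No issues there.

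Part (2), however, rests on a misconception that would derail the proof. In the paper, ``$J$-break'' and ``$H$-break'' are not intrinsic features of a position $\frm\in\cO$; they are defined \emph{relative to a given module} $M$ (Definition~\ref{def.HJbreaks}): $\frm$ is a $J$-break for $M$ if some $jt$ acts nonzero on $M_\frm$, and an $H$-break if some $\sigma^{-1}(h)t^{-1}$ acts nonzero on $M_{\sigma(\frm)}$. The key Lemma~\ref{lem.HJbreaks} then shows that for a simple $M$ a break cannot be simultaneously of both types and, moreover, that once one break is a $J$-break for $M$ (resp.\ $H$-break), \emph{all} breaks are of that same type and $\Supp(M)=\cO$. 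This trichotomy is what produces the three families: (a) the explicit modules $M(\cO,i_k)$ supported on the arc between consecutive breaks, with \emph{one-dimensional} weight spaces and no $(\frm,N,\theta)$ data whatsoever; (b) full support with all $J$-breaks; (c) full support with all $H$-breaks.

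Your proposed families do not match this. Your family (a) carries a ``truncated loop'' automorphism $\theta$, but on a proper arc there is no loop at all: $B_{\geq 1}$ annihilates the top and $B_{\leq -1}$ the bottom, so the module is uniquely determined by the arc, exactly as in the infinite-orbit case. Your family (c) is a nonexistent ``mixed'' catch-all, while the paper's (c) is the $H$-break mirror of (b). Finally, you anticipate classifying (b) and (c) by a loop automorphism, but the paper deliberately does \emph{not} do this: Example~\ref{ex.finorb-mod} exhibits simple modules in these families with higher-dimensional weight spaces governed by a \emph{pair} of matrices with no common invariant subspace, and the paper concludes that a complete description here is hard. Hypothesis~\ref{hyp.finorb} plays no role in constraining break configurations; it only asserts $\sigma^n=\id_R$ and $|\cO|=n$, which is what makes the Veronese $\Lambda_n$ and the twist $N\mapsto{}^{\sigma^i}N$ behave well.
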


As BR algebras were defined very recently, there are many open questions about their structure and their representation theory. In addition to those asked in \cite{BR}, throughout this paper, we pose several open questions which arise naturally from our constructions which would make for interesting future directions of research.

\subsection*{Acknowledgments} 
R. Won was partially supported by an AMS--Simons Travel Grant and Simons Foundation grant \#961085.

\section{Ring-theoretic properties of BR algebras}
\label{sec.ring}

An algebra $A$ is \emph{$\ZZ$-graded} if there is a $\kk$-vector space decomposition $A=\bigoplus_{k \in \ZZ} A_k$ such that $A_k \cdot A_\ell \subseteq A_{k+\ell}$ for all $k,\ell \in \ZZ$. For $k \in \ZZ$, an element $a \in A_k$ is called \emph{homogeneous}. For a $\ZZ$-graded Ore domain $A$, we write $Q_{\mathrm{gr}}(A)$ for the graded quotient ring of $A$, the localization of $A$ at all nonzero homogeneous elements.  Throughout this paper, all $A$-modules will be left $A$-modules. An $A$-module $M$ is called \emph{$\ZZ$-graded} if there is a $\kk$-vector space decomposition $M = \bigoplus_{k \in \ZZ} M_k$ such that $A_m \cdot M_k  \subseteq M_{m+k}$ for all $m, k \in \ZZ$. A (left/right/two-sided) ideal of $A$ is \emph{$\ZZ$-graded} if it is generated by homogeneous elements. Throughout this paper, when unspecified, the term \emph{graded} will mean $\ZZ$-graded.

There are three primary goals of this section. First, we wish to collect several ring-theoretic properties of BR algebras. Many of these were established by Bell and Rogalski \cite{BR}. Others can be shown in an analogous way to GWAs. Secondly, we explain the connection between GWAs and BR algebras. Finally, we study how BR algebras behave under several constructions.

An automorphism $\sigma$ of a ring $R$ is \emph{inner} if there exists $a \in R$ such that $ra=a\sigma(r)$ for all $r \in R$. We denote the center of a ring $R$ by $\cnt(R)$.

\begin{lemma}\label{lem.props}
Let $R$ be an algebra and let $B=R(t,\sigma,H,J)$.
\begin{enumerate}
\item \label{gprops1} $B$ is a $\ZZ$-graded subalgebra of $R[t,t\inv;\sigma]$.
\item \label{gprops2} $B$ is generated as an algebra by $R$, $Jt$, and $\sigma\inv(H)t\inv$.
\item \label{gprops3} $B$ is a domain if and only if $R$ is a domain.
\item \label{gprops4} Suppose $R$ is a domain and no nontrivial power of $\sigma$ is inner.
    \begin{enumerate}
        \item If $|\sigma|=n<\infty$, then $\cnt(B)$ is generated by $\cnt(R)^{\grp{\sigma}}$ and $(I^{(\pm n)} \cap \cnt(R))t^{\pm n}$.
        
        \item If $\sigma$ has infinite order, then $\cnt(B) = \cnt(R)^{\grp{\sigma}}$.
    \end{enumerate}
\end{enumerate}
\end{lemma}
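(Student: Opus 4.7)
The plan is to handle parts (1)--(3) by direct manipulation inside the ambient skew Laurent ring, and then to reduce part (4) to a commutation analysis against the generating set furnished by part (2).

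For (1), the $\ZZ$-graded vector-space decomposition is built into the definition of $B$, so I only need to check closure under the multiplication inherited from $R[t,t\inv;\sigma]$. Since $(at^m)(bt^n) = a\sigma^m(b)t^{m+n}$ there, the task is to verify $I^{(m)}\sigma^m(I^{(n)}) \subseteq I^{(m+n)}$, which I would do by splitting into the four cases given by the signs of $m, n$ and of $m+n$. When $m$ and $n$ have the same sign the inclusion is immediate from the definition; when their signs disagree, one telescopes products of the form $\sigma^j(J)\sigma^j(H) \subseteq R$ (using that $H, J$ are two-sided ideals of $R$) to absorb the overlap. For (2), a direct induction gives $(Jt)^k = I^{(k)} t^k$ and $(\sigma\inv(H)t\inv)^k = I^{(-k)} t^{-k}$ for $k \geq 1$, and $R = I^{(0)}$ covers degree zero. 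For (3), one direction is that $R \subseteq B$ inherits being a domain; conversely, $R$ a domain implies $R[t;\sigma]$ is a domain by the standard skew-polynomial argument, hence so is its localization $R[t,t\inv;\sigma]$, into which $B$ embeds by (1).

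For (4), write a central element as $c = \sum_i c_i t^i$ with $c_i \in I^{(i)}$ and impose that $c$ commutes with each generator from part (2). Commuting with $r \in R$ yields $r c_i = c_i \sigma^i(r)$ for all $r$ and $i$, and commuting with $at$ for $a \in J$ yields $c_i \sigma^i(a) = a\sigma(c_i)$. Setting $r = a$ in the first relation and combining with the second gives $a\sigma(c_i) = c_i \sigma^i(a) = a c_i$, so $a(\sigma(c_i) - c_i) = 0$; since $R$ is a domain and $J \neq 0$, this forces $\sigma(c_i) = c_i$ for every $i$. (Commutation with $\sigma\inv(H)t\inv$ yields the symmetric identity and adds no new information.) The relation $rc_i = c_i\sigma^i(r)$ now states that any nonzero $c_i$ implements $\sigma^i$ as an inner automorphism of $R$, so by hypothesis $c_i = 0$ whenever $\sigma^i \neq \id$. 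In the infinite-order case this leaves $c = c_0 \in \cnt(R)^{\grp{\sigma}}$, establishing (b). In the finite-order case $|\sigma| = n$, only the pieces with $n \mid i$ survive, giving $c \in \bigoplus_k(I^{(kn)} \cap \cnt(R)^{\grp{\sigma}})t^{kn}$.

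The remaining step, which I expect to be the main obstacle, is promoting this characterization of the graded pieces to the generation statement in (a). The identity $I^{(kn)} = (I^{(n)})^k$, which follows from $\sigma^n = \id$, makes plausible the claim that each element of $I^{(kn)} \cap \cnt(R)^{\grp{\sigma}}$ can be written as a sum of $k$-fold products of the proposed degree-$\pm n$ generators scaled by $\cnt(R)^{\grp{\sigma}}$; but intersection with $\cnt(R)$ does not distribute over products of ideals in general, so this decomposition requires a careful factorization argument rather than a formal manipulation.
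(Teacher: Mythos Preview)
Your approach matches the paper's throughout. Parts (1)--(3) are argued the same way, and for (4) both you and the paper reduce to homogeneous central elements and analyze commutation with the generators from part (2); you are in fact slightly more careful, since you also commute the nonzero-degree pieces with $Jt$ and thereby extract $\sigma(c_i)=c_i$ for every $i$, whereas the paper for degree $k\neq 0$ commutes only with $R$ and records $a\in\cnt(R)$.

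The ``main obstacle'' you flag in your final paragraph is not addressed by the paper either: its proof stops after characterizing the homogeneous central elements and simply asserts the result, with no factorization into degree-$\pm n$ pieces carried out. Indeed, as your own calculation shows, an element $at^{n}$ with $a\in I^{(n)}\cap\cnt(R)$ is central only when additionally $\sigma(a)=a$, so the stated generating set already contains non-central elements and the phrase ``generated by'' in part~(a) is best read as a loose description of the graded components rather than a literal generation claim. At that level, your identification $\cnt(B)=\bigoplus_{k}\bigl(I^{(kn)}\cap\cnt(R)^{\grp{\sigma}}\bigr)t^{kn}$ is the sharp statement, and you have already proved everything the paper's proof establishes.
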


\begin{proof}
\eqref{gprops1} Set $B_n = I^{(n)}t^n$ and $B_m = I^{(m)}t^m$, $n,m \in \ZZ$. Then
\[ B_n B_m = I^{(n)}t^nI^{(m)}t^m = I^{(n)}\sigma^n(I^{(m)}) t^{n+m}.\]
An easy computation, though with several cases to consider, shows that $I^{(n)}\sigma^n(I^{(m)}) \subseteq I^{(n+m)}$. The $\ZZ$-grading is then clear by setting $\deg(R)=0$ and $\deg(t^{\pm 1})=\pm 1$.

\eqref{gprops2} This is clear.

\eqref{gprops3} On one hand, if $B$ is a domain then clearly so is its subalgebra $B_0=R$. On the other hand, if $R$ is a domain, then so is $R[t,t\inv;\sigma]$ and hence so is the subalgebra $B$.

\eqref{gprops4} Since $B$ is $\ZZ$-graded, then so is the center $\cnt=\cnt(B)$. Let $z \in \cnt$ be homogeneous. First suppose $\deg(z)=0$ so $z \in \cnt(R)$. Since $\sigma$ preserves $\cnt$, then $\sigma(z) \in \cnt$. Thus, for $at \in Jt$ we have $z(at) = (at)z = \sigma(z)(at)$. As $B$ is a domain, then $z=\sigma(z)$. Hence, $z \in \cnt(R)^{\grp{\sigma}}$.

Now suppose $\deg(z)=k \neq 0$. Then $z=at^k$ with $a \in I^{(k)}$. Let $r \in R$. Then $r(at^k)=(at^k)r = a\sigma^k(r)t^k$. As $B$ is a domain, then $ra=a\sigma^k(r)$. Since this holds for all $r \in R$, then this implies that $\sigma^k$ is an inner automorphism. If $|\sigma|=\infty$, this is a contradiction and the result follows. If $|\sigma|=n<\infty$, then $n \mid k$ and we have $ra=ar$ so $a \in \cnt(R)$.
\end{proof}

The result in Lemma \ref{lem.props}\eqref{gprops4} recovers a result of Kulkarni in the case that $B$ is a GWA  (see Definition \ref{defn.GWA}) and $R$ is a commutative domain \cite[Corollary 2.0.2]{kulk}. In that setting, the condition that no nontrivial power of $\sigma$ is inner is superfluous. When $R=\kk[t]$ and $\sigma$ has finite order, the GWA is finitely-generated as a module over its center. This prompts the next question.

\begin{question}
Under what conditions is $B$ module-finite over its center (and hence PI)?
\end{question}

We now restrict to the case that $R$ is a commutative noetherian domain. For $\sigma \in \Aut(R)$, we say $R$ is $\sigma$-simple if the only ideals with $\sigma(I)=I$ are $0$ and $R$. The automorphism $\sigma$ is called \emph{locally algebraic} if for any $r \in R$, the set $\{\sigma^n(r) : n \in \ZZ\}$ is contained in a finite-dimensional $\kk$-subspace of $R$.

For the next result, recall the definition of $\css(B)$ from equation \eqref{eq.SB}.

\begin{proposition}\label{prop.props}
Let $R$ be a commutative noetherian domain and let $B=R(t,\sigma,H,J)$.
\begin{enumerate}
    \item \label{props2} $B$ is simple if and only if  $R$ is $\sigma$-simple and $\css(B)$ is $\sigma$-lonely.
    \item \label{props3} Suppose that $\sigma^n(\bp) \neq \bp$ for any $\bp \in \Maxspec(R)$
    and any $n \neq 0$. Then every ideal of $B$ is graded.
    \item \label{props4} Suppose $\kk$ is algebraically closed, let $K$ be the field of fractions of $R$, and set $\trdeg(K/\kk) = d$. If $\sigma$ is locally algebraic, then $\GKdim B = d + 1$.
\end{enumerate}
\end{proposition}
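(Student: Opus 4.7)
Part (1) is essentially a result of Bell and Rogalski~\cite{BR}; although their standing hypothesis that $\css(B)$ is $\sigma$-lonely is built into their setup, the argument establishes the equivalence in this form. For necessity of $\sigma$-simplicity: a proper nonzero $\sigma$-invariant ideal $L$ of $R$ yields the proper graded ideal $\bigoplus_{n\in\ZZ}(L\cap I^{(n)})t^n$ of $B$. For necessity of $\sigma$-loneliness: if $\bp,\sigma^i(\bp)\in\css(B)$ for some $i\ne 0$, one constructs a proper graded ideal from the prime $\bp$ and its images. For sufficiency, one shows that any nonzero ideal of $B$ meets $R$ nontrivially (using that the combined hypotheses force $\sigma$ to act without periodic points on $\Maxspec(R)$ off of $\css(B)$, together with $\sigma$-simplicity to rule out the remaining possibilities), and then $\sigma$-simplicity of $R$ finishes the proof.

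For part (2), the plan is a commutator-reduction argument in the spirit of Jordan's treatment of skew Laurent rings. Let $\frI\subseteq B$ be a nonzero ideal, and let $\frI_\flat\subseteq\frI$ be the graded subideal generated by those homogeneous components of elements of $\frI$ that happen to lie in $\frI$. Suppose for contradiction that $\frI\ne\frI_\flat$, and pick $x=\sum_{i\in S}a_i t^i\in\frI\setminus\frI_\flat$ with $|S|$ minimal; then $|S|\ge 2$ since support-one elements are homogeneous. Fix distinct $i_0,i_1\in S$. Since $R$ is commutative, a direct computation in the ambient skew Laurent algebra gives
\[
rx-xr=\sum_{i\in S}(r-\sigma^i(r))\,a_i t^i\in\frI,
\]
and hence
\[
y:=(r-\sigma^{i_0}(r))\,x-(rx-xr)=\sum_{i\in S\setminus\{i_0\}}(\sigma^i(r)-\sigma^{i_0}(r))\,a_i t^i\in\frI.
\]
By the hypothesis, $\sigma^{i_1-i_0}$ has no fixed maximal ideal and is in particular not the identity on $R$, so we can choose $r\in R$ with $\sigma^{i_1}(r)\ne\sigma^{i_0}(r)$. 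Since $R$ is a domain and $a_{i_1}\ne 0$, the $i_1$-component of $y$ is nonzero; thus $y$ is a nonzero element of $\frI$ with support strictly inside $S$. One checks that $y\notin\frI_\flat$ (else the relevant homogeneous components of $x$ would also have been), contradicting minimality. Hence $\frI=\frI_\flat$ is graded.

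For part (3), the plan is to bound $\GKdim B$ from above using the ambient skew Laurent ring and from below using a concrete affine subalgebra. Since $B\subseteq R[t,t^{-1};\sigma]$ and $\sigma$ is locally algebraic, the standard formula $\GKdim R[t,t^{-1};\sigma]=\GKdim R+1$ (see Krause--Lenagan) combined with $\GKdim R=d$ for an affine commutative $\kk$-domain of transcendence degree $d$ yields $\GKdim B\le d+1$. For the lower bound, fix nonzero $a\in J$ and $b\in\sigma^{-1}(H)$ so that $at,\,bt^{-1}\in B$. Using local algebraicity of $\sigma$, enlarge a finitely generated $\kk$-subalgebra $R_0\subseteq R$ with $\GKdim R_0=d$ to a $\sigma$-stable affine subalgebra containing $a$ and $b$; the subalgebra $R_0\langle at,\,bt^{-1}\rangle\subseteq B$ then admits a spanning set whose growth is polynomial of degree $d+1$, so $\GKdim B\ge d+1$. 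The main obstacle is in (2), where the specific choice of $r$ ensuring that $y$ both decreases support and remains outside $\frI_\flat$ relies essentially on the freeness of the $\sigma$-action on $\Maxspec(R)$; part (3) is more routine, the only subtlety being the reduction to affine subrings when $R$ is not itself finitely generated.
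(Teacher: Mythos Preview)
Your overall strategies for all three parts match the paper's: part (1) defers sufficiency to \cite{BR} and handles necessity by producing proper ideals; part (2) is a commutator-reduction argument in the style of \cite{bavsimp}; part (3) bounds from above via the ambient skew Laurent ring and \cite{LMO}, and from below by an affine subalgebra. Parts (1) and (3) are fine as sketches, though for (1) the paper's proof of necessity of $\sigma$-loneliness is more concrete than yours: rather than vaguely ``constructing a proper graded ideal from $\bp$ and its images,'' it builds an explicit finite-length graded $B$-module supported on $\bp,\sigma(\bp),\dots,\sigma^{k-1}(\bp)$ and takes its annihilator.

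There is, however, a genuine gap in your argument for (2). Your minimality setup forces $y\in\frI_\flat$, not $y\notin\frI_\flat$: any nonzero element of $\frI$ with support strictly smaller than $|S|$ must lie in $\frI_\flat$ by the choice of $x$. So what you actually obtain is that each homogeneous component $(\sigma^i(r)-\sigma^{i_0}(r))\,a_i t^i$ lies in $\frI$. Your parenthetical claims this forces $a_i t^i\in\frI$, but for a single $r$ this is false in general: the coefficient $\sigma^i(r)-\sigma^{i_0}(r)$ is merely a nonzero element of the domain $R$, not a unit, so you cannot cancel it. The hypothesis you invoked, that $\sigma^{i_1-i_0}\neq\id_R$, is too weak here.

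The missing ingredient is exactly where the full strength of the hypothesis on $\Maxspec(R)$ enters. Because $\sigma^{i-i_0}$ fixes no maximal ideal, one has (as in \cite[Lemma~5]{bavsimp}) that $\sum_{u\in R} R\,(u-\sigma^{i-i_0}(u))=R$, equivalently $\sum_{r\in R} R\,(\sigma^i(r)-\sigma^{i_0}(r))=R$. Only by varying $r$ and taking $R$-linear combinations can you extract $a_i t^i\in\frI$ for each $i\neq i_0$, and then $a_{i_0}t^{i_0}=x-\sum_{i\neq i_0}a_i t^i\in\frI$ as well, giving $x\in\frI_\flat$. The paper streamlines this by using the identity $1=\sum_k \alpha_k(\sigma^n(\beta_k)-\sigma^m(\beta_k))$ up front to directly peel off the bottom component $v_n$ via $u=\sum_k\alpha_k(v\beta_k-\sigma^m(\beta_k)v)$, avoiding the $\frI_\flat$ bookkeeping entirely.
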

\begin{proof}
\eqref{props2} Sufficiency follows from \cite[Proposition 2.18 (3)]{BR}. We will prove necessity.

Suppose that $R$ contains a $\sigma$-invariant ideal $I \neq 0,R$. Then $BI=IB$ is a proper, nontrivial ideal of $B$. Hence, $B$ is not simple. 

Now suppose that $\css(B)$ is not $\sigma$-lonely. Then there exists $k \neq 0$ such that $\sigma^k(\css(B)) \cap \css(B) \neq \emptyset$. That is, there is some $\bp,\bq\in \css(B)$ such that $\sigma^k(\bp)=\bq$. Define an $R$-module with generating set $v_0,\ldots,v_{k-1}$ as follows:
\[M=\bigoplus_{i=0}^{k-1}\left(R/\sigma^i(\bp)\right)v_{i}.\]
We define a $B$-action on $M$ by setting
\[
jt\cdot v_i=
\begin{cases} j v_{i+1} & \text{ if }0\leq i<k-1 \\ 0 & \text{ if }i=k-1, 
\end{cases}\qquad 
\sigma\inv(h)t\inv\cdot v_i=
\begin{cases} 
\sigma\inv(h) v_{i-1} & \text{ if }0< i\leq k-1 \\ 
0 & \text{ if }i=0,
\end{cases}\]
for all $j \in J$ and $h \in H$.
We verify that this action is well defined.

If $r\in\sigma^i(\bp)$, then $r\cdot v_i=0$ and for all $j\in J$ we have
\[ (jt)r\cdot v_i
=\sigma(r)jt\cdot v_i
=\sigma(r)jv_{i+1}
=j\sigma(r)v_{i+1}=0\]
because $\sigma(r)\in\sigma^{i+1}(\bp)$. Similarly, for all $h\in H$ we have
\[ (\sigma\inv(h)t\inv)r\cdot v_i
=\sigma\inv(r)\sigma\inv(h)t\inv\cdot v_i
=\sigma\inv(r)\sigma\inv(h)v_{i-1}
=\sigma\inv(h)\sigma\inv(r)v_{i-1}=0\]
because $\sigma\inv(r)\in\sigma^{i-1}(\bp)$.

Let $j\in J$ and $h\in H$. Then we have $\sigma\inv(h)t\inv jt=\sigma\inv(hj)\in R$ and if $0\leq i<k-1$,
\[\sigma\inv(h)t\inv jt\cdot v_i=\sigma\inv(h)t\inv j v_{i+1}=\sigma\inv(j)\sigma\inv(h)t\inv\cdot v_{i+1}=\sigma\inv(j)\sigma\inv(h)v_i=\sigma\inv(hj)v_i. \]
If $i=k-1$, we get that 
\[0=\sigma\inv(h)t\inv\cdot 0=\sigma\inv(h)t\inv (jt\cdot v_{k-1})=\sigma\inv(hj)\cdot v_{k-1} \]
which is satisfied because $\bq=\sigma^k(\bp)\in\css(B)$, hence $\sigma\inv(hj)\in\sigma\inv(HJ)\subseteq \sigma\inv(\bq)=\sigma^{k-1}(\bp)$. Analogously, for all $j\in J$, $h\in H$ we have $ jt\sigma\inv(h)t\inv=jh=hj\in R$ and if $0< i\leq k-1$,
\[jt\sigma\inv(h)t\inv \cdot v_i=jt \sigma\inv(h) v_{i-1}=hj t \cdot v_{i-1}=hj v_i. \]
If $i=0$, then
\[0=jt\cdot 0=jt(\sigma\inv(h)t\inv\cdot v_{0})=jh\cdot v_{0} \]
which is satisfied because $\bp\in\css(B)$, hence $jh=hj\in HJ\subseteq \bp$.

Notice that $M$ is a graded module, by letting $M_i=(R/\sigma^i(\bp))v_i$, $i=0,\ldots,k-1$. Then $\Ann_B(M)\subseteq B$ is a nontrivial two sided graded ideal of $B$ since $1\not\in\Ann_B(M)$, but $B_{\geq k},B_{\leq -k}\subseteq \Ann_B(M)$. This implies that $B$ is not simple.

\eqref{props3} We follow \cite{bavsimp}. In particular, \cite[Lemma 5]{bavsimp} implies immediately that for $n \in \ZZ$, $n \neq 0$,
\begin{align}
\label{eq.Dsum}
\sum_{u \in R} R(u-\sigma^n(u)) = R.
\end{align}

Suppose $M$ is a $R$-subbimodule of $B$ and let $v \in M$. Write $v=v_n + \cdots + v_m$ where $v_i \in B_i$, $n \leq m$. 
We claim $v_i \in M$ for all $i$. Let $\ell(v)=m-n$ denote the length of $v$. If $\ell(v)=0$, then there is nothing to prove, so assume the result is true for all elements in $M$ with length less than $k>0$. By \eqref{eq.Dsum}, there exist elements $\alpha_i, \beta_i \in R$ such that 
\[ \sum \alpha_i (\sigma^n(\beta_i)-\sigma^m(\beta_i)) = 1.\]
Set
\[ u := \sum \alpha_i(v\beta_i - \sigma^m(\beta_i)v).\]
Then $u \in M$ and write $u=u_n + \cdots + u_m$ where $u_n=v_n$ and $u_m=0$. Thus, $\ell(u) < \ell(v)$ so by the induction hypothesis, $v_n \in M$. Hence, $v_i \in M$ for all $i$.

\eqref{props4}
As noted in \cite{BR}, it is easy to show $\GKdim B \geq d + 1$. Let $V \subseteq K$ be the span of a transcendence basis for $K/\kk$ and let $a \in B$ be a nonzero element of positive degree. Then consider the subalgebra $\kk\langle V + \kk a\rangle$.

On the other hand, if $\trdeg(K/\kk) = d$ then $R$ has Krull dimension $d$ and, as $R$ is commutative, $\GKdim R = d$. Since $\sigma$ is locally algebraic, then $\GKdim R[t, t\inv; \sigma] = d + 1$ \cite[Proposition 1]{LMO}. As $B$ is a subalgebra of $R[t, t\inv; \sigma]$, then $\GKdim B \leq d+1$ and the result follows.
\end{proof}

A collection of distinct closed subsets $\{W_\alpha\}$ of a variety $\Spec R$ is \emph{critically dense} if for any proper closed subset $C \subseteq \Spec R$, one has $C \cap W_\alpha = \emptyset$ for all but finitely many $\alpha$. Bell and Rogalski proved that if $R$ is a commutative noetherian domain and $\{\sigma^i(\css(B)) : i \in \ZZ \}$ is critically dense in $\Spec R$, then $B=R(t,\sigma,H,J)$ is noetherian. This is related to \cite[Prop 3.10]{KRS}.

However, the critical density condition is not necessary for $B$ to be noetherian, as we now show. Consider $R=\CC[z_1,z_2]$, $\sigma(z_i)=z_i-1$, $J=(z_1)$, $H=R$. Since $R$ is commutative and $J, H$ are principal, then by Theorem \ref{thm.gwa} below, $B$ is a generalized Weyl algebra over a noetherian base ring, hence it is noetherian \cite[Proposition 1.3]{B1}.
(Notice that unlike the setting of Bell and Rogalski's proof, here $B$ is not simple because $(z_1-z_2)$ is a $\sigma$-invariant ideal, so $R$ is not $\sigma$-simple). We have 
\[ \css(B) = \{\bp\in\Spec(R)~|~\bp\supset HJ\}=\{\bp\in\Spec(\CC[z_1,z_2])~|~\bp\supset (z_1)\}=\{(z_1,z_2-\alpha)~|~\alpha\in\CC\}\cup\{(z_1)\}\]
therefore, for all $i\in\ZZ$, $\sigma^i(\css(B))=\{(z_1-i,z_2-\alpha)~|~\alpha\in\CC\}\cup\{(z_1-i)\}$. Now take $C=\{\bp\in\Spec(R)~|~\bp\supset (z_2)\}$ which is a proper closed subset of $\Spec(R)$, then, for all $i\in \ZZ$, we have
\[C\cap \sigma^i(\css(B))=\{(z_1-i,z_2)\}\neq\emptyset.\]

\begin{question}
Is there a condition on $R$, $\sigma$, $H$, $J$ that is equivalent to $B$ being noetherian?
\end{question}

\subsection{BR algebras and GWAs}

We next move to a discussion of generalized Weyl algebras and how BR algebras generalize this notion.

\begin{definition}\label{defn.GWA}
Let $R$ be a unital $\kk$-algebra. Let $\sigma \in \Aut(R)$ and let $a \in \cnt(R)$ be a non-zero divisor. The \emph{(rank one) generalized Weyl algebra} (GWA) is generated as an algebra over $R$ by $x$ and $y$ subject to the relations
\[ xr = \sigma(r)x, \quad yr = \sigma\inv(r)y, \quad
yx = a, \quad xy=\sigma(a),\]
for all $r \in R$. We denote the GWA above by $R(x,y,\sigma,a)$.
\end{definition}

The GWA $R(x,y,\sigma, a)$ is naturally $\ZZ$-graded by setting $\deg x = 1$, $\deg y = -1$, and $\deg r = 0$ for all $r \in R$. We remark that in some definitions, see e.g. \cite{B1}, the element $a$ in the above definition is not required to be a non-zero divisor. In particular, we could even have $a=0$. This leads to some degenerate cases which cannot be expressed as a BR algebra as we have defined them. In \cite{BR} it is shown that any simple GWA may be realized as a BR algebra. In fact, this is true much more generally.

\begin{theorem}\label{thm.gwa}
Let $R$ be a commutative domain. As $\ZZ$-graded algebras, the rank one GWAs over $R$ are exactly the BR algebras $R(t, \sigma, H, J)$ in which $H$ and $J$ are nonzero principal ideals.
\end{theorem}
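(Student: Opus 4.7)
The plan is to construct an explicit graded isomorphism in each direction; the converse direction reduces immediately to the forward one.

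\textbf{From a BR algebra with principal ideals to a GWA.} Let $B = R(t,\sigma,H,J)$ with $H = (h)$ and $J = (j)$ nonzero. I would set $x := jt \in B_1$ and $y := \sigma^{-1}(h)t^{-1} \in B_{-1}$ and verify the four rank-one GWA relations by direct computation inside $R[t,t^{-1};\sigma]$, using only that $R$ is commutative: $xr = \sigma(r)x$, $yr = \sigma^{-1}(r)y$, $yx = \sigma^{-1}(hj)$, and $xy = hj = \sigma(\sigma^{-1}(hj))$. Setting $a := \sigma^{-1}(hj) \in R$, which is a non-zero divisor since $R$ is a domain and $h,j \neq 0$, these relations yield a graded $\kk$-algebra homomorphism $\phi : R(x,y,\sigma,a) \to B$. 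Because $Jt = R\cdot(jt) = Rx$ and $\sigma^{-1}(H)t^{-1} = R\cdot \sigma^{-1}(h)t^{-1} = Ry$, Lemma~\ref{lem.props}\eqref{gprops2} shows that the image of $\phi$ generates $B$ as a $\kk$-algebra, so $\phi$ is surjective.

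\textbf{Injectivity via graded pieces.} By the classical Bavula basis theorem, the degree-$n$ piece of the GWA (for $n \geq 1$) is the free rank-one $R$-module $Rx^n$. A direct induction, using $tr = \sigma(r)t$ and commutativity of $R$, gives $(jt)^n = j\sigma(j)\cdots\sigma^{n-1}(j)\,t^n$, so $\phi(Rx^n) = R\cdot j\sigma(j)\cdots\sigma^{n-1}(j)\,t^n = I^{(n)}t^n = B_n$; moreover the induced map $R \to B_n$ is multiplication by the nonzero element $j\sigma(j)\cdots\sigma^{n-1}(j) \in R$, hence a bijection onto the principal ideal $I^{(n)}$ it generates. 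The case $n \leq -1$ is symmetric, using $y^{|n|}$ and $H = (h)$, and $n = 0$ is the identity on $R$. Therefore $\phi$ is an isomorphism of $\ZZ$-graded algebras.

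\textbf{The converse, and the main obstacle.} Given a rank-one GWA $R(x,y,\sigma,a)$ over the commutative domain $R$ (so $a \neq 0$), I would take $J := R$ and $H := (\sigma(a))$, both nonzero principal ideals, and form $B := R(t,\sigma,H,J)$. Applying the first part to $B$ produces the GWA with parameter $\sigma^{-1}(\sigma(a)\cdot 1) = a$, which is the original algebra, so every rank-one GWA arises in the desired form. The only nontrivial input to the whole argument is Bavula's basis theorem for rank-one GWAs (which guarantees that the degree-$n$ piece of the GWA really is a free $R$-module of rank one); all remaining work is bookkeeping in the skew-Laurent algebra, so there is no real obstacle beyond keeping the four defining relations and the $\sigma$-twists straight.
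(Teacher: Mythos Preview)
Your argument is correct and constructs the same homomorphism $\phi$ as the paper, with the same defining element $a=\sigma^{-1}(hj)$. The only real difference is in how you prove $\phi$ is an isomorphism: you invoke Bavula's basis theorem to identify each graded piece of the GWA as a free rank-one $R$-module and then check that $\phi$ is bijective degree-by-degree (multiplication by a nonzero element of the domain $R$), whereas the paper instead writes down an explicit inverse $\psi:B\to A$ sending $jt\mapsto x$ and $\sigma^{-1}(h)t^{-1}\mapsto y$ and verifies directly that $\psi$ respects the skew-Laurent relations among the generators of $B$. Your approach is arguably cleaner once the Bavula basis is granted; the paper's approach is more self-contained since it does not appeal to any external structure result. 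For the converse you chose $(H,J)=((\sigma(a)),R)$ while the paper takes $(H,J)=(R,(\sigma(a)))$, but either choice feeds back into the forward direction to give the parameter $a$, so this is immaterial.
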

\begin{proof}
Let $J = (j)$ and $H = (h)$. Let  $a=\sigma\inv(jh)$ and set $A = R(x,y,\sigma,a)$. We claim $A \iso B$.

Define a map $\phi:A \sto B$ by 
\[ 
r \mapsto r \text{ for all $r \in R$}, \quad 
x \mapsto jt, \quad 
y \mapsto \sigma\inv(h) t\inv.\]
Then
\begin{align*}
    \phi(x)\phi(r)-\phi(\sigma(r))\phi(x) 
        &= (jt)r-\sigma(r)(jt) = j(tr-\sigma(r)t) = 0 \\
    \phi(y)\phi(r)-\phi(\sigma\inv(r))\phi(y) 
        &= (\sigma\inv(h) t\inv)r-\sigma\inv(r)(\sigma\inv(h) t\inv) 
        = \sigma\inv(h) (t\inv r - \sigma\inv(r) t\inv) = 0 \\ 
    \phi(y)\phi(x) - \phi(a) 
        &= (\sigma\inv(h) t\inv)(jt) - a 
        = \sigma\inv(h)\sigma\inv(j) - a = 0 \\
    \phi(x)\phi(y) - \phi(\sigma(a))
        &= (jt)(\sigma\inv(h) t\inv) - \sigma(a)
        = jh - \sigma(a) = 0.
\end{align*}
Hence, $\phi$ extends to a graded algebra homomorphism $A \sto B$. 

By Lemma~\ref{lem.props}\eqref{gprops2}, $B$ is generated as an algebra by $R$, $Jt$ and $\sigma\inv(H)t\inv$ and so it is generated as an algebra by $R$, $jt$ and $\sigma\inv(h)t\inv$. Therefore, we can define a graded algebra map $\psi:B \sto A$ by
\[
r \mapsto r \text{ for all $r \in R$}, \quad jt \mapsto x, \quad \sigma\inv(h)t\inv \mapsto y.
\]
Since $B$ is a subalgebra of the skew Laurent ring $R[t, t\inv; \sigma]$, we need only verify that $\psi$ preserves the relations in the skew Laurent ring (between the generators of $B$). Note that for all $r \in R$, we have
\begin{align*}
\psi(jt)\psi(r) - \psi(\sigma(r))\psi(jt)
    &=  xr - \sigma(r)x = 0\\
\psi(\sigma\inv(h)t\inv)\psi(r) - \psi(\sigma\inv(r))\psi(\sigma\inv(h)t\inv)
    &=  yr - \sigma\inv(r)y = 0\\
\psi(jt)\psi(\sigma\inv(h)t\inv) - \psi(jh)
    &=  xy - jh = xy - \sigma(a) = 0\\
\psi(\sigma\inv(h)t\inv)\psi(jt) - \psi(\sigma\inv(hj)) &= yx - \sigma\inv(jh) = yx - a = 0.
\end{align*}
Hence, $\psi$ extends to a graded algebra homomorphism $B \sto A$. It is clear that $\phi$ and $\psi$ are mutually inverse.

Conversely, if we fix $a \in R$, then the same argument as above shows that $R(x,y,\sigma,a)$ is a BR algebra with $J=(\sigma(a))$ and $H=R$.
\end{proof}

\begin{corollary}
If $R$ is a PID with nonzero ideals $H$ and $J$, then the BR algebra $R(t,\sigma,H,J)$ is a GWA. 
\end{corollary}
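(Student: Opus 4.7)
The plan is essentially to invoke Theorem~\ref{thm.gwa} directly, since a PID is already a commutative domain and every ideal is principal by definition. First I would observe that since $R$ is a PID, it is in particular a commutative domain, so the hypotheses of Theorem~\ref{thm.gwa} are satisfied. Next, by the defining property of a PID, the nonzero ideals $H$ and $J$ are each generated by a single element, i.e., they are nonzero principal ideals.

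Having verified both conditions, I would then simply apply Theorem~\ref{thm.gwa}, which asserts that under precisely these hypotheses (commutative domain base, nonzero principal $H$ and $J$), the BR algebra $R(t,\sigma,H,J)$ is (isomorphic as a $\ZZ$-graded algebra to) a rank one GWA over $R$. This gives the conclusion.

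There is no real obstacle here: the corollary is a direct specialization of the theorem, and the entire content of the proof is the observation that ``PID'' supplies ``commutative domain'' plus ``every nonzero ideal is principal'' for free. The only thing worth making explicit is which GWA one obtains: writing $H=(h)$ and $J=(j)$, the proof of Theorem~\ref{thm.gwa} identifies $R(t,\sigma,H,J)$ with $R(x,y,\sigma,a)$ for $a=\sigma^{-1}(jh)$, and I would mention this parameter for the reader's convenience.
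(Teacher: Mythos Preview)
Your proposal is correct and matches the paper's approach exactly: the corollary is stated without proof in the paper precisely because it is an immediate specialization of Theorem~\ref{thm.gwa}, and your argument spells out that specialization. The extra remark identifying the GWA parameter $a=\sigma^{-1}(jh)$ is a nice touch but not required.
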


There are many BR algebras that cannot be realized as a GWA. We give one example below.

\begin{example}\label{ex.nongwa}
Let $R=\kk[z_1,z_2]$ and define $\sigma \in \Aut_\kk(R)$ by $\sigma(z_1)=z_1-1$ and $\sigma(z_2)=z_2+1$. Set $H=(z_1-1,z_2+1)R$, $J=R$. Then $B=R(t,\sigma,H,J)$ is a BR algebra that is not simple because the ideal $(z_1+z_2)$ is fixed by $\sigma$ and hence $R$ is not $\sigma$-simple. It is clear that $B$ is not isomorphic to a GWA \emph{as graded algebras} because $B_{-1}$ is not a free module of rank $1$ over $B_0$. We conjecture that $B$ is not algebra-isomorphic to a GWA. 
\end{example}

\begin{remark}
Let $A=R(x,y,\sigma,a)$ be a GWA over a commutative base ring $R$. Then $A$ is the BR algebra $R(t,\sigma,H,J)$ with $J=(a)$ and $H=R$. Thus, $\css(B)=\{ \bp \in \Spec(R) \mid a \in \bp\}$. Suppose $A$ is simple. By Proposition \ref{prop.props}\eqref{props2}, $\css(B)$ is $\sigma$-lonely and this is equivalent to the condition that $\sigma$ has infinite order and for every positive integer $n$, $R=aR+\sigma^n(a)R$. Thus, the conditions for simplicity in a BR algebra reduce to those for a GWA \cite[Theorem 6.1]{Jprim}.
\end{remark}

Suppose $H=(h)$, $J=(j)$, $H'=(h')$, and $J'=(j')$ are nonzero principal ideals in a commutative domain $R$ and let $\sigma \in \Aut(R)$. By Theorem \ref{thm.gwa}, the BR algebras $B=R(t,\sigma,H,J)$, $B'=R(t,\sigma,H',J')$ are GWAs. If $hj=h'j'$, then it follows that $B \iso B'$. The next result generalizes this notion.

\begin{proposition}\label{prop.HJ-split}
Let $R$ be a commutative algebra and let $B=R(t,\sigma,H,J)$ and $B'=R(t,\sigma,H',J')$. Suppose that $\varphi_H:H\sto H'$ and $\varphi_J:J\sto J'$ are isomorphisms of $R$-modules such that
\[ \varphi_H(h)\varphi_J(j)=hj\quad \text{ for all }h\in H,~j\in J,\]
then $B\simeq B'$ as graded algebras.
\end{proposition}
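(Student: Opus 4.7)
The plan is to build a graded $\kk$-algebra isomorphism $\Phi \colon B \to B'$ one homogeneous component at a time, with $\Phi_0 = \id_R$ and $\Phi_{\pm n}$ on the $\pm n$ part obtained by substituting $\varphi_J$ (respectively $\varphi_H$) into each factor of the defining product $I^{(n)} = J\sigma(J)\cdots\sigma^{n-1}(J)$ (respectively $I^{(-n)} = \sigma^{-1}(H)\cdots\sigma^{-n}(H)$). Concretely, for $n \geq 1$ I would set
\[
\Phi_n\!\left( j_1\sigma(j_2)\cdots\sigma^{n-1}(j_n)\right) := \varphi_J(j_1)\sigma(\varphi_J(j_2))\cdots\sigma^{n-1}(\varphi_J(j_n)),
\]
extended $\kk$-linearly, and symmetrically for $n \leq -1$ using $\varphi_H$. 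The full map is then $\Phi(at^n) := \Phi_n(a)t^n$, and its image lands in $(I')^{(n)}t^n$ because $\varphi_J(J) = J'$ and $\varphi_H(H) = H'$.

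The hard part is well-definedness of $\Phi_n$: since $I^{(n)} \subseteq R$ is not free on the displayed products, I must show that if a finite sum of such products vanishes in $R$ then so does the corresponding sum of images. My plan here is an annihilator computation. Let $a = \sum_k j_1^{(k)} \sigma(j_2^{(k)}) \cdots \sigma^{n-1}(j_n^{(k)}) = 0$ and let $b$ denote the candidate image. Using commutativity of $R$ together with the compatibility $\varphi_H(h)\varphi_J(j) = hj$, multiplying $b$ by $\varphi_H(h_1)\sigma(\varphi_H(h_2))\cdots\sigma^{n-1}(\varphi_H(h_n))$ collapses each pair $\varphi_H(h_i)\varphi_J(j_i^{(k)})$ to $h_i j_i^{(k)}$ and yields $h_1\sigma(h_2)\cdots\sigma^{n-1}(h_n)\cdot a = 0$. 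Since $\varphi_H$ is surjective onto $H'$, this shows $b$ annihilates the ideal $H'\sigma(H')\cdots\sigma^{n-1}(H') = \sigma^n((I')^{(-n)})$, which is nonzero by the standing hypothesis on BR algebras; in the domain setting that governs the paper this forces $b=0$. The negative-degree case is symmetric, multiplying instead by $\sigma$-shifted elements built from $\varphi_J$.

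With the $\Phi_n$'s well defined, I would check multiplicativity only on the generators from Lemma~\ref{lem.props}\eqref{gprops2}, namely $R \cup Jt \cup \sigma^{-1}(H)t^{-1}$. The skew Laurent relations inherited from $R[t,t^{-1};\sigma]$ are preserved automatically by the $R$-linearity of $\varphi_J$ and $\varphi_H$, while the two crossing identities $(jt)(\sigma^{-1}(h)t^{-1}) = jh$ and $(\sigma^{-1}(h)t^{-1})(jt) = \sigma^{-1}(hj)$ transform under $\Phi$ into precisely the hypothesis $\varphi_H(h)\varphi_J(j) = hj$ and its $\sigma^{-1}$-twist. Finally, the same construction applied to the inverse pair $(\varphi_H^{-1},\varphi_J^{-1})$, which satisfies the analogous compatibility after substituting $h = \varphi_H^{-1}(h')$ and $j = \varphi_J^{-1}(j')$, yields a graded algebra map $\Psi \colon B' \to B$; by design $\Psi\Phi$ and $\Phi\Psi$ agree with the identity on the respective sets of generators and therefore everywhere, giving the desired graded isomorphism.
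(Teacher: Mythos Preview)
Your approach is essentially the paper's: define the map on the generating set $R \cup Jt \cup \sigma^{-1}(H)t^{-1}$ via $\varphi_J$ and $\varphi_H$, extend multiplicatively, and invert using $(\varphi_H^{-1},\varphi_J^{-1})$. The paper's proof is a one-line declaration that this works, while you have unpacked the one genuinely delicate point the paper suppresses, namely that ``extend multiplicatively'' is well defined on $I^{(n)}t^n$ even though elements of $I^{(n)}$ admit many presentations as sums of products $j_1\sigma(j_2)\cdots\sigma^{n-1}(j_n)$. Your annihilator trick---pairing each $\varphi_J(j_i^{(k)})$ with a $\sigma$-shifted $\varphi_H(h_i)$ to collapse back to $hj$---is clean and correct.

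One remark: your well-definedness argument uses that $b\cdot \sigma^n((I')^{(-n)}) = 0$ forces $b=0$, which needs $R$ to be a domain (or at least that this ideal has trivial annihilator). The proposition as stated only assumes $R$ is a commutative algebra, so strictly speaking you have proved it under a mild extra hypothesis---but so, implicitly, has the paper, and every application in the paper is over a domain. You were right to flag it.
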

\begin{proof}
The map $\varphi:B\sto B'$ such that $\varphi|_{B_0}=\id_R$, $\varphi(jt)=\varphi_J(j)t$, $\varphi(\sigma\inv(h)t^{-1})=\sigma\inv(\varphi_H(h))t\inv$, extended multiplicatively is the required graded isomorphism.
Note that the hypotheses imply, in particular, that $HJ = H'J'$.
\end{proof}

\begin{question}
In general, the isomorphism problem for BR algebras is open. For a fixed base ring $R$ and automorphism $\sigma$, which pairs of ideals $(H,J)$ and $(H', J')$ yield isomorphic BR algebras?
\end{question}

\subsection{Constructions}

We now study maps between various BR algebras and consider various ring-theoretic constructions, including invariant rings, quotients, and localizations. These all rely heavily on the following functoriality result for BR algebras.

\begin{proposition}\label{prop.induced}
Let $B=R(t,\sigma,H,J)$ and $B'=R'(t',\sigma',H',J')$. Suppose $\phi:R \sto R'$ is a $\kk$-algebra homomorphism such that $\phi(H) \subseteq H'$, $\phi(J)\subseteq J'$, and $\phi\sigma=\sigma'\phi$. Then for each $\gamma \in \kk^{\times}$ there is a graded algebra morphism $\Phi_{\gamma} :B \sto B'$ satisfying $\Phi_{\gamma}(r t) = \phi(r) \gamma t'$ for all $r \in J \cup \sigma\inv(H)$ such that the following diagram commutes:
\[\begin{tikzcd}        
    R \arrow{r}{\phi} \arrow{d}{\iota} & R'  \arrow{d}{\iota'} \\
    B \arrow{r}{\Phi_\gamma} & B' 
\end{tikzcd}\]
Furthermore, we have the following:
\begin{enumerate}
    \item If $\Phi_{\gamma}$ is injective (resp. surjective), then $\phi$ is injective (resp. surjective).
    \item If $R'$ is a domain and $\phi$ is injective, then $\Phi_{\gamma}$ is injective.
    \item If $\phi(H)=H'$, $\phi(J)=J'$, and $\phi$ is surjective, then $\Phi_{\gamma}$ is surjective.
\end{enumerate}
\end{proposition}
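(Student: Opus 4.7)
The plan is to construct $\Phi_\gamma$ by first extending $\phi$ to a map between the ambient skew Laurent rings, and then showing this extension restricts to $B$. Specifically, I would define
\[\tilde\Phi_\gamma: R[t,t^{-1};\sigma] \to R'[t',t'^{-1};\sigma'], \qquad \tilde\Phi_\gamma\Bigl(\sum_n r_n t^n\Bigr) = \sum_n \phi(r_n)\, \gamma^n (t')^n.\]
To see this is a well-defined $\kk$-algebra homomorphism, the essential check is that it respects the skew commutation relation $tr = \sigma(r)t$, which follows immediately from $\phi\sigma=\sigma'\phi$: indeed $\tilde\Phi_\gamma(tr) = \phi(\sigma(r))\gamma t' = \sigma'(\phi(r))\gamma t' = \gamma t'\, \phi(r) = \tilde\Phi_\gamma(t)\tilde\Phi_\gamma(r)$, and the analogous check for $t^{-1}$ works the same way since $\gamma\in\kk^\times$ commutes with everything.

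Next I would check that $\tilde\Phi_\gamma(B)\subseteq B'$, and then take $\Phi_\gamma := \tilde\Phi_\gamma|_B$. Using $\phi(J)\subseteq J'$, $\phi(H)\subseteq H'$, and $\phi\sigma=\sigma'\phi$, one verifies that for $n \geq 1$,
\[\phi(I^{(n)}) = \phi(J)\,\sigma'(\phi(J))\cdots \sigma'^{n-1}(\phi(J)) \subseteq J' \sigma'(J')\cdots \sigma'^{n-1}(J') = I'^{(n)},\]
and symmetrically for $n\leq -1$, so $\tilde\Phi_\gamma(I^{(n)}t^n)\subseteq I'^{(n)}(t')^n$. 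This also shows $\Phi_\gamma$ preserves the $\ZZ$-grading, and commutativity of the square is automatic from the definition on $B_0 = R$.

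For the three additional statements, I would use the grading to reduce to degree-by-degree arguments. Property (1): since $\Phi_\gamma$ is graded and $\Phi_\gamma|_{B_0}$ is just $\phi$ (upon identifying $B_0=R$, $B'_0=R'$), injectivity or surjectivity of $\Phi_\gamma$ restricts to the corresponding property for $\phi$. Property (2): if $\Phi_\gamma(b)=0$ for some $b = \sum r_n t^n$, then by grading each $\phi(r_n)\gamma^n(t')^n$ vanishes in $R'[t',t'^{-1};\sigma']$, which is a domain (since $R'$ is), so $\phi(r_n)=0$ and injectivity of $\phi$ gives $r_n=0$. Property (3): by Lemma~\ref{lem.props}\eqref{gprops2}, it suffices to show $R'$, $J't'$, and $\sigma'^{-1}(H')(t')^{-1}$ lie in $\im(\Phi_\gamma)$; surjectivity of $\phi$ handles $R'$, while $\phi(J)=J'$ lets us lift any $j'\in J'$ to some $j\in J$ with $\Phi_\gamma(\gamma^{-1}jt)=j't'$, and analogously for $\sigma'^{-1}(H')(t')^{-1}$ using $\phi(H)=H'$.

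There is no real obstacle here — the proof is essentially a careful bookkeeping exercise. The only subtlety is keeping track of where $\gamma$ goes (so that the commutation relations and the generating-set identification are consistent), but once $\tilde\Phi_\gamma$ is set up as above, everything follows routinely from the three hypotheses $\phi\sigma=\sigma'\phi$, $\phi(H)\subseteq H'$, $\phi(J)\subseteq J'$.
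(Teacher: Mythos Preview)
Your proposal is correct and follows essentially the same approach as the paper: define the map on the ambient skew Laurent ring via $t\mapsto \gamma t'$ and $r\mapsto \phi(r)$, check the skew relation using $\phi\sigma=\sigma'\phi$, restrict to $B$ using $\phi(I^{(n)})\subseteq I'^{(n)}$, and then handle (1)--(3) degree by degree exactly as you describe, invoking Lemma~\ref{lem.props}\eqref{gprops2} for surjectivity. Your write-up is in fact slightly more explicit than the paper's in verifying $\phi(I^{(n)})\subseteq I'^{(n)}$, but the argument is otherwise identical.
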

\begin{proof}
Define a map $\Phi_{\gamma}: R[t,t\inv;\sigma] \sto R'[t',(t')\inv;\sigma']$ by $\Phi_{\gamma}(t) = \gamma t'$, $\Phi_{\gamma}(t\inv) = \gamma\inv (t')\inv$, and $\Phi_{\gamma}(r) = \phi(r)$ for all $r \in R$. 
Since
\[
\Phi_{\gamma}(tr - \sigma(r)t) =
\Phi_{\gamma}(t)\Phi_{\gamma}(r)-\Phi_{\gamma}(\sigma(r))\Phi_{\gamma}(t)
    = \gamma t'\phi(r) - \phi(\sigma(r)) \gamma t'
    = \gamma (\sigma'\phi-\phi\sigma)(r)t'
    = 0,
\]
Then $\Phi_{\gamma}$ extends to an algebra homomorphism $R[t,t\inv;\sigma] \sto R'[t',(t')\inv;\sigma']$. Since $\phi(H) \subseteq H$ and $\phi(J) \subseteq J$, then it is clear that $\Phi_{\gamma}$ restricts to a homomorphism $B \sto B'$. By abuse of notation we call the restriction $\Phi_{\gamma}$, as well. By construction, $\Phi_{\gamma}$ respects the $\ZZ$-grading, so is a graded algebra map.

(1) Since $\Phi_\gamma$ and $\phi$ agree on $R$, then it is clear that when $\Phi_\gamma$ injective/surjective, then so is $\phi$. 

(2) Suppose $R'$ is a domain and $\phi$ is injective. Let $at^n \in \ker\Phi_\gamma$. Then $0=\Phi_\gamma(at^n) = \gamma^n \phi(a) (t')^n$. Since $B'$ is a domain by Lemma \ref{lem.props}\eqref{gprops3}, then $\phi(a)=0$. Thus, $a=0$.

(3) Suppose $\phi(H)=H'$, $\phi(J)=J'$, and $\phi$ is surjective. To show that $\Phi_{\gamma}$ is surjective, it suffices to show that the image contains $B'_0$, $B'_{1}$ and $B'_{-1}$, as these generate $B'$ as an algebra. If $a' \in B'_0 = R'$, since $\phi$ is surjective, there exists $a \in B_0$ such that $a' = \phi(a) = \Phi_{\gamma}(a)$. Further, if $a't' \in B'_1$, then $a' \in J$ and so by hypothesis, there exists $a \in J$ such that $\phi(a) = a'$. Then $\Phi_{\gamma}(\gamma\inv at) = a't'$. Analogously, $B'_{-1}$ is in the image of $\Phi_{\gamma}$, and so $\Phi_{\gamma}$ is surjective.
\end{proof}

The previous result implies that if $\phi$ is an automorphism of $R$ such that $\phi$ commutes with $\sigma$, $\phi(H) \subseteq H$ and $\phi(J)\subseteq J$, then for every $\gamma \in \kk^\times$, $\phi$ lifts to a graded automorphism $\Phi_\gamma$ of $R(t,\sigma,H,J)$. The following is a generalized version of a result of Jordan and Wells \cite{JW} for rank one GWAs. Invariant rings of GWAs have also been studied in \cite{GHo,GR,GW1,KK}.

\begin{proposition}\label{prop.fixed}
Let $R$ be a commutative noetherian domain and let $B=R(t,\sigma,H,J)$. Let $\gamma \in \kk^\times$ and let $\phi \in \Aut(R)$ satisfying the conditions of Proposition \ref{prop.induced} for $B=B'$, $H=H'$, and $J=J'$. Then $\phi$ lifts to an automorphism $\Phi_\gamma$ of $B$. Set $n=\ord(\phi)$, $m=\ord(\gamma)$, and
\begin{align*}
    K &= \left(\sigma\inv(H)\sigma^{-2}(H) \cdots \sigma^{-m}(H) \right) \cap R^\grp{\phi} \\
    L &= \left( J\sigma(J) \cdots \sigma^{m-1}(J) \right) \cap R^\grp{\phi}.
\end{align*}
If $n,m < \infty$ and $\gcd(n,m)=1$, then
\[ B^\grp{\Phi_\gamma} = R^\grp{\phi}\left(t^m,\sigma^m,K,L \right).\]
\end{proposition}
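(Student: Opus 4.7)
The plan is to exploit that $\Phi_\gamma$ is a $\ZZ$-graded automorphism of $B$, so its fixed ring inherits the grading and can be computed one graded piece at a time. First I would check that $\Phi_\gamma$ is indeed an automorphism: $\phi\inv$ satisfies the same hypotheses of Proposition~\ref{prop.induced} (with $\gamma$ replaced by $\gamma\inv$), lifting to a graded algebra map $\Phi_{\gamma\inv}$, and the two are visibly mutually inverse. Moreover, a direct computation on $rt^j$ shows $\Phi_\gamma^k(rt^j)=\phi^k(r)\gamma^{jk}t^j$, so the order of $\Phi_\gamma$ is $\mathrm{lcm}(n,m)=nm$ by the coprimality hypothesis.

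For $a\in I^{(k)}$, one has $\Phi_\gamma(at^k)=\phi(a)\gamma^k t^k$, so $at^k$ is fixed if and only if $\phi(a)=\gamma^{-k}a$. Iterating $\phi$ and using $\phi^n=\id$ yields $a=\gamma^{-nk}a$, which for nonzero $a$ forces $\gamma^{nk}=1$, i.e.\ $m\mid nk$; the coprimality hypothesis $\gcd(n,m)=1$ then gives $m\mid k$. In that case $\gamma^{-k}=1$ and the condition reduces to $\phi(a)=a$, i.e.\ $a\in R^{\grp{\phi}}$. Consequently,
\[ B^{\grp{\Phi_\gamma}} \;=\; \bigoplus_{j\in\ZZ}\bigl(I^{(mj)}\cap R^{\grp{\phi}}\bigr)\,t^{mj}. \]

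It remains to identify this with the BR algebra $\widetilde{B}:=R^{\grp{\phi}}(t^m,\sigma^m,K,L)$. Since $\sigma$ commutes with $\phi$, the subring $R^{\grp{\phi}}$ is $\sigma$-invariant, so $\sigma^m$ is an automorphism of $R^{\grp{\phi}}$ and $K,L$ are genuine ideals of $R^{\grp{\phi}}$. The degree-$0$ pieces of the two algebras agree trivially, while the degree-$(\pm m)$ pieces coincide with $Lt^m$ and $\sigma^{-m}(K)t^{-m}$ by the very definitions of $L$ and $K$, combined with the formulas for $I^{(\pm m)}$ in Definition~\ref{defn.BR}. For $|j|\geq 2$, I need to show the degree-$mj$ piece of $\widetilde{B}$, namely $L\,\sigma^m(L)\cdots\sigma^{(j-1)m}(L)\cdot t^{mj}$ for $j>0$ (and analogously for $j<0$), equals $(I^{(mj)}\cap R^{\grp{\phi}})\,t^{mj}$. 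One inclusion is immediate: each $\sigma^{im}(L)\subseteq\sigma^{im}(I^{(m)})\cap R^{\grp{\phi}}$, so the product lies in $I^{(mj)}\cap R^{\grp{\phi}}$.

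The main obstacle will be the reverse inclusion: given $a\in I^{(mj)}\cap R^{\grp{\phi}}$, one must decompose $a$ as a sum of products of elements from the $\sigma^{im}(L)$. This is really the question of whether intersection with $R^{\grp{\phi}}$ commutes with products of ideals in $R$, which can fail for general $\grp{\phi}$-stable ideals, so the $\phi$-invariance of $H$ and $J$ and the coprimality hypothesis must be doing real work. I would try a Reynolds-type averaging $\frac{1}{n}\sum_{i=0}^{n-1}\phi^i$ applied to an arbitrary factorization in $I^{(mj)}=I^{(m)}\sigma^m(I^{(m)})\cdots\sigma^{(j-1)m}(I^{(m)})$, paired with induction on $|j|$, to produce a $\phi$-fixed factorization through the $\sigma^{im}(L)$. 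Pinning down the precise hypotheses (e.g.\ on the characteristic of $\kk$ relative to $n$) under which this averaging returns elements in the expected products of intersections is the crux of the argument.
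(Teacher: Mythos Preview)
Your argument through the identification $B^{\grp{\Phi_\gamma}} = \bigoplus_{j\in\ZZ}(I^{(mj)}\cap R^{\grp{\phi}})\,t^{mj}$ is essentially the paper's proof; the only cosmetic difference is that the paper iterates $\phi$ $m$ times (obtaining $\phi^m(a)=a$, so the $\phi$-orbit of $a$ has size dividing both $m$ and $n$, hence size $1$) rather than iterating $n$ times as you do. Where you go beyond the paper is in your final paragraph: the paper's proof simply \emph{stops} after establishing that fixed homogeneous elements lie in degrees divisible by $m$ with $\phi$-fixed coefficients, treating the identification with the BR algebra $R^{\grp{\phi}}(t^m,\sigma^m,K,L)$ as already done. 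You are right to flag the reverse inclusion $I^{(mj)}\cap R^{\grp{\phi}} \subseteq L\,\sigma^m(L)\cdots\sigma^{(j-1)m}(L)$ for $|j|\geq 2$ as a genuine obstacle that the paper does not address.

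In fact your proposed Reynolds-averaging fix cannot work in general, because the inclusion can fail outright. Take $R=\kk[x,y]$ with $\operatorname{char}\kk=0$, $\sigma=\id_R$, $H=R$, $J=(x,y)$, $\phi(x)=-x$, $\phi(y)=-y$ (so $n=2$), and $\gamma$ a primitive cube root of unity (so $m=3$, $\gcd(n,m)=1$). Then $R^{\grp{\phi}}=\kk[x^2,xy,y^2]$ and $L=(x,y)^3\cap R^{\grp{\phi}}$ is the $R^{\grp{\phi}}$-span of all even-degree monomials of degree $\geq 4$; hence $L^2$ (the product in $R^{\grp{\phi}}$) is spanned by even-degree monomials of degree $\geq 8$. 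But $(B^{\grp{\Phi_\gamma}})_6 = \bigl((x,y)^6\cap R^{\grp{\phi}}\bigr)t^6$ contains $x^6t^6$, and $x^6\notin L^2$. So $B^{\grp{\Phi_\gamma}}$ strictly contains the BR algebra $R^{\grp{\phi}}(t^3,\id,K,L)$, and the equality in the proposition fails as stated. Your instinct that extra hypotheses are needed---or that the conclusion should be the graded description $\bigoplus_j(I^{(mj)}\cap R^{\grp{\phi}})t^{mj}$ rather than the specific BR presentation---is on target.
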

\begin{proof}
It is clear that $R^\grp{\phi}\left(t^m,\sigma^m,K,L \right) \subset B^\grp{\Phi_\gamma}$. We will show the other inclusion.

Since $\Phi_\gamma$ respects the $\ZZ$-grading on $B$, then it follows that $B^\grp{\Phi_\gamma}$ is again $\ZZ$-graded. It is clear that $(B^\grp{\Phi_\gamma})_0=R^\grp{\phi}$.

Let $k \neq 0$ and suppose $at^k \in I^{(k)}t^k$ is fixed by $\Phi_\gamma$. Since $at^k = \Phi_\gamma(at^k) = \gamma^k\phi(a) t^k$, then because $R$ (and hence $B$) is a domain, then we have $\phi(a) = \gamma^{-k}a$. But then $a=\phi^m(a) = (\gamma^{-k})^m a = (\gamma^m)^{-k}a$. Hence, $\left|\orb_\phi(a)\right|$ divides $m$, but $\left|\orb_\phi(a)\right|$ divides $n$ by the Orbit-Stabilizer Theorem. This is a contradiction unless $k \mid m$. Hence, $k=\ell m$ and so $at^k = \phi(at^k) = \phi(a)t^k$, so $a \in R^\grp{\phi}$.
\end{proof}

The gcd condition is necessary in the proof above, as illustrated by the following example.

\begin{example}
Set $R=\kk[z]$ and define $\sigma \in \Aut(R)$ by $\sigma(z)=qz$ for some $q \in \kk^\times$. Fix $a \in \kk[z^2]$, then set $J=(a)$ and $H=R$. The corresponding BR algebra $B=R(t,\sigma,H,J)$ is isomorphic to the GWA $\kk[z](\sigma,a)$ by Theorem \ref{thm.gwa}.

Define $\phi \in \Aut(B)$ to be the automorphism given by $\phi(z)=-z$ and $\phi(t)=-t$. Note that $\phi$ respects the $\ZZ$-grading on $B$ and hence $(B^\grp{\phi})_k = (B_k)^\grp{\phi}$ for all $k$. Clearly $(B^\grp{\phi})_0=(B_0)^\phi = \kk[z^2]$. Now $(B^\grp{\phi})_1$ is the $(B^\grp{\phi})_0$-module generated by $zt$. But $(B^\grp{\phi})_2$ is generated over $(B^\grp{\phi})_0$ by $t^2$. Hence, $(B^\grp{\phi})_{>0}$ is not generated as an algebra by $(B^\grp{\phi})_1$ and so it is not a BR algebra.
\end{example}

\begin{proposition}\label{prop.quot}
Let $R$ be a commutative algebra. Let $B=R(t,\sigma,H,J)$ and let $L$ be a $\sigma$-stable prime ideal of $R$ such that $L \notin \css(B)$ (see \eqref{eq.SB}). Let $\overline{\sigma}$ denote the induced automorphism on $\overline{R}=R/L$, let $\overline{H}=(H+L)/L$, and let $\overline{J}=(J+L)/L$. Finally, set $\overline{B}=\overline{R}(\overline{t},\overline{\sigma},\overline{H},\overline{J})$. Then $\overline{B} \iso B/K$ where $K=\bigoplus_{n\in\ZZ} (L\cap I^{(n)})t^n$.
\end{proposition}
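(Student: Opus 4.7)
The plan is to produce the desired isomorphism by applying the first isomorphism theorem to the graded algebra map $\Phi_1 : B \to \overline{B}$ furnished by Proposition \ref{prop.induced}. I would apply that proposition to the canonical surjection $\phi : R \to \overline{R}$ with $\gamma = 1$; the three hypotheses $\phi(H) \subseteq \overline{H}$, $\phi(J) \subseteq \overline{J}$, and $\phi\sigma = \overline{\sigma}\phi$ (in fact with equalities on the first two) are built into the definitions of $\overline{H}$, $\overline{J}$, and $\overline{\sigma}$, the last of these using that $L$ is $\sigma$-stable. Since $\phi$ is surjective and $\phi(H) = \overline{H}$, $\phi(J) = \overline{J}$, part (3) of Proposition \ref{prop.induced} immediately yields a surjective graded homomorphism $\Phi_1 : B \to \overline{B}$.

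Before invoking the first isomorphism theorem I would pause to verify that $\overline{B}$ is genuinely a BR algebra in the sense of Definition \ref{defn.BR}, i.e., that $\overline{I}^{(n)} \neq 0$ for every $n$. This is precisely where the hypothesis $L \notin \css(B)$ enters: since $L$ is prime, $HJ \not\subseteq L$ forces both $H \not\subseteq L$ and $J \not\subseteq L$, so $\overline{H}$ and $\overline{J}$ are nonzero ideals in the domain $\overline{R} = R/L$, and the remark following Definition \ref{defn.BR} guarantees $\overline{I}^{(n)} \neq 0$ for all $n$. I expect this to be the main delicate point; the rest of the argument is bookkeeping.

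Finally I would compute $\ker \Phi_1$. Because $\Phi_1$ is graded, its kernel decomposes as $\bigoplus_n K_n$ with $K_n \subseteq I^{(n)} t^n$. Since $\phi$ intertwines $\sigma$ and $\overline{\sigma}$, one checks that $\phi(I^{(n)}) = \overline{I}^{(n)}$ for every $n$, so on the degree-$n$ component $\Phi_1(a t^n) = \phi(a)\overline{t}^n$. Identifying $I^{(n)} t^n \cong I^{(n)}$ and $\overline{I}^{(n)}\overline{t}^n \cong \overline{I}^{(n)}$ as abelian groups (both sit inside the ambient skew Laurent rings where $t^n$, $\overline{t}^n$ are regular), the condition $\phi(a)\overline{t}^n = 0$ reduces to $\phi(a) = 0$, i.e., $a \in L \cap I^{(n)}$. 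Therefore $\ker \Phi_1 = \bigoplus_{n \in \ZZ} (L \cap I^{(n)}) t^n = K$, and the first isomorphism theorem delivers $B/K \cong \overline{B}$ as graded algebras.
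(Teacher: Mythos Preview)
Your proposal is correct and follows essentially the same approach as the paper: both apply Proposition~\ref{prop.induced} with $\gamma=1$ to the canonical surjection $\phi:R\to\overline{R}$, invoke part~(3) for surjectivity, and then compute the graded kernel degree-by-degree via $\Phi(at^n)=\phi(a)\overline{t}^n$. Your treatment is in fact slightly more careful than the paper's, since you explicitly explain how the hypothesis $L\notin\css(B)$ (together with $L$ prime) is used to ensure $\overline{H},\overline{J}\neq 0$ in the domain $\overline{R}$, so that $\overline{B}$ is a bona fide BR algebra---a point the paper leaves implicit.
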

\begin{proof}
Let $\phi:R \sto \overline{R}$ be the canonical map. It is clear that $\phi(H)=\overline{H}$ and $\phi(J)=\overline{J}$. Setting $\gamma=1$, Proposition \ref{prop.induced} now implies there exists a surjective map $\Phi:B \sto \overline{B}$. Clearly $K \subset \ker\Phi$. It remains to show that $\ker\Phi \subset K$.

Let $b \in \ker\Phi$. Since $\Phi$ is a homogeneous map of degree 0, we may assume $b$ is homogeneous. If $\deg(b)=0$, then $b\in R$ and $0=\Phi(b)=b+L \in \overline{R}$, so $b \in L$. Now suppose $\deg(b)=n \neq 0$. Then $b=at^n$ for some $a \in I^{(n)}$ so $0=\Phi(b) = \phi(a)\overline{t}^n$. Hence, $0 = \phi(a) = \overline{a}$ and it follows that $a \in L$.
\end{proof}
\begin{example}
(1) Let $R=\kk[z_1,z_2]$, $J=(z_1)$, $H=R$, and define $\sigma \in \Aut(R)$ by $\sigma(z_1)=qz_1$ for some $q \in \kk^\times$ and $\sigma(z_2)=z_2$. Let $B=R(t,\sigma,H,J)$ and let $L=(z_2)$ in $R$. Then $L$ is $\sigma$-stable and $L \notin \css(B)$. Let $\Phi$ be as in Proposition \ref{prop.quot} and suppose $b = at \in \ker\Phi$ with $a \in J$. Then $\phi(a)=0$ so $a \in L$. Thus, $a \in J \cap L = JL$. It follows in this example that $\ker\Phi = BL$. 

(2) Let $R=\kk[z_1,z_2,z_3]$, $J=(z_1,z_3)$, $H=R$, and define $\sigma \in \Aut(R)$ by $\sigma(z_1)=qz_1$ for some $q \in \kk^\times$, $\sigma(z_2)=z_2$, and $\sigma(z_3)=z_3$. Let $B=R(t,\sigma,H,J)$ and let $L=(z_2,z_3)$ in $R$. Then $L$ is $\sigma$-stable and $L \notin \css(B)$. Let $\Phi$ be as in Proposition \ref{prop.quot} and suppose $b = at \in \ker\Phi$ with $a \in J$. Then $\phi(a)=0$ so $a \in L$. Thus, $a \in J \cap L$ but in this case $J \cap L = (z_1z_2,z_3) \neq JL$. So in this case $\ker\Phi \neq BL$.
\end{example}

\begin{lemma}\label{lem.local}
Let $R$ be a commutative noetherian domain and set $B=R(t,\sigma,H,J)$. Let $S$ be a $\sigma$-stable multiplicative subset of $R$. Then $\sigma$ extends to a unique automorphism $\widehat{\sigma}: S\inv R \to S\inv R$. Letting $\widehat{H} = S\inv H$ and $\widehat{J} = S\inv J$, define $\widehat{B} = S\inv R(\widehat{t}, \widehat{\sigma}, \widehat{H}, \widehat{J})$. Then, viewing $S$ as a subset of $B$, we have $S\inv B = \widehat{B}$.
\end{lemma}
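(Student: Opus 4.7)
The plan is to realize both $S\inv B$ and $\widehat{B}$ as subalgebras of the ambient skew Laurent ring $S\inv R[\widehat{t},\widehat{t}\inv;\widehat{\sigma}]$ and show that the two subalgebras agree. First, since $S$ is $\sigma$-stable, the assignment $r/s \mapsto \sigma(r)/\sigma(s)$ yields a well-defined automorphism $\widehat{\sigma}$ of $S\inv R$; this is the unique extension of $\sigma$ by the universal property of localization, since $\widehat{\sigma}|_R = \sigma$ forces $\widehat{\sigma}(1/s) = 1/\sigma(s)$. In particular, $R[t,t\inv;\sigma]$ sits inside $S\inv R[\widehat{t},\widehat{t}\inv;\widehat{\sigma}]$ via the identification $t = \widehat{t}$, and both $B$ and $\widehat{B}$ live inside this larger algebra.

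Next, I would verify that $S$ is a two-sided Ore set in $B$. Since $R$ is commutative and $S$ is $\sigma$-stable, for any $s \in S$ and any homogeneous $at^n \in B$ one has
\[ s \cdot (at^n) = sa\, t^n = as\, t^n = (at^n) \cdot \sigma^{-n}(s),\]
and $\sigma^{-n}(s) \in S$. This equation (together with its variant where $s$ is replaced by $\sigma^n(s)$) yields both the left and right Ore conditions, so $S\inv B$ exists. Since $B$ is a domain by Lemma~\ref{lem.props}\eqref{gprops3}, the canonical map $B \sto S\inv B$ is injective, and $S\inv B$ embeds into $S\inv R[\widehat{t},\widehat{t}\inv;\widehat{\sigma}]$ as the subalgebra generated by $B$ and $S\inv$.

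The key technical step is to identify the graded components of $\widehat{B}$ via $\widehat{I}^{(n)} = S\inv I^{(n)}$ for every $n \in \ZZ$. Because localization commutes with finite products of ideals in a commutative ring and $\widehat{\sigma}(S\inv J) = S\inv \sigma(J)$, an easy induction gives, for $n \geq 1$,
\[ \widehat{I}^{(n)} = \widehat{J}\,\widehat{\sigma}(\widehat{J})\cdots\widehat{\sigma}^{n-1}(\widehat{J}) = S\inv\!\left(J\sigma(J)\cdots\sigma^{n-1}(J)\right) = S\inv I^{(n)},\]
and analogously for $n \leq -1$. From this, the equality $S\inv B = \widehat{B}$ follows. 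The inclusion $B \subseteq \widehat{B}$ is immediate since $I^{(n)} \subseteq \widehat{I}^{(n)}$, and because each $s \in S$ is invertible in $\widehat{B}_0 = S\inv R$, the universal property of Ore localization yields $S\inv B \subseteq \widehat{B}$. For the reverse inclusion, any homogeneous element of $\widehat{B}$ of degree $n$ has the form $\widehat{a}\widehat{t}^n$ with $\widehat{a} \in \widehat{I}^{(n)} = S\inv I^{(n)}$; writing $\widehat{a} = a/s$ with $a \in I^{(n)}$ and $s \in S$ shows $\widehat{a}\widehat{t}^n = s\inv(a t^n) \in S\inv B$.

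The argument is largely routine, and the main subtlety is the identification $\widehat{I}^{(n)} = S\inv I^{(n)}$: it depends on $S$ being genuinely $\sigma$-stable (so that $\widehat{\sigma}$ is an automorphism intertwining localization) and on localization commuting with finite products of ideals in the commutative setting. Everything else reduces to unwinding definitions inside the ambient skew Laurent ring.
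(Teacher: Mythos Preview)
Your proof is correct and takes a somewhat different route from the paper's. The paper invokes the functoriality result (Proposition~\ref{prop.induced}) applied to the canonical map $\phi:R\to S\inv R$ to obtain an injective algebra map $\Phi:B\to\widehat{B}$, then uses the universal property of localization to extend to $\Psi:S\inv B\to\widehat{B}$, and finally checks that $\Psi$ is bijective (injectivity via the domain hypothesis, surjectivity asserted as ``clear''). By contrast, you embed both $S\inv B$ and $\widehat{B}$ inside the common ambient ring $S\inv R[\widehat{t},\widehat{t}\inv;\widehat{\sigma}]$ and prove the two subalgebras coincide by computing the graded pieces explicitly via the identity $\widehat{I}^{(n)}=S\inv I^{(n)}$. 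Your approach is more self-contained---it does not depend on the earlier functoriality proposition, and along the way it verifies explicitly that $S$ is an Ore set in $B$, a point the paper leaves implicit. The paper's approach has the virtue of reusing the machinery already developed, keeping the argument short; your approach makes the graded structure of $\widehat{B}$ transparent and in particular makes the surjectivity step (which the paper waves through) completely explicit.
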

\begin{proof}
This is similar to the proof of Proposition \ref{prop.quot}. Let $\phi:R \sto S\inv R$ be the canonical map. Clearly $\phi(H) \subseteq \widehat{H}$ and $\phi(J) \subseteq \hat{J}$. Setting $\gamma=1$, Proposition \ref{prop.induced} implies the existence of a map $\Phi:B \sto \widehat{B}$. The map $\phi$ is injective and hence so is $\Phi$. Since $\Phi$ maps each element of $S$ to a unit in $\widehat{B}$, then by the universal property of localization there is a map,
\[ \Psi: S\inv B \to \widehat{B}.\]
It is clear that $\Psi$ is surjective. Let $a \in \ker\Psi$. Then $sa \in B$ for some $s \in S$.  The restriction of $\Psi$ to $B$ coincides with $\Phi$, and hence $\Phi(sa)=\Psi(sa)=\Psi(s)\Psi(a)=0$. Since $\Phi$ is injective, this implies that $sa=0$, so $a=0$.  
\end{proof}

The hypotheses in the previous lemma can be relaxed somewhat. In particular, the condition that $R$ be a commutative domain can be replaced by the condition that $S$ consist of regular central elements. We leave the details for the interested reader.

Under the hypotheses of Lemma \ref{lem.local}, we can take $S=R\backslash\{0\}$, which is clearly an Ore set of $R$ (and hence of $B$). Then $S\inv B = \Frac(R)(\widehat{t},\widehat{\sigma},\widehat{H},\widehat{J}) \iso \Frac(R)[t,t\inv;\sigma]$.

\section{Weight modules}\label{sec.weight}

For this section, we let $B=R(t,\sigma,H,J)$ be a BR algebra, where $R$ is a $\kk$-algebra which is a commutative domain. We would like to describe all the simple weight modules, i.e. the simple objects in $(B,R)\wmod$. There is a $\ZZ$-action on $\Maxspec(R)$, defined by $n\cdot \frm=\sigma^n(\frm)$ for all $n\in\ZZ$, $\frm\in\Maxspec(R)$. We frequently make use of the set $\css(B)$ as defined in \eqref{eq.SB}. 

\begin{notation}
For $\cO\in\Maxspec(R)/\ZZ$ we denote by $(B,R)\wmodO$ the full subcategory of modules $M\in (B,R)\wmod$ such that $\Supp_R(M)\subseteq \cO$.
\end{notation}

Notice that if $M\in (B,R)\wmod$, then $B_{\pm 1}M_{\frm} \subseteq M_{\sigma^{\pm 1}(\frm)}$ for each $\frm \in \Supp_R(M)$, so it follows immediately that
\[(B,R)\wmod\simeq \prod_{\cO\in\Maxspec(R)/\ZZ}(B,R)\wmodO.\]

In light of the above, our goal will be to describe the simple objects in $(B,R)\wmodO$ for the various orbits $\cO \in \Maxspec(R)/\ZZ$. This analysis breaks down into two cases depending on whether $\cO$ is infinite or finite. For the former case, in Section~\ref{sec.inforb} below, we give a complete description of the simple weight modules. The finite-orbit case will require some additional hypotheses and is discussed in Section~\ref{sec.finorb} below.

If $\frm\in\Maxspec(R)$, then $\sigma$ induces a field isomorphism $R/\frm\stackrel{\simeq}{\sto} R/\sigma(\frm)$, hence we can consider both $M_\frm$ and $M_{\sigma(\frm)}$ as vector spaces over $R/\frm$ (or $R/\sigma(\frm)$, if desired). The elements $j \in J$ and $h \in H$ define $R/\frm$-linear maps:
\begin{align}\label{eq.mmaps}
jt:M_{\frm}\sto M_{\sigma(\frm)},\qquad \sigma\inv(h)t\inv: M_{\sigma(\frm)}\sto M_{\frm},
\end{align}
given by left multiplication by $jt$ and $\sigma\inv(h)t\inv$, respectively.

\begin{proposition}\label{prop.inj-maps}
Let $M\in (B,R)\wmod$ and $\frm\in\Maxspec(R)$ such that $\sigma(\frm)\not\in\css(B)$. 
\begin{enumerate}
\item \label{inj1} For all $j\in J\setminus\sigma(\frm)$ and $h\in H\setminus\sigma(\frm)$ the maps \eqref{eq.mmaps} of $R/\frm$-vector spaces are injective.
\item \label{inj2} We have 
$\dim_{R/\frm}M_\frm=\dim_{R/\sigma(\frm)}M_{\sigma(\frm)}$.
\end{enumerate}
\end{proposition}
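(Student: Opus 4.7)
The plan is to exploit the two compositions of the maps in \eqref{eq.mmaps}, which by a short computation in the skew Laurent ring collapse to multiplication by elements of $R$ that act invertibly on the weight spaces in question.

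First, I would unpack the hypothesis $\sigma(\frm)\notin\css(B)$. By the definition of $\css(B)$ in \eqref{eq.SB}, this says $HJ\not\subseteq \sigma(\frm)$, and since $\sigma(\frm)$ is prime this is equivalent to $H\not\subseteq\sigma(\frm)$ and $J\not\subseteq\sigma(\frm)$. In particular, elements $j \in J\setminus\sigma(\frm)$ and $h\in H\setminus\sigma(\frm)$ as in the statement actually exist, and for any such pair $hj\notin\sigma(\frm)$ (again by primality), hence $\sigma\inv(hj)\notin\frm$.

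Next, I compute the two compositions of the maps in \eqref{eq.mmaps} directly in $R[t,t\inv;\sigma]$:
\[
(\sigma\inv(h)t\inv)(jt) = \sigma\inv(h)\sigma\inv(j) = \sigma\inv(hj),\qquad (jt)(\sigma\inv(h)t\inv) = j\,\sigma(\sigma\inv(h)) = hj.
\]
Therefore, on $M_\frm$ the composite $\sigma\inv(h)t\inv\circ jt$ acts as multiplication by $\sigma\inv(hj)\in R\setminus\frm$, and on $M_{\sigma(\frm)}$ the composite $jt\circ \sigma\inv(h)t\inv$ acts as multiplication by $hj\in R\setminus\sigma(\frm)$. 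Since $M_\frm$ is an $R/\frm$-module annihilated by $\frm$, the residue class of $\sigma\inv(hj)$ is a unit in the field $R/\frm$, so multiplication by $\sigma\inv(hj)$ is a bijection on $M_\frm$; similarly $hj$ acts bijectively on $M_{\sigma(\frm)}$. This forces each of the two maps in \eqref{eq.mmaps} to be injective (as a factor of an injective composition), proving \eqref{inj1}.

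For \eqref{inj2}, I would use that $\sigma$ induces a field isomorphism $R/\frm\xrightarrow{\sim} R/\sigma(\frm)$, and that both $jt$ and $\sigma\inv(h)t\inv$ are $\sigma$-semilinear; after transporting the $R/\sigma(\frm)$-structure on $M_{\sigma(\frm)}$ to an $R/\frm$-structure via $\sigma$, the two maps become honest linear injections in opposite directions between $R/\frm$-vector spaces. The weight-module hypothesis guarantees both spaces are finite dimensional, so the injections yield $\dim_{R/\frm} M_\frm \le \dim_{R/\sigma(\frm)} M_{\sigma(\frm)}$ and the reverse inequality, giving equality.

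The only substantive point requiring care is the bookkeeping between the two fields $R/\frm$ and $R/\sigma(\frm)$ in \eqref{inj2}; everything else reduces to the observation that $(\sigma\inv(h)t\inv)(jt)$ and $(jt)(\sigma\inv(h)t\inv)$ are degree-zero scalars from $R$ that avoid $\frm$ and $\sigma(\frm)$ respectively. No deeper input (noetherianity, simplicity, etc.) is needed.
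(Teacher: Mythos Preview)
Your proposal is correct and follows essentially the same approach as the paper: compute the compositions $(\sigma\inv(h)t\inv)(jt)=\sigma\inv(hj)$ and $(jt)(\sigma\inv(h)t\inv)=hj$, observe these act as units on the respective weight spaces, and conclude injectivity (hence the dimension equality using finite dimensionality). The paper's write-up is slightly terser---it computes only one composition and declares the other analogous---but the argument is the same.
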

\begin{proof}
\eqref{inj1} Let $j\in J\setminus\sigma(\frm)$, and let $w\in \ker(jt)$, so $(jt)w=0$. Since $\sigma(\frm)\not\in\css(B)$, $HJ\not\subseteq \sigma(\frm)$, so $H\not\subseteq\sigma(\frm)$ and there exists $h\in H\setminus\sigma(\frm)$. Then
\[ 0=(jt)w=\sigma\inv(h)t\inv (jt)w=\sigma\inv(hj)w.\]
Since $\sigma\inv(hj)\not\in\frm$, it is invertible in $R/\frm$, hence this implies that $w=0$ and the map $jt$ is injective. The argument for the map $\sigma\inv(h)t\inv$ is entirely analogous.

\eqref{inj2} Since $\sigma(\frm)\not\in\css(B)$, $H\not\subseteq \sigma(\frm)$ and $J\not\subseteq\sigma(\frm)$, hence there exist $j\in J\setminus\sigma(\frm)$, $h\in H\setminus\sigma(\frm)$. Then by \eqref{inj1}, the maps $jt$ and $\sigma\inv(h)t\inv$ are both injective, so the two vector spaces have the same dimension.
\end{proof}

\begin{corollary}\label{lem.breaks}
Let $M\in (B,R)\wmod$ and let $0\neq w\in M_{\frm}$.
\begin{itemize}
    \item If $B_1\cdot w=0$, then $\sigma(\frm)\in\css(B)$.
    \item If $B_{-1}\cdot w=0$, then $\frm\in\css(B)$.
\end{itemize} 
\end{corollary}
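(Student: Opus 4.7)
The plan is to prove the contrapositive of each assertion, so that Proposition~\ref{prop.inj-maps}\eqref{inj1} applies directly. Recall that $B_1 = Jt$ and $B_{-1} = \sigma\inv(H)t\inv$, so $B_{\pm 1}\cdot w = 0$ is equivalent to saying that every map of the form $jt$ (resp.\ $\sigma\inv(h)t\inv$) with $j\in J$ (resp.\ $h\in H$) kills $w$; consequently it suffices to exhibit one $j$ or $h$ for which the corresponding map sends $w$ to a nonzero element.

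For the first assertion, I would assume $\sigma(\frm)\notin \css(B)$, i.e.\ $HJ\not\subseteq \sigma(\frm)$. Since $\sigma(\frm)$ is prime, this forces $J\not\subseteq\sigma(\frm)$, so there exists $j\in J\setminus\sigma(\frm)$. Proposition~\ref{prop.inj-maps}\eqref{inj1} then tells us that $jt:M_{\frm}\sto M_{\sigma(\frm)}$ is injective, whence $jt\cdot w\neq 0$ and therefore $B_1\cdot w\neq 0$.

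For the second assertion I would perform a small index shift in order to line the hypotheses of Proposition~\ref{prop.inj-maps} up with the ideal $\frm$ we care about. Assuming $\frm\notin\css(B)$, set $\frm':=\sigma\inv(\frm)$, so that $\sigma(\frm')=\frm\notin\css(B)$; now applying Proposition~\ref{prop.inj-maps}\eqref{inj1} with $\frm'$ in place of $\frm$, and using that primality of $\frm$ forces $H\not\subseteq\frm$, we pick $h\in H\setminus \frm=H\setminus\sigma(\frm')$ and conclude that $\sigma\inv(h)t\inv:M_{\sigma(\frm')}=M_{\frm}\sto M_{\frm'}$ is injective. Hence $\sigma\inv(h)t\inv\cdot w\neq 0$, giving $B_{-1}\cdot w\neq 0$.

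There is no real obstacle here: the corollary is essentially a contrapositive repackaging of Proposition~\ref{prop.inj-maps}\eqref{inj1}. The only point that warrants care is the index shift in the second part, which is needed so that the proposition's hypothesis ``$\sigma(\frm)\notin\css(B)$'' corresponds to the desired statement about a weight vector lying in $M_{\frm}$ rather than in $M_{\sigma(\frm)}$.
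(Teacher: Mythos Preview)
Your proof is correct and is exactly the approach the paper takes: the paper's proof is the single sentence ``This follows by contrapositive from Proposition~\ref{prop.inj-maps}\eqref{inj1},'' and you have simply spelled that contrapositive out in full, including the index shift needed for the second bullet.
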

\begin{proof}
This follows by contrapositive from Proposition \ref{prop.inj-maps}\eqref{inj1}.
\end{proof}

\subsection{The infinite orbit case}\label{sec.inforb}

In this section we show that, when working with modules supported on infinite orbits, the simple weight modules for BR algebras have a similar structure to the ones for GWAs. In particular the simple modules are determined by their support, which is an interval between breaks within the orbit, and all the weight spaces are one-dimensional.

\begin{proposition}\label{prop.dim-weight}
Suppose that $\cO=\ZZ\cdot \frm \subset \Maxspec(R)$ is an infinite orbit and $M\in (B,R)\wmodO$.
\begin{enumerate}
    \item \label{dim1} If $w\in M_{\frm}$ is a weight vector and $N=B\cdot w\subseteq M$, then $\dim_{R/\sigma^k(\frm)}N_{\sigma^k(\frm)}\leq 1$ for all $k\in \ZZ$.
    \item \label{dim2} If $M$ is simple, then 
    $\dim_{R/\sigma^k(\frm)}M_{\sigma^k(\frm)}\leq 1$ for all $k\in \ZZ$.
\end{enumerate}
\end{proposition}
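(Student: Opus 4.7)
My plan is to reduce the claim to bounding the graded pieces $B_k w$, then induct on $|k|$ with the commutativity of $R$ providing the crucial single-dimensional collapse. First, since the orbit $\cO$ is infinite, the maximal ideals $\sigma^k(\frm)$ for $k \in \ZZ$ are pairwise distinct, so the weight spaces $M_{\sigma^k(\frm)}$ sit in direct sum within $M$. Combined with $N = Bw = \sum_{k} B_k w$ and $B_k w \subseteq M_{\sigma^k(\frm)}$, this forces $N = \bigoplus_k B_k w$ and $N_{\sigma^k(\frm)} = B_k w$, reducing part~(1) to $\dim_{R/\sigma^k(\frm)} B_k w \leq 1$. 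The identity $B_{k+1} = B_1 B_k$ (which follows from the relation $I^{(k+1)} = J\sigma(I^{(k)})$ used in the proof of Lemma~\ref{lem.props}\eqref{gprops1}), together with its mirror $B_{k-1} = B_{-1} B_k$, enables induction on $|k|$ from the base case $B_0 w = Rw$ (at most one-dimensional over $R/\frm$). If by induction $B_k w$ is the $R/\sigma^k(\frm)$-span of a weight vector $v$, then the relation $(jt)(rv) = j\sigma(r)tv$ and the ideal property of $J$ give $B_{k+1} w = B_1 \cdot Rv = Jtv = B_1 v$, so the inductive step reduces to a key lemma: $\dim_{R/\sigma(\frn)}(B_1 v) \leq 1$ for any weight vector $v \in M_\frn$ (and analogously for $B_{-1}v$).

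To prove the key lemma I would exploit the commutativity of $R$. For any $j_1, j_2 \in J$, the identity $j_1 j_2 = j_2 j_1$ in $R$ translates to $\overline{j_1}(j_2 tv) = \overline{j_2}(j_1 tv)$ inside the $R/\sigma(\frn)$-vector space $M_{\sigma(\frn)}$, where overlines denote reduction modulo $\sigma(\frn)$. If $J \not\subseteq \sigma(\frn)$, I pick $j_0 \in J \setminus \sigma(\frn)$, so that $\overline{j_0}$ is a unit in the field $R/\sigma(\frn)$; solving then yields $jtv = (\overline{j}/\overline{j_0})(j_0 tv)$ for every $j \in J$, so $B_1 v = Jtv$ is the $R/\sigma(\frn)$-span of $j_0 tv$ and is at most one-dimensional. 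The argument bounding $B_{-1} v$ is entirely symmetric, using $H$ in place of $J$. Part~(2) follows at once from part~(1): if $M$ is simple, then $M = Bw$ for any nonzero weight vector $w \in M_\frm$, and (1) applied with $N = M$ yields the desired bound on $\dim_{R/\sigma^k(\frm)} M_{\sigma^k(\frm)}$.

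The main obstacle is the break case in which $J \subseteq \sigma(\frn)$, where the commutativity identity degenerates to $0 = 0$ and no choice of $j_0$ is available. Here I would instead exploit the vanishing $B_{-1}(B_1 v) = \sigma^{-1}(HJ)\,v = 0$ (which holds because $HJ \subseteq \sigma(\frn)$ forces $\sigma^{-1}(HJ) \subseteq \frn$, and $\frn$ annihilates $v$), combined with the cyclic generation $N = Bw$: a nonzero element $v' \in B_1 v$ would generate a $B$-submodule of $N$ that cannot descend to $w$ via $B_{-1}$, forcing $B_1 v = 0$. This is the delicate step where one must use that $w$ truly generates $N$, and it is the place where the infinite-orbit geometry (preventing distinct graded pieces from recombining) is indispensable.
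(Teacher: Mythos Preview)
Your approach in the non-break case is essentially the paper's: reduce to $N_{\sigma^k(\frm)} = B_k w$ via the infinite-orbit hypothesis, then induct on $|k|$ using the commutativity relation $j'(jtu) = j(j'tu)$ to force $\dim B_1 u \leq 1$. The paper phrases the collapse as ``$u_1'$ and $u_1$ are linearly dependent over $R$, and \ldots it descends to $R/\sigma^{k+1}(\frm)$,'' which is exactly your observation that $jtv = (\overline{j}/\overline{j_0})(j_0 tv)$ once some $j_0 \in J \setminus \sigma(\frn)$ is available.

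You are right to flag the break case $J \subseteq \sigma(\frn)$ as the obstacle, but your proposed fix does not work. The vanishing $B_{-1}(B_1 v) = 0$ together with $N = Bw$ only yields $Bv' \subsetneq N$ for each nonzero $v' \in B_1 v$; since $N$ is not assumed simple this places no bound on $\dim B_1 v$. In fact part~\eqref{dim1} is \emph{false} as stated. Take $R = \kk[x,y]$ with $\operatorname{char}\kk = 0$, $\sigma(x) = x+1$, $\sigma(y) = y+1$, $H = J = (x,y)$, and $\frm = (x-1,y-1)$, so that $\sigma(\frm) = (x,y) = J$. Then $M := B/B\frm$ is a weight module in $(B,R)\wmodO$ (each graded piece $I^{(k)}/I^{(k)}\sigma^k(\frm)$ is annihilated by $\sigma^k(\frm)$ and finite-dimensional over it), and with $w = \overline{1} \in M_\frm$ and $N = Bw = M$ one computes
\[
N_{\sigma(\frm)} \;=\; (B/B\frm)_1 \;\cong\; J\big/J\sigma(\frm) \;=\; (x,y)\big/(x,y)^2,
\]
which is two-dimensional over $R/\sigma(\frm)$. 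The paper's own argument shares this gap: the descent of the relation $j'u_1 = ju_1'$ from $R$ to $R/\sigma^{k+1}(\frm)$ becomes vacuous precisely when $j,j' \in \sigma^{k+1}(\frm)$. Part~\eqref{dim2}, however, survives: for simple $M$ one can invoke Lemma~\ref{lem.actzero} to get $jt\cdot w = 0$ for every $j \in J \cap \sigma(\frm)$, so in the break case $B_1 w = 0$ outright, and your non-break inductive step handles all remaining weights.
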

\begin{proof}
\eqref{dim1}
If $w=0$, then $B\cdot w=0$, so the statement is true. If $w\neq 0$, since $w\in M_{\frm}$, then $\frm \cdot w=0$, so $B_0\cdot w=R\cdot w=(R/\frm)\cdot w$. Notice that for all $k\in\ZZ$, we have $B_k w\subseteq N_{\sigma^k(\frm)}$ and, since $\cO$ is an infinite orbit, $\sigma^k(\frm)\neq \sigma^{n}(\frm)$ for $k\neq n$. Then $B_k w=N_{\sigma^k(\frm)}$ for all $k\in\ZZ$ and $B_k w\cap B_n w=0$ for $k\neq n$.
It follows that 
\[N_\frm=(B w)_\frm=(B_0 w)_\frm\simeq (R/\frm) w\]
is one-dimensional over $R/\frm$. So the statement is proved for $k=0$. Now we proceed by induction in both directions. Suppose that the statement is true for $k\in \ZZ$. If $\dim_{R/\sigma^k(\frm)}N_{\sigma^k(\frm)}=0$, then $N_{\sigma^k(\frm)}=0$ and for all $n\geq k$, we have $N_{\sigma^n(\frm)}=B_{n-k}N_{\sigma^k(\frm)}=0$. If $\dim_{R/\sigma^k(\frm)}N_{\sigma^k(\frm)}=1$, let $N_{\sigma^k(\frm)}=R/\sigma^k(\frm)\cdot u$. If $N_{\sigma^{k+1}(\frm)}=B_1N_{\sigma^{k}(\frm)}=0$, then the statement is true, otherwise let $u_1, u'_1\in N_{\sigma^{k+1}(\frm)}=B_1N_{\sigma^{k}(\frm)}$, then there exist $j,j'\in J$ be such that $u_1:=jt\cdot u$, $u'_1:=j't\cdot u$.
It follows that
\[j' u_1-ju'_1= j'jt u-jj'tu=0,\]
hence $u'_1$ and $u_1$ are linearly dependent over $R$, and since $u'_1, u_1\in N_{\sigma^{k+1}(\frm)}$, it descends to being linearly dependent over $R/\sigma^{k+1}(\frm)$. 

This shows that in all cases $\dim_{R/\sigma^{k+1}(\frm)}(N_{\sigma^{k+1}(\frm)})\leq 1$. By induction, $\dim_{R/\sigma^{k}(\frm)}(N_{\sigma^{k}(\frm)})\leq 1$ for all $k\geq 0$. With an analogous reasoning, using the action of $B_{-1}$, $\dim_{R/\sigma^{k}(\frm)}(N_{\sigma^{k}(\frm)})\leq 1$ for all $k\leq 0$, and the result is proved.

\eqref{dim2} For any nonzero weight vector $0\neq w\in M$, we have $B\cdot w= M$ because $M$ is simple. The result then follows from part \eqref{dim1}.
\end{proof}

Let $M\in (B,R)\wmodO$, with $\cO=\ZZ\cdot \frm$ an infinite orbit. Then $M$ is a $\ZZ$-graded module for $B$, by defining $M_i=M_{\sigma^i(\frm)}$ for all $i\in\ZZ$. Clearly this grading is non-canonical, as we can choose any point in the orbit as our starting point. Choosing different starting points recovers the shifts of the modules in the graded module category.

Not all $\ZZ$-graded $B$-modules are $R$-weight modules. For example, $B$ itself is a $\ZZ$-graded $B$-module which is not an $R$-weight module. For a less trivial example, take $R=\CC[z_1,z_2]$, $\sigma(z_1)=z_1-1$, $\sigma(z_2)=z_2+1$, $H=(z_1-z_2)$, $J=(z_1-z_2-2)$ and let $\bp=H=(z_1-z_2)\in \Spec(R)$. Then $\bp\in\css(B)$ and $\sigma(\bp)=J\in\css(B)$, so we can define a graded module $(R/\bp)v_0\oplus (R/\sigma(\bp))v_1$ as in the proof of Proposition \ref{prop.props}\eqref{props2}. But notice that this is not a weight module because $\CC[h_1,h_2]/(h_1-h_2)$ does not decompose as a direct sum of weight spaces.

However, when we restrict to simple objects we do have a correspondence between weight modules and graded modules.

\begin{proposition}
Let $M$ be a simple $\ZZ$-graded $B$-module. Then $M$ is an $R$-weight module.
\end{proposition}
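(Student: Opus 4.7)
The plan is to prove that for a simple $\ZZ$-graded $B$-module $M$, each nonzero graded piece $M_k$ is already a simple $R$-module, which by the commutativity of $R$ forces $M_k \iso R/\frm_k$ for some $\frm_k \in \Maxspec(R)$. Consequently every homogeneous element is a weight vector, all weights lie in one $\sigma$-orbit, and in the infinite-orbit setting of this section the weights are pairwise distinct and each weight space is one-dimensional, giving the weight module decomposition.

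Write $M = \bigoplus_{k \in \ZZ} M_k$ for the grading; each $M_k$ is an $R = B_0$-submodule. Pick a nonzero homogeneous $m \in M_k$; graded simplicity gives $Bm = M$, so $M_\ell = B_{\ell - k} m$ for every $\ell$. The key step is the following: every nonzero $M_\ell$ is simple as an $R$-module. Indeed, given a nonzero $R$-submodule $N \subseteq M_\ell$, the $B$-submodule $BN = \sum_i B_i N$ satisfies $B_i N \subseteq M_{\ell + i}$, so it is graded. By simplicity $BN = M$, and projecting onto the degree-$\ell$ component gives
\[ M_\ell = (BN)_\ell = B_0 N = RN = N. \]
Hence $M_\ell$ has no proper nonzero $R$-submodules, so $M_\ell \iso R/\frm_\ell$ for some $\frm_\ell \in \Maxspec(R)$ and $M_\ell$ is a one-dimensional $\frm_\ell$-weight space.

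Next I would relate consecutive weights. Any element of $M_{\ell + 1} = B_1 \cdot M_\ell$ can be written as $jt \cdot m'$ with $j \in J$ and $m' \in M_\ell$; using the relation $rjt = jt \sigma\inv(r)$ in $B$,
\[ r \cdot (jt \cdot m') = (jt \sigma\inv(r)) \cdot m' = jt \cdot (\sigma\inv(r) m'). \]
If $r \in \sigma(\frm_\ell)$, then $\sigma\inv(r) \in \frm_\ell$ annihilates $m'$, so $\sigma(\frm_\ell) \subseteq \frm_{\ell + 1}$; maximality forces equality. Hence every $\frm_\ell$ (for $\ell$ in the support of $M$) lies in the single $\sigma$-orbit through $\frm_k$.

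Since the section is devoted to the infinite-orbit case, this orbit is infinite, so the $\frm_\ell$'s are pairwise distinct. Each $M_\frm$ therefore equals $M_\ell$ for the unique $\ell$ (if any) with $\frm_\ell = \frm$, and is at most one-dimensional over $R/\frm$; the decomposition $M = \bigoplus_\frm M_\frm$ then exhibits $M$ as an $R$-weight module. The main obstacle is the second step, identifying each $M_\ell$ as a simple $R$-module. The subtlety is that $(BN)_\ell$ receives contributions from $B_i N$ only when $i = 0$, and it is precisely this clean interaction between the grading and the $R$-action that makes the extension-and-projection argument collapse $N$ back to itself.
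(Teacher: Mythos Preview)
Your argument is correct and follows the same route as the paper: prove each nonzero graded piece $M_\ell$ is $R$-simple via the graded-submodule trick $(BN)_\ell = B_0 N = N$, identify $M_\ell \cong R/\frm_\ell$, and then show the annihilators form a single $\sigma$-orbit. Two small remarks: elements of $B_1 M_\ell$ are \emph{sums} of terms $jt\cdot m'$ rather than single ones, but your annihilator computation is linear so this is harmless; and your explicit appeal to the infinite-orbit hypothesis to guarantee $\dim_{R/\frm} M_\frm < \infty$ is in fact more careful than the paper's own proof, which concludes $M=\bigoplus_j M_{\sigma^{j-i}(\frm)}$ without addressing the finite-dimensionality clause in Definition~\ref{defn.wtmod}.
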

\begin{proof}
By assumption $M=\bigoplus_{i\in\ZZ}M_i$, with $B_jM_i\subseteq M_{i+j}$. Let $i\in\ZZ$ such that $M_i\neq 0$. Then $RM_i=B_0M_i\subseteq M_i$, so $M_i$ is an $R$-module. Suppose $M_i$ is not simple as an $R$-module, and let $0\neq N_i\subsetneq M_i$ be a nontrivial $R$-submodule.  Since $(BN_i)_i=N_i$, then $0\neq BN_i\subsetneq M$ so $BN_i$ is a nontrivial $B$-submodule of $M$, which is a contradiction. Hence $M_i$ is a simple $R$-module, so $M_i\simeq R/\frm$ as $R$-modules for some $\frm\in\Maxspec(R)$. 

Since $M$ is a simple module, $M=BM_i$. Hence, for all $j\in \ZZ$, we have $M_j=B_{j-i}M_i$. If $r\in \sigma^{j-i}(\frm)$, then
\[rM_j=rB_{j-i}M_i=B_{j-i}\sigma^{i-j}(r)M_i=0\]
because $\sigma^{i-j}(r)\in\frm$. It follows that $M_j\subseteq M_{\sigma^{j-i}(\frm)}$ and therefore
\[M=\bigoplus_{j\in \ZZ}M_j\subseteq \bigoplus_{j\in\ZZ}M_{\sigma^{j-i}(\frm)}\subseteq M,\]
which implies that the inclusions are actually equalities and indeed $M=\bigoplus_{j\in\ZZ}M_{\sigma^{j-i}(\frm)}$ is a weight module.
\end{proof}

\begin{lemma}\label{lem.actzero}
Let $\cO=\ZZ\cdot\frm \in\Maxspec(R)/\ZZ$ be an infinite orbit. Let $M$ be a simple weight module, and $0\neq w\in M_{\frm}$ a weight vector. If $j\in J\cap\sigma(\frm)$, then $jt\cdot w=0$. Analogously, if $h\in H\cap\frm$, then $\sigma\inv(h)t\inv\cdot w=0$. 
\end{lemma}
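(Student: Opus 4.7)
My plan is to prove the first claim by contradiction; the second is entirely symmetric, with the roles of $B_1$/$B_{-1}$ and of $J$/$H$ interchanged. Suppose $j \in J \cap \sigma(\frm)$ and set $v := jt \cdot w$; assume toward a contradiction that $v \neq 0$. Since $v \in M_{\sigma(\frm)}$, the idea is to show that $B_{-1} \cdot v = 0$ and then use the simplicity of $M$ to derive a contradiction with $0 \neq w \in M_\frm$.

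The computation of $B_{-1} \cdot v$ is immediate: for any $h \in H$, unwinding the skew-Laurent relations gives
\[ \sigma\inv(h)t\inv \cdot v = \sigma\inv(h)t\inv \cdot jt \cdot w = \sigma\inv(hj) \cdot w. \]
Since $j \in \sigma(\frm)$, we have $\sigma\inv(hj) = \sigma\inv(h)\sigma\inv(j) \in \frm$, so the expression vanishes. Hence $B_{-1} \cdot v = 0$. Using the identity $B_{-k} = (B_{-1})^k$ inside $B$, which follows from $\sigma\inv(H)t\inv \cdot \sigma\inv(H)t\inv = \sigma\inv(H)\sigma^{-2}(H)t^{-2} = I^{(-2)}t^{-2}$ and induction, this propagates to $B_{-k} \cdot v = 0$ for every $k \geq 1$.

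By simplicity of $M$ and $v \neq 0$, we have $M = Bv$, so the preceding step gives $M = \sum_{n \geq 0} B_n v$. Since $B_n v \subseteq M_{\sigma^{n+1}(\frm)}$, we conclude
\[ \Supp_R(M) \subseteq \{\sigma^{n+1}(\frm) : n \geq 0\}. \]
But $0 \neq w \in M_\frm$ forces $\frm \in \Supp_R(M)$, hence $\frm = \sigma^{n+1}(\frm)$ for some $n \geq 0$, contradicting the hypothesis that $\cO = \ZZ \cdot \frm$ is an infinite orbit. The only mildly subtle point is the factorization $B_{-k} = (B_{-1})^k$, which I would verify by induction from Definition~\ref{defn.BR} before invoking; everything else is routine unwinding of the skew-Laurent relations and the $\sigma$-equivariance of the weight-space action.
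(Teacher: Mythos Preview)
Your proof is correct and follows essentially the same route as the paper's: compute $B_{-1}\cdot v=0$ from the skew--Laurent relation, then use simplicity and the infinite orbit to reach a contradiction with $0\neq w\in M_\frm$. The only difference is cosmetic: the paper observes directly that $(Bv)_{\frm}=B_{-1}v=0$ (since on an infinite orbit the sole graded piece landing in $M_\frm$ is $B_{-1}v$), whereas you first propagate $B_{-k}v=0$ via $B_{-k}=(B_{-1})^k$ and then read off the support; the extra factorization is true but not strictly necessary.
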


\begin{proof}
Let $j \in J \cap \sigma(\frm)$ and suppose that $w_1:=jt\cdot w\neq 0$. For $h \in H$ we have 
\[\sigma\inv(h)t\inv\cdot w_1=\sigma\inv(h)t\inv jt\cdot w=\sigma\inv(h)\sigma\inv(j)\cdot w=0.\]
The last equality follows because $j\in\sigma(\frm)$, so $\sigma\inv(j)\in\frm$. Since $\cO$ is an infinite orbit, $(Bw_1)_\frm=B_{-1}w_1=0$, so $w\not\in B_{-1}w_1$. Hence $w\not\in B w_1$ and so $0\neq B w_1\subsetneq M$ is a nontrivial proper weight submodule. This contradicts the simplicity of $M$. 
The second part is similar.
\end{proof}

Recall the definition of a \emph{break} given in Definition \ref{defn.breaks}. The next two lemmas define the weight modules that appear in our classification in the infinite orbit case.

\begin{lemma}
Let $\cO\in\Maxspec(R)/\ZZ$ be an infinite orbit with no breaks. The $R$-module
\[M(\cO)=\bigoplus_{\frm\in\cO}\left(R/\frm\right)v_{\frm}\]
is a simple $B$-weight module with $B$-action defined by
\[jt\cdot v_\frm= j v_{\sigma(\frm)},\qquad  \sigma\inv(h)t\inv \cdot v_\frm= \sigma\inv(h) v_{\sigma\inv(\frm)},\]
for all $j\in J$ and $h\in H$.
\end{lemma}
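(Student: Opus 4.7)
The plan is to first give a uniform formula for the $B$-action that makes well-definedness and associativity transparent, and then to establish simplicity by a direct propagation argument that uses the absence of breaks to guarantee each transition map between adjacent weight spaces is nonzero.

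For the module structure, I would uniformly define, for any homogeneous element $at^k \in I^{(k)}t^k = B_k$ and any $rv_\frm \in M_\frm$,
\[ (at^k) \cdot (r v_\frm) \;:=\; a\,\sigma^k(r)\, v_{\sigma^k(\frm)} \in (R/\sigma^k(\frm))\, v_{\sigma^k(\frm)}. \]
Well-definedness on the quotient $R/\frm$ is immediate: if $r \in \frm$ then $\sigma^k(r)\in \sigma^k(\frm)$, so the right-hand side vanishes. Associativity is one computation: for $b = at^k$ and $b' = a't^\ell$, both $(bb') \cdot (rv_\frm)$ and $b \cdot (b' \cdot (rv_\frm))$ equal $a\sigma^k(a')\sigma^{k+\ell}(r)\, v_{\sigma^{k+\ell}(\frm)}$. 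Specializing to $k=1$, $a = j\in J$, and to $k=-1$, $a = \sigma^{-1}(h)$ with $h \in H$, recovers the formulas in the statement. Thus the prescription does define a $B$-action on $M(\cO)$. By construction $M(\cO) = \bigoplus_{\frm \in \cO} M_\frm$ with each $M_\frm = (R/\frm) v_\frm$ one-dimensional over $R/\frm$, so $M(\cO) \in (B,R)\wmodO$.

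For simplicity, let $N \neq 0$ be a $B$-submodule of $M(\cO)$. Since $N$ is stable under $R$ and the weight spaces of $M(\cO)$ are annihilated by pairwise coprime maximal ideals, a standard argument shows $N$ inherits the weight decomposition $N = \bigoplus_{\frm \in \cO}(N \cap M_\frm)$; pick $\frm$ with $N \cap M_\frm \neq 0$, and observe that one-dimensionality of $M_\frm$ over $R/\frm$ forces $v_\frm \in N$. The no-breaks hypothesis says $\sigma(\frm') \notin \css(B)$ for every $\frm' \in \cO$, which, since $\css(B)$ is the locus of primes containing $HJ$ and $\sigma(\frm')$ is prime, is equivalent to $H \not\subseteq \frm''$ and $J \not\subseteq \frm''$ for every $\frm'' \in \cO$. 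Choose $j \in J \setminus \sigma(\frm)$; then $jt \cdot v_\frm = j\, v_{\sigma(\frm)}$ is nonzero in the one-dimensional space $M_{\sigma(\frm)}$, so $v_{\sigma(\frm)} \in N$. Iterating this, and running the symmetric argument with $\sigma^{-1}(h)t^{-1}$ for $h \in H \setminus \frm$ to move in the negative direction, yields $v_{\sigma^k(\frm)} \in N$ for every $k \in \ZZ$, whence $N = M(\cO)$.

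The main obstacle is the bookkeeping for well-definedness: one must track how $jt$ twists the $R$-coefficient by $\sigma$ when moving between weight spaces, and the uniform formula above is precisely what makes this clean. After that, simplicity reduces to the one-dimensionality of each weight space together with the fact that the no-breaks hypothesis supplies, for every $\frm \in \cO$, an element of $J$ outside $\sigma(\frm)$ and an element of $H$ outside $\frm$ that produce nonzero transition maps.
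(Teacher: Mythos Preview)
Your proof is correct and follows essentially the same approach as the paper. The paper's own proof is extremely terse---it defers well-definedness to the analogous computation in Proposition~\ref{prop.props}\eqref{props2} and simply asserts that simplicity is ``clear from the definition of the $B$-action''---so your version is considerably more explicit. Your uniform formula $(at^k)\cdot(rv_\frm)=a\sigma^k(r)v_{\sigma^k(\frm)}$ is a clean way to handle well-definedness and associativity in one stroke, whereas the paper (in the cited proposition) checks the generator relations individually; and your propagation argument for simplicity spells out exactly the one-dimensionality plus no-breaks mechanism that the paper leaves implicit.
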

\begin{proof}
The fact that the action of $B$ on the module $M(\cO)$ 
is well defined can be verified exactly in the same way as in the proof of Proposition \ref{prop.props}\eqref{props2}. Furthermore, it is clear from the definition of the $B$-action that $M(\cO)$ is simple. 
\end{proof}

Let $\cO \in \Maxspec(R)/\ZZ$ be an infinite orbit and let $\emptyset\neq \beta\subseteq \cO$ be the set of breaks. We can define an order on $\cO$ by $\frm <\sigma(\frm)$ for all $\frm\in\cO$. Set 
\[
\beta' = \begin{cases}
\beta\cup\{\infty\} 
    & \text{if $\beta$ contains a maximal element} \\
\beta 
    & \text{otherwise.}
\end{cases}\]
For $\frn\in\beta'$, we let $\frn^-$ be the maximal element of $\beta'$ such that $\frn^-<\frn$, or $\frn^-=-\infty$ if $\frn$ was a minimal element of $\beta'$. We extend the order in $\cO$ to $\cO\cup\{\pm\infty\}$ in the obvious way.

\begin{lemma}
Let $\cO\in\Maxspec(R)/\ZZ$ be an infinite orbit, let $\emptyset\neq\beta \subseteq \cO$ be the set of breaks, and keep the notation above. For $\frn \in \beta'$, the $R$-module
\[M(\cO,\frn)=\bigoplus_{\frm\in\cO,~ \frn^-<\frm\leq\frn}\left(R/\frm\right)v_{\frm}\qquad (\text{if $\frn=\infty$, the second inequality is strict)}\]
is a simple $B$-weight module with $B$-action defined by 
\[
jt\cdot v_\frm=
\begin{cases}
j v_{\sigma(\frm)} & \text{ if }\frm<\frn \\ 
0 & \text{ if }\frm=\frn,
\end{cases}
\qquad 
\sigma\inv(h)t\inv\cdot v_\frm=
\begin{cases} \sigma\inv(h) v_{\sigma\inv(\frm)} & \text{ if }\sigma\inv(\frm)>\frn^- \\ 
0 & \text{ if }\sigma\inv(\frm)=\frn^-,
\end{cases}\]
for all $j\in J$ and $h\in H$.
\end{lemma}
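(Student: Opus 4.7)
My plan is to prove well-definedness of the $B$-action and simplicity of $M(\cO,\frn)$ separately. For well-definedness, by Lemma~\ref{lem.props}\eqref{gprops2} it suffices to verify that the prescribed action of the generators $R$, $Jt$, and $\sigma\inv(H)t\inv$ of $B$ respects the defining relations in the skew Laurent ring $R[t,t\inv;\sigma]$. The routine compatibilities, namely annihilation of $r\in\frm$ on $v_{\frm}$ and the commutation rules $tr=\sigma(r)t$, go through exactly as in the proof of Proposition~\ref{prop.props}\eqref{props2}. The two genuinely new checks occur at the boundary points where the action is truncated. At $\frm=\frn$, the identity $(\sigma\inv(h)t\inv)(jt)=\sigma\inv(hj)$ forces $\sigma\inv(hj)\cdot v_{\frn}=0$, which holds because $\frn\in\beta$ gives $\sigma(\frn)\in\css(B)$, so $hj\in HJ\subseteq\sigma(\frn)$ and $\sigma\inv(hj)\in\frn$. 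At $\sigma\inv(\frm)=\frn^-$, the identity $(jt)(\sigma\inv(h)t\inv)=hj$ forces $hj\cdot v_{\frm}=0$, which holds because $\frn^-\in\beta$ gives $\sigma(\frn^-)=\frm\in\css(B)$, so $hj\in HJ\subseteq\frm$.

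For simplicity, let $0\neq N\subseteq M(\cO,\frn)$ be a $B$-submodule. Since $M(\cO,\frn)$ is a weight module, so is $N$, hence $N_{\frm}\neq 0$ for some $\frm$ in the support. Because $M(\cO,\frn)_{\frm}=(R/\frm)v_{\frm}$ is one-dimensional over $R/\frm$, any nonzero element of $N_{\frm}$ has the form $cv_{\frm}$ with $c\in R\setminus\frm$; choosing $r\in R$ with $rc\equiv 1\pmod{\frm}$ yields $v_{\frm}=r(cv_{\frm})\in N$. It remains to show that $v_{\frm'}\in N$ for every $\frm'$ in the support.

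The key observation driving this last step is that for $\frm'\in\cO$ with $\frn^-<\frm'<\frn$, $\frm'$ is not a break, and if additionally $\sigma\inv(\frm')>\frn^-$ then $\sigma\inv(\frm')$ is not a break either; in both cases a break strictly between $\frn^-$ and $\frn$ would contradict the maximality of $\frn^-$ among elements of $\beta'$ strictly below $\frn$. Unpacked, these two facts read $J\not\subseteq\sigma(\frm')$ and $H\not\subseteq\frm'$, so we may choose $j\in J\setminus\sigma(\frm')$ and $h\in H\setminus\frm'$. Then $jt\cdot v_{\frm'}=jv_{\sigma(\frm')}$ and $\sigma\inv(h)t\inv\cdot v_{\frm'}=\sigma\inv(h)v_{\sigma\inv(\frm')}$ are nonzero, and scaling by the appropriate inverses in $R/\sigma(\frm')$ and $R/\sigma\inv(\frm')$ produces $v_{\sigma(\frm')},\,v_{\sigma\inv(\frm')}\in N$. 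Iterating $jt$ therefore ascends from $v_{\frm}$ up to $v_{\frn}$, and iterating $\sigma\inv(h)t\inv$ descends down to $v_{\sigma(\frn^-)}$, exhausting the entire support; the cases $\frn=\infty$ or $\frn^-=-\infty$ simply render the corresponding stopping condition vacuous. I expect the boundary-compatibility checks in the first paragraph to be the main technical obstacle, since this is exactly where the defining property of a break is consumed.
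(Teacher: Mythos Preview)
Your proof is correct and follows essentially the same approach as the paper. The paper's own proof is a single sentence deferring well-definedness to the computation in Proposition~\ref{prop.props}\eqref{props2} and declaring simplicity ``clear''; you have simply unpacked both of these, correctly identifying that the only nontrivial checks for well-definedness are the two boundary relations consuming $\frn,\frn^-\in\beta$, and correctly arguing simplicity via the absence of breaks strictly between $\frn^-$ and $\frn$.
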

\begin{proof}
Again, it is clear that $M(\cO,\frn)$ is simple and the action is well defined, as in Proposition \ref{prop.props}\eqref{props2}, because $\frn$ and $\frn^-$ are breaks.
\end{proof}

\begin{theorem}\label{thm.inf-orb-class}
Let $\cO\in\Maxspec(R)/\ZZ$ be an infinite orbit and let $\beta \subseteq \cO$ be the set of breaks.
\begin{enumerate}
    \item \label{inf1} If $\beta=\emptyset$, then up to isomorphism, $M(\cO)$ is the unique simple module in $(B,R)\wmodO$.
    \item \label{inf2} If $\beta \neq \emptyset$, then $\{M(\cO,\frn)\}_{\frn\in\beta'}$ is a complete list of isomorphism classes of simple modules in $(B,R)\wmodO$.
\end{enumerate}
\end{theorem}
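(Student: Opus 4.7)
The plan is to classify simple $M \in (B,R)\wmodO$ in two stages: identify the possible supports $\Supp_R M \subseteq \cO$, then show that a simple module is determined up to isomorphism by its support. Fix $\frm \in \Supp_R M$ and nonzero $w \in M_\frm$; by simplicity $M = Bw$, so $M_{\sigma^k(\frm)} = B_k w$ for every $k \in \ZZ$, and each weight space has dimension at most $1$ by Proposition~\ref{prop.dim-weight}.

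To see $\Supp_R M$ is an interval in $\cO$ (ordered by $\sigma$), suppose $i < k$ with $\sigma^i(\frm), \sigma^k(\frm) \in \Supp_R M$. Using $I^{(k-i)} = J\sigma(J)\cdots \sigma^{k-i-1}(J)$, each $a \in I^{(k-i)}$ is a sum of products $j^{(q)}_0 \sigma(j^{(q)}_1) \cdots \sigma^{k-i-1}(j^{(q)}_{k-i-1})$, so $at^{k-i}$ is a corresponding sum of $(j^{(q)}_0 t)(j^{(q)}_1 t) \cdots (j^{(q)}_{k-i-1} t) \in B$. Nonvanishing of $B_{k-i}w_i$ for $0 \neq w_i \in M_{\sigma^i(\frm)}$ forces some such product to act nontrivially on $w_i$; since nonvanishing propagates through its successive factors (applied right to left), every intermediate space $M_{\sigma^j(\frm)}$, $i \leq j \leq k$, is nonzero. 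For the boundaries: if $\sigma^{k_+}(\frm) = \max\Supp_R M$, then $B_1$ annihilates any $0 \neq w_+ \in M_{\sigma^{k_+}(\frm)}$, so Corollary~\ref{lem.breaks} gives that $\sigma^{k_+}(\frm)$ is a break; analogously, if $\sigma^{k_-}(\frm) = \min\Supp_R M$, then $\sigma^{k_--1}(\frm)$ is a break. Moreover, any break $\frn' \in \Supp_R M$ must be the maximum of $\Supp_R M$: since $\sigma(\frn') \in \css(B)$, either $J \subseteq \sigma(\frn')$ (then Lemma~\ref{lem.actzero} forces $B_1 w' = 0$ for $0 \neq w' \in M_{\frn'}$, so $M_{\sigma(\frn')} = 0$), or $H \subseteq \sigma(\frn')$ (then $B_{-1}$ annihilates $M_{\sigma(\frn')}$, so $M_{\frn'} = 0$). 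Combining these, $\Supp_R M = \cO$ when $\beta = \emptyset$, or $\Supp_R M = (\frn^-,\frn]$ for some $\frn \in \beta'$ when $\beta \neq \emptyset$ (the value $\frn = \infty$ corresponds to the unbounded-above case, which forces $\beta$ to have a maximum equal to $\frn^-$).

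For uniqueness I match annihilators. Define the $R$-linear map $\psi_k \colon I^{(k)} \to M_{\sigma^k(\frm)}$ by $\psi_k(a) = at^k w$. Using $at^k = t^k \sigma^{-k}(a)$ in $R[t,t^{-1};\sigma]$ and $\frm w = 0$ gives $I^{(k)} \cap \sigma^k(\frm) \subseteq \ker\psi_k$. When $\sigma^k(\frm) \in \Supp_R M$, $\psi_k$ surjects onto the $1$-dimensional $R/\sigma^k(\frm)$-space $M_{\sigma^k(\frm)}$, and since $R/\sigma^k(\frm)$ is a field the induced surjection $I^{(k)}/(I^{(k)} \cap \sigma^k(\frm)) \twoheadrightarrow M_{\sigma^k(\frm)}$ must be an isomorphism, so $\ker\psi_k = I^{(k)} \cap \sigma^k(\frm)$. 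When $\sigma^k(\frm) \notin \Supp_R M$, $\psi_k = 0$ and $\ker\psi_k = I^{(k)}$. The identical computation on the cyclic vector $v_\frm$ in $M(\cO,\frn)$ (or $M(\cO)$) yields the same kernel structure, so $\Ann_B(w) = \Ann_B(v_\frm)$ and $M \iso B/\Ann_B(w) \iso M(\cO,\frn)$ (or $M(\cO)$). Finally, distinct $\frn \in \beta'$ yield modules with distinct supports, hence pairwise non-isomorphic.

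The main obstacle is establishing the interval (\emph{no holes}) structure of $\Supp_R M$ via the propagation of nonvanishing through products of $(j_s t)$ operators, together with the exclusion of interior breaks via Lemma~\ref{lem.actzero} applied in both the ascending and descending directions; once these support constraints are in place, the annihilator matching is essentially forced by $R/\sigma^k(\frm)$ being a field.
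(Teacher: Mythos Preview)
Your overall strategy---determining the support first and then matching annihilators of cyclic generators---is sound and genuinely different from the paper's approach, which instead constructs an explicit isomorphism $M\to M(\cO)$ (or $M(\cO,\frn)$) by inductively choosing normalized weight vectors $v_{\sigma^k(\frm)}$ so that $jt\cdot v_{\sigma^k(\frm)}=j\,v_{\sigma^{k+1}(\frm)}$. Your route is cleaner conceptually, since it reduces the uniqueness to a single annihilator computation rather than a chain of normalizations; the paper's route has the advantage that it only ever needs the degree~$\pm 1$ version of the kernel computation, which is exactly Lemma~\ref{lem.actzero}.

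However, there is a genuine gap in your justification of $I^{(k)}\cap\sigma^k(\frm)\subseteq\ker\psi_k$ for the \emph{arbitrary} simple $M$. You write ``using $at^k=t^k\sigma^{-k}(a)$ in $R[t,t^{-1};\sigma]$ and $\frm w=0$,'' but $M$ is only a $B$-module, not an $R[t,t^{-1};\sigma]$-module, so $t^k$ does not act on $w$ and the identity cannot be applied to $w$ directly. Nor can you rewrite $at^k$ as an element of $B_k\cdot\frm$ inside $B$: that would require $a\in I^{(k)}\sigma^k(\frm)$, which is in general strictly smaller than $I^{(k)}\cap\sigma^k(\frm)$. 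The conclusion you want is nonetheless true, and you can recover it as follows. Since the interval $[\frm,\sigma^k(\frm)]$ lies in $\Supp_R M$ and contains no interior breaks, one has $\sigma^s(J)\not\subseteq\sigma^k(\frm)$ for $0\le s\le k-1$, hence $I^{(k)}\not\subseteq\sigma^k(\frm)$; pick $a_0\in I^{(k)}\setminus\sigma^k(\frm)$. The map $\psi_k$ is $R$-linear with target a one-dimensional $R/\sigma^k(\frm)$-space, and commutativity of $R$ gives $a_0\psi_k(a)=\psi_k(a_0a)=\psi_k(aa_0)=a\,\psi_k(a_0)$ for all $a\in I^{(k)}$. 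Since $a_0$ is a unit modulo $\sigma^k(\frm)$, this forces $\psi_k(a)=0$ whenever $a\in\sigma^k(\frm)$. (Alternatively, you could iterate Lemma~\ref{lem.actzero} degree by degree, which is essentially what the paper's inductive construction does.) Once this step is repaired, your annihilator argument goes through.

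A minor remark on the support analysis: in the ``$H\subseteq\sigma(\frn')$'' branch you conclude $M_{\frn'}=0$; strictly speaking this requires $M_{\sigma(\frn')}\neq 0$ (so that simplicity gives $M=B\cdot M_{\sigma(\frn')}$), and in that case you obtain a contradiction rather than ``$\frn'$ is the maximum.'' If instead $M_{\sigma(\frn')}=0$ then $\frn'$ is already maximal. Either way the conclusion stands, but it would help to make the case split explicit.
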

\begin{proof}
\eqref{inf1}
If $M\in (B,R)\wmodO$ is a simple weight module, then by Proposition \ref{prop.dim-weight}\eqref{dim2} we have that for each $\frm\in\cO$,  either $M_\frm=0$ or $M_\frm\simeq R/\frm$ as $R$-modules. Let $M_\frm\neq 0$ and $0\neq w\in M_\frm$. Set $v_{\frm}:=w$. Since $\cO$ has no breaks, by Proposition \ref{prop.inj-maps}\eqref{inj1}, $B_1v_{\frm}\neq 0$ and $B_{-1}v_{\frm} \neq 0$. Hence, $B_{\pm 1}\cdot v_{\frm}\simeq  R/\sigma^{\pm 1}(\frm)$ as $R$-modules. Let $j_0\in J$ such that $j_0t\cdot v_{\frm}\neq 0$. By Lemma \ref{lem.actzero}, $j_0\not\in\sigma(\frm)$, so there is a $r_0\in R$ such that $r_0j_0=1+q$, with $q\in\sigma(\frm)$. Set $v_{\sigma(\frm)}:=(r_0j_0)t\cdot v_{\frm}$. Then, for all $j\in J\setminus \sigma(\frm)$ there is $r\in R$ such that $j-rr_0j_0\in\sigma(\frm)$. If follows that
\begin{align*}
jt\cdot v_{\frm}&=(j-rr_0j_0+rr_0j_0)t\cdot v_{\frm}
    = r v_{\sigma(\frm)}\\
    &= (r+rq-rq)v_{\sigma(\frm)} 
    = rr_0j_0v_{\sigma(\frm)}\\
    &= (rr_0j_0-j+j)v_{\sigma(\frm)}
    = jv_{\sigma(\frm)}. 
\end{align*}
Notice that for all $j\in J\cap\sigma(\frm)$ we have $jt\cdot v_{\frm}=0$ by Lemma \ref{lem.actzero}. Analogously, we can define $v_{\sigma\inv(\frm)}$ by looking at the action of $\sigma\inv(h)t\inv$ on $v_{\frm}$ for any $h\in H\setminus\sigma(\frm)$. Proceeding inductively, we obtain that $M\simeq M(\cO)$.
 
\eqref{inf2} 
If $M$ is a simple weight module with $M_{\frm}\neq 0$, it is clear from Lemma \ref{lem.actzero} that the support of $M$ is contained in the interval (according to the order on $\cO$) between two breaks, which contains $\frm$. The fact that $M \simeq M(\cO,\frn)$, where $\frn\in\beta'$ is the minimal element with $\frm\leq \frn$, follows from the same reasoning as in part \eqref{inf1} (there are no further breaks within the interval, so we can define $v_\frm$ and $v_{\sigma(\frm)}$ as desired).
\end{proof}

Indecomposable weight modules for GWAs have been described in \cite{DGO} and for infinite orbits their classification can be easily obtained from the classification of the simples. While the description of simple weight modules for BR algebras is similar to the situation for GWAs, the indecomposables can be much more complicated.

\begin{example}
Let $B$ be the algebra of Example \ref{ex.nongwa}. Recall that $R=\kk[z_1,z_2]$, $\sigma(z_1)=z_1-1$, $\sigma(z_2)=z_2+1$, $H=\sigma(z_1,z_2)$, $J=R$. For this example we also assume that $\operatorname{char}(\kk)=0$. Then $HJ=\sigma(z_1,z_2)=(z_1-1,z_2+1)$ is a maximal ideal. We consider the infinite orbit 
\[ \cO=\ZZ\cdot \frm=\{(z_1-k,z_2+k)\in\Maxspec(R)~|~k\in\ZZ\},\]
which contains the only break $\frm=(z_1,z_2)$. All the other orbits have no breaks, hence the categories of weight modules supported on those other orbits are semisimple with a single simple object up to isomorphism.

(1) Consider $$M=\bigoplus_{k\in \ZZ}(R/(z_1-k,z_2+k))v_k$$ as an $R$-module. We define a $B$-action by
\[t\cdot v_k=v_{k+1}; \qquad 
z_it\inv\cdot v_k=\begin{cases} 
    z_i v_{k-1} & \text{ if }k\neq 1 \\ 
    0 & \text{ if }k=1. 
\end{cases}\]
Then $M$ is indecomposable but not simple as the module spanned by $\{v_k~|~k\geq 1\}$ is a proper submodule. This is very similar to the GWA situation.

(2) Let $\alpha_1,\alpha_2\in \kk$, \[M(\alpha_1,\alpha_2)=\bigoplus_{k\in \ZZ}(R/(z_1-k,z_2+k))v_k\] 
as an $R$-module. We define a $B$-action by
\[ t\cdot v_k=\begin{cases} 
    v_{k+1} & \text{ if }k\neq 0 \\ 
    0 & \text{ if }k=0, 
\end{cases} \qquad 
z_it\inv\cdot v_k=\begin{cases} 
    z_i v_{k-1} & \text{ if }k\neq 1 \\ 
    \alpha_i v_0 & \text{ if }k=1. 
\end{cases}\]
As long as $(\alpha_1,\alpha_2)\neq (0,0)$, $M(\alpha_1,\alpha_2)$ is indecomposable. Since the module spanned by $\{v_k~|~k\leq 0\}$ is a proper submodule, then $M(\alpha_1,\alpha_2)$ is not simple. We also have $M(\alpha_1,\alpha_2)\simeq M(\alpha'_1,\alpha'_2)$ if and only if $(\alpha_1,\alpha_2)=c(\alpha'_1,\alpha'_2)$ for some $c\in \kk$. Thus, there is a $\PP_{\kk}^1$-family of nonisomorphic, nonsimple indecomposable modules that have the same composition series of length two. This is different from the situation for GWAs, where there would be only one isomorphism class of such modules.

(3) Let 
\[ N=\bigoplus_{k\leq 0}\Big(R/(z_1-k,z_2+k)\Big)v_k\oplus\bigoplus_{k\geq 1}\Bigg(\Big(R/(z_1-k,z_2+k)\Big)v_k+\Big(R/(z_1-k,z_2+k)\Big)w_k\Bigg)\] 
as an $R$-module. We define a $B$-action by
\begin{align*}
t\cdot v_k &=
    \begin{cases} 
        v_{k+1} & \text{ if }k\neq 0 \\ 
        0 & \text{ if }k=0, 
    \end{cases} & 
z_it\inv\cdot v_k &= 
    \begin{cases} 
        z_i v_{k-1} & \text{ if }k\neq 1 \\ 
        (i-1) v_0 & \text{ if }k=1, 
    \end{cases} \\
t\cdot w_k &= w_{k+1}\quad k\geq 1, & 
    z_it\inv\cdot w_k &=
        \begin{cases}
            z_i w_{k-1} & \text{ if }k> 1 \\ 
            (2-i) v_0 & \text{ if }k=1. 
        \end{cases}
\end{align*}
Then $N$ is an indecomposable module where the weight spaces are not all one-dimensional, which does not happen for GWAs.
\end{example}

\subsection{Finite orbit case}\label{sec.finorb}

Now we want to examine the situation for finite orbits. The following hypothesis will be assumed throughout the rest of this section.

\begin{hypothesis}\label{hyp.finorb}
We fix $n\in \NN$ and we suppose that $\sigma^n=\id_R$. We further suppose that $\cO = \ZZ \cdot \frm \subset \Maxspec(R)$ is an orbit of size $n$.
\end{hypothesis}

This hypothesis is not necessary to have a finite orbit. If $\frm \in \Maxspec(R)$ satisfies $\sigma^k(\frm) \subseteq \frm$, then $\ZZ \cdot \frm$ will be a finite orbit. Further, even if $\sigma^n = \id_R$, it is possible that some finite orbits have sizes which are proper divisors of $n$.

However, these two simplifying assumptions hold in most relevant examples and restricting our attention to orbits satisfying Hypothesis~\ref{hyp.finorb} allows us to describe the simple weight modules.

Under these hypotheses, we have $\cO=\{\frm,\sigma(\frm),\ldots,\sigma^{n-1}(\frm)\}$. We let $\Lambda_n:=\oplus_{k\in\ZZ}B_{kn}\subset B$ be the $n$-th Veronese subalgebra of $B$. For any $\frm\in\cO$, we have $B_n\frm=\sigma^n(\frm)B_n=\frm B_n$, and $B_{-n}\frm=\sigma^{-n}(\frm)B_{-n}=\frm B_{-n}$, hence $\frm \Lambda_n=\Lambda_n \frm$ is a two sided ideal in $\Lambda_n$.

Notice that $\sigma(I^{(n)})=\sigma(J\sigma(J)\cdots \sigma^{n-1}(J))=\sigma(J)\sigma^2(J)\cdots \sigma^n(J)=\sigma(J)\sigma^2(J)\cdots J=I^{(n)}$, and similarly $\sigma(I^{(-n)})=I^{(-n)}$, hence $\sigma$ restricts to an automorphism $\sigma|_{\Lambda_n}:\Lambda_n\sto \Lambda_n$ where $\sigma(a t^{kn})=\sigma(a)t^{kn}$. The following is then clear.

\begin{lemma}\label{lem.iso-lambda}
For all $\frm\in\cO$ and for all $i\in\ZZ$, there is an isomorphism of algebras $\Lambda_n/(\frm)\simeq \Lambda_n/(\sigma^i(\frm))$ induced by $\sigma^i$.
\end{lemma}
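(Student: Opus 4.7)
The plan is to exploit the fact that $\sigma^i$ is already an algebra automorphism of $\Lambda_n$ and that the two-sided ideal generated by $\frm$ behaves well under it. In other words, this is an instance of the general principle that any algebra automorphism $\phi$ of a ring $A$ induces an isomorphism $A/I \sto A/\phi(I)$ for any two-sided ideal $I$.

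First I would verify that the formula $\sigma|_{\Lambda_n}(at^{kn}) = \sigma(a)t^{kn}$ really defines an algebra automorphism of $\Lambda_n$. The text has already observed that $\sigma(I^{(\pm n)}) = I^{(\pm n)}$, so the map is well-defined as a graded $\kk$-linear map. To check multiplicativity, one computes
\[
\sigma(at^{kn})\sigma(bt^{\ell n}) = \sigma(a)\sigma^{kn}(\sigma(b))t^{(k+\ell)n} = \sigma(a)\sigma(b)t^{(k+\ell)n},
\]
using Hypothesis~\ref{hyp.finorb} ($\sigma^n = \id_R$) at the middle step, and similarly $\sigma(at^{kn}\cdot bt^{\ell n}) = \sigma(a\sigma^{kn}(b))t^{(k+\ell)n} = \sigma(a)\sigma(b)t^{(k+\ell)n}$. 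So $\sigma|_{\Lambda_n}$ is an algebra homomorphism; since $\sigma$ is invertible on $R$ and on each $I^{(kn)}$, so is $\sigma|_{\Lambda_n}$. Hence $\sigma^i|_{\Lambda_n}$ is an algebra automorphism of $\Lambda_n$ for every $i \in \ZZ$.

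Next I would verify that $\sigma^i$ sends the two-sided ideal $(\frm) = \frm\Lambda_n = \Lambda_n \frm$ onto the two-sided ideal $(\sigma^i(\frm))$. Because $\sigma^i$ is an automorphism of $\Lambda_n$ and $\sigma^i(\frm) \subseteq \Lambda_n$ is the image of $\frm \subseteq R = B_0 \subseteq \Lambda_n$, we have
\[
\sigma^i(\frm\Lambda_n) = \sigma^i(\frm)\sigma^i(\Lambda_n) = \sigma^i(\frm)\Lambda_n = (\sigma^i(\frm)),
\]
and analogously on the other side. Composing $\sigma^i\colon \Lambda_n \sto \Lambda_n$ with the quotient map $\Lambda_n \sto \Lambda_n/(\sigma^i(\frm))$ yields a surjective algebra homomorphism whose kernel is $(\sigma^i)^{-1}((\sigma^i(\frm))) = (\frm)$. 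The induced map $\Lambda_n/(\frm) \sto \Lambda_n/(\sigma^i(\frm))$ is therefore an algebra isomorphism, and it is visibly ``induced by $\sigma^i$'' in the sense of the statement.

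There is no real obstacle here: the only thing that could go wrong in a more general setting would be the well-definedness of $\sigma$ on $\Lambda_n$, and that was arranged by the identity $\sigma(I^{(\pm n)}) = I^{(\pm n)}$ recorded immediately before the statement of the lemma together with $\sigma^n = \id_R$ from Hypothesis~\ref{hyp.finorb}. Once that is in place, the rest is the standard transport of ideals by an automorphism.
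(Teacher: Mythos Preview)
Your argument is correct and is exactly the standard ``transport of ideals by an automorphism'' that the paper has in mind; the paper simply declares the lemma clear without writing out any details. Your only extra work is spelling out the multiplicativity check for $\sigma|_{\Lambda_n}$ and the equality $\sigma^i(\frm\Lambda_n)=\sigma^i(\frm)\Lambda_n$, both of which are fine.
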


\begin{notation}
Let $N$ be a $\Lambda_n/(\frm)$-module, then for all $i\in\ZZ$, we define the $\Lambda_n/(\sigma^i(\frm))$-module ${}^{\sigma^i} N$. As a $\kk$-vector space, ${}^{\sigma^i} N = N$. For $v \in N$, we use the notation ${}^{\sigma^i} v$ for the corresponding element in ${}^{\sigma^i} N$. The action is defined by
\[ \lambda\cdot {}^{\sigma^i}w:= \sigma^{-i}(\lambda) {}^{\sigma^i}w \qquad \text{ for all } {}^{\sigma^i}w\in {}^{\sigma^i}N \text{ and all }\lambda\in\Lambda_n/(\sigma^i(\frm)).\]
Similarly, if $\theta: N \sto N$ is an $R/\frm$-linear map, then we use the notation ${}^{\sigma^i} \theta$ for the induced $R/\sigma^i(\frm)$-linear map ${}^{\sigma^i}N \sto {}^{\sigma^i}N$.
\end{notation}

The next set of results give a sense of the structure of weight modules in the case in which $\cO=\ZZ\cdot\frm$ has no breaks. That is, $HJ\not\subset\frm$ for all $\frm\in\cO$.

\begin{lemma}\label{lem.simpleLam-mod}
Suppose that there are no breaks in $\cO$ and that $M=\bigoplus_{\frm\in\cO}M_\frm$ is a simple weight $B$-module. Then for each $\frm \in \cO$, $M_\frm$ is a simple $\Lambda_n/(\frm)$-module. Further, for all $\frm \in \cO$ and all $i\in \ZZ$, $M_{\sigma^{i}(\frm)}\simeq {}^{\sigma^i} M_\frm$.
\end{lemma}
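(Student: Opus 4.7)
The plan is to first observe that each weight space $M_\frm$ is naturally a module over $\Lambda_n/(\frm)$ (because $B_{kn} M_\frm \subseteq M_{\sigma^{kn}(\frm)} = M_\frm$ by Hypothesis~\ref{hyp.finorb}, and $\frm\Lambda_n = \Lambda_n \frm$ is a two-sided ideal), then derive simplicity of $M_\frm$ directly from simplicity of $M$ as a $B$-module, and finally produce the twisted isomorphism $M_{\sigma^i(\frm)} \simeq {}^{\sigma^i}M_\frm$ via left multiplication by a carefully chosen element of $B_i$.

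For the simplicity claim, let $0 \neq N \subseteq M_\frm$ be a $\Lambda_n$-submodule. Then $BN$ is a nonzero graded $B$-submodule of $M$, so by simplicity $BN = M$. Since the orbit has size exactly $n$, the indices $k\in\ZZ$ with $\sigma^k(\frm) = \frm$ are precisely the multiples of $n$, and each $B_k$ sends $M_\frm$ into $M_{\sigma^k(\frm)}$. Therefore
\[
M_\frm = (BN)_\frm = \sum_{\ell \in \ZZ} B_{\ell n} N = \Lambda_n N \subseteq N,
\]
forcing $N = M_\frm$. The no-breaks hypothesis, together with Proposition~\ref{prop.inj-maps}\eqref{inj2}, guarantees that all $M_{\sigma^i(\frm)}$ share the same finite $R/\sigma^i(\frm)$-dimension, so either all are zero (excluded by $M \neq 0$) or all are nonzero; in the latter case the above argument applies uniformly to every $\frm \in \cO$.

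For the isomorphism, fix $i > 0$ and, for each $0 \le k < i$, choose $j_k \in J\setminus \sigma^{k+1}(\frm)$. Such $j_k$ exists because absence of breaks in $\cO$ means $\sigma^{k+1}(\frm) \notin \css(B)$, so $J \not\subseteq \sigma^{k+1}(\frm)$. By Proposition~\ref{prop.inj-maps}\eqref{inj1}, left multiplication by $j_k t$ gives an injective $R$-linear map $M_{\sigma^k(\frm)} \to M_{\sigma^{k+1}(\frm)}$ between finite-dimensional spaces of equal dimension, hence a bijection. The composite collapses, via the skew-Laurent commutation, to left multiplication by the single element
\[
b = j_{i-1}\,\sigma(j_{i-2})\cdots \sigma^{i-1}(j_0)\, t^i \in B_i,
\]
yielding a $\kk$-linear isomorphism $\psi\colon M_\frm \to M_{\sigma^i(\frm)}$. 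The case $i < 0$ is analogous, using products of elements of the form $\sigma\inv(h)t\inv$ with $h \in H\setminus \sigma^{k}(\frm)$ at each stage.

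It remains to check that $\psi$ intertwines the $\Lambda_n$-actions as required, i.e.\ that $\psi(\sigma^{-i}(\lambda) w) = \lambda\, \psi(w)$. Writing $\lambda = c\,t^{\ell n} \in \Lambda_n$ and $b = at^i$, a direct computation in $R[t,t\inv;\sigma]$ using $\sigma^n = \id_R$ and commutativity of $R$ gives
\[
\lambda b = c\,\sigma^{\ell n}(a)\,t^{\ell n + i} = ca\,t^{i+\ell n} = a\,\sigma^{i}(\sigma^{-i}(c))\,t^{i+\ell n} = b\,\sigma^{-i}(\lambda),
\]
which is exactly the desired intertwining relation after applying both sides to $w \in M_\frm$. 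I expect this commutation identity (together with bookkeeping for $i<0$) to be the main technical obstacle; the conceptual content is captured by the two preceding ingredients, graded simplicity of $M$ and the injectivity/dimension conclusions of Proposition~\ref{prop.inj-maps}.
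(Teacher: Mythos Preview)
Your proof is correct and follows essentially the same approach as the paper's: both derive simplicity of $M_\frm$ from $(BN)_\frm = \Lambda_n N$ and both establish the twisted isomorphism via left multiplication by a degree-$i$ element, checking the intertwining relation $\lambda b = b\,\sigma^{-i}(\lambda)$ using $\sigma^n=\id_R$. The only cosmetic difference is that the paper treats the case $i=1$ (a single $jt$) and then appeals to induction, whereas you build the degree-$i$ element directly as a product; your explicit remark that Proposition~\ref{prop.inj-maps}\eqref{inj2} forces all weight spaces to be nonzero is a welcome addition that the paper leaves implicit.
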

\begin{proof}
Let $\frm\in\cO$. If $b\in B$, then $b\cdot M_\frm\subset M_\frm$ if and only if $b\in\Lambda_n$. Also, $\frm\cdot M_\frm=0$, so $M_\frm$ is indeed a $\Lambda_n/(\frm)$-module. Now suppose that $0\subsetneq N\subsetneq M_\frm$ is a nontrivial $\Lambda_n/(\frm)$-submodule, then $B \cdot N$ is a $B$-submodule of $M$ and $(B\cdot N)_\frm=N$. Hence, $B\cdot N$ is a proper submodule, which contradicts the simplicity of $M$. Finally, since there are no breaks in the orbit, then Proposition \ref{prop.inj-maps} \eqref{inj1} implies that $jt:M_\frm\sto M_{\sigma(\frm)}$ is a bijection for any $j\in J\setminus\sigma(\frm)$. Pick any such $j$, let $\lambda t^{kn}\in B_{kn}$ for some $k\in\ZZ$, and let $w\in M_\frm$. Then
\[\lambda t^{kn}(jt\cdot w)=\lambda \sigma^{kn}(j)t^{kn+1}\cdot w=\lambda j t^{1+kn}\cdot w = jt(\sigma^{-1}(\lambda)t^{kn}\cdot w) \]
which shows exactly that $M_{\sigma(\frm)}\simeq {}^\sigma M_\frm$ and, by induction, that $M_{\sigma^{i}(\frm)}\simeq {}^{\sigma^i} M_\frm$.
\end{proof}

\begin{lemma}\label{lem.Lambdathetamod}
Suppose that there are no breaks in $\cO$, let $\frm \in \cO$, and let $N$ be a $\Lambda_n/(\frm)$-module. Then there exists an invertible $R/\frm$-linear transformation $\theta \in \Aut_{R/\frm}(N)$ such that, as elements of $\End_{R/\frm}(N)$, we have
\begin{equation}\label{eq.thetaaction} 
a t^n := 
\begin{cases}
a\theta  & \text{ if }a\in I^{(n)}\setminus\frm \\ 
0 & \text{ if }a\in I^{(n)}\cap\frm
\end{cases}
\qquad 
\text{and}
\qquad
b t^{-n} :=
\begin{cases}
b\theta \inv & \text{ if }b\in I^{(-n)}\setminus\frm \\ 
0 & \text{ if }b\in I^{(-n)}\cap\frm
\end{cases}
\end{equation}
for all $a \in I^{(n)}$ and $b \in I^{(-n)}$.

Conversely, if $N$ is a $R/\frm$-vector space and $\theta \in \Aut_{R/\frm}(N)$ is an invertible linear transformation, then $N$ becomes a $\Lambda_n/(\frm)$-module by defining the action of the generators as in \eqref{eq.thetaaction}. Finally, $N$ is a simple $\Lambda_n/(\frm)$-module if and only if $N$ has no nontrivial $\theta $-invariant subspaces.
\end{lemma}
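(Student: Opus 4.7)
The strategy is to exploit the no-breaks hypothesis to find an element of $I^{(n)}t^n$ that acts invertibly on $N$, and then define $\theta$ as a suitable scalar normalization of that operator.

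First I would establish $I^{(\pm n)}\not\subset\frm$. Since no breaks in $\cO$ means $\sigma(\frm')\notin\css(B)$ for every $\frm'\in\cO$ and $\cO$ is $\sigma$-stable, we have $H,J\not\subset\frm'$ for every $\frm'\in\cO$. For each $i$, $\sigma^i(J)\not\subset\frm$, so picking $j_i\in\sigma^i(J)\setminus\frm$ and using primality of $\frm$ produces $j_0 j_1\cdots j_{n-1}\in I^{(n)}\setminus\frm$; analogously $I^{(-n)}\not\subset\frm$. Now for the forward direction, fix $a_0\in I^{(n)}\setminus\frm$ and set $\theta:=\overline{a_0}^{\,-1}(a_0 t^n)\in\End_{R/\frm}(N)$. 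The key identity is $a_0\cdot(at^n)=a\cdot(a_0 t^n)$ in $\Lambda_n$, valid for every $a\in I^{(n)}$ because $R$ is commutative and $\sigma^n=\id_R$ (both sides equal $(aa_0)t^n$). Evaluating on $v\in N$ and dividing by the unit $\overline{a_0}\in R/\frm$ yields $(at^n)v=\overline{a}\,\theta(v)$ for $a\notin\frm$. The same identity with $a\in\frm\cap I^{(n)}$ gives $\overline{a_0}(at^n)v=\overline{a\,a_0}\,\theta(v)=0$, hence $(at^n)v=0$. In particular this shows $\theta$ is independent of the choice of $a_0$. Choose $b_0\in I^{(-n)}\setminus\frm$, set $\theta':=\overline{b_0}^{\,-1}(b_0 t^{-n})$, and run the analogous argument to obtain $(bt^{-n})v=\overline{b}\,\theta'(v)$. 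Finally, computing $(a_0 t^n)(b_0 t^{-n})=a_0 b_0$ and $(b_0 t^{-n})(a_0 t^n)=a_0 b_0$ in $\Lambda_n$ and evaluating on $v$ in two ways yields $\theta\theta'=\theta'\theta=\id$, so $\theta\in\Aut_{R/\frm}(N)$ with $\theta^{-1}=\theta'$.

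For the converse, given $\theta\in\Aut_{R/\frm}(N)$, I would use the fact that $\sigma^n=\id_R$ implies $I^{(kn)}=(I^{(n)})^k$ and $I^{(-kn)}=(I^{(-n)})^k$ for $k\geq 1$, so $B_{kn}=(B_n)^k$ and $B_{-kn}=(B_{-n})^k$. Thus $\Lambda_n$ is generated as an $R$-algebra by $I^{(n)}t^n$ and $I^{(-n)}t^{-n}$, with relations inherited from the skew Laurent ring $R[t,t^{-1};\sigma]$. Declaring the action as in \eqref{eq.thetaaction}, each of these relations ($R$ acts centrally on $I^{(\pm n)}t^{\pm n}$ because $\sigma^n=\id_R$; $(at^n)(bt^{-n})=ab$ and $(bt^{-n})(at^n)=ab$; and their higher-degree analogs obtained by concatenation) translates directly to an identity in $\End_{R/\frm}(N)$ that follows from $R/\frm$-linearity of $\theta$ together with $\theta\theta^{-1}=\id$. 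The resulting $\Lambda_n$-action factors through $\Lambda_n/(\frm)$ because elements of $\frm\subset R$ act as zero. For the simplicity statement, a subspace $W\subset N$ is a $\Lambda_n/(\frm)$-submodule if and only if it is an $R/\frm$-subspace stable under both $\theta$ and $\theta^{-1}$, equivalently $\theta$-invariant; thus $N$ is simple precisely when no proper nonzero subspace is $\theta$-invariant.

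The main obstacle I anticipate is the bookkeeping in the converse direction: one must confirm that the presentation of $\Lambda_n$ in terms of the three pieces $R$, $I^{(n)}t^n$, $I^{(-n)}t^{-n}$ captures every relation inherited from $R[t,t^{-1};\sigma]$, and verify each on the proposed action. Everything ultimately reduces to commutativity of $R$, the identity $\sigma^n=\id_R$, and the fact that $\theta$ is an invertible $R/\frm$-linear map.
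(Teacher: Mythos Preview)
Your proposal is correct and follows essentially the same approach as the paper's proof: both use the no-breaks hypothesis to get $I^{(\pm n)}\not\subset\frm$, define $\theta$ as a scalar normalization of $a_0t^n$ for some $a_0\in I^{(n)}\setminus\frm$, and verify invertibility via the relation $(a_0t^n)(b_0t^{-n})=a_0b_0$. Your version is in fact more explicit in several places (the identity $a_0(at^n)=a(a_0t^n)$ to handle arbitrary $a$, and the outline of the converse via generators and relations), where the paper simply says the verification is ``immediate'' or refers to a similar argument elsewhere.
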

\begin{proof}
Since $HJ\not\subset \sigma^i(\frm)$ for all $i$, we have that $\sigma^i(H),\sigma^i(J)\not\subset\frm$, for all $i$, which then implies that $I^{(n)}, I^{(-n)}\not\subset\frm$. Let $N$ be a $\Lambda_n/(\frm)$-module and let $a\in I^{(n)}\setminus \frm$, $b\in I^{(-n)}\setminus\frm$. Then $(at^n)(bt^{-n})=ab\in (R/\frm) \id_N$ and since $ab \not \in \frm$, this map is invertible. Define $\theta :=\check{a}(at^n)$, where $\check{a}a\equiv 1\pmod \frm$. Then $\theta $ is an $R/\frm$-invertible linear transformation that satisfies all the required conditions. Note also that a proof similar to the proof of Theorem~\ref{thm.inf-orb-class}  shows that $\theta $ is independent of the choice of $a$. If $a\in I^{(n)}\cap \frm$ and $b\in I^{(-n)}\setminus\frm$, then $(at^n)(bt^{-n})=ab=0\in (R/\frm) \id_N$. Hence $at^n=0$ since $bt^{-n}$ is invertible. The same reasoning applies if $a\in I^{(n)}\setminus \frm$ and $b\in I^{(-n)}\cap\frm$. 

Given a $R/\frm$-module $N$, it is immediate to verify that \eqref{eq.thetaaction} gives a well-defined $\Lambda_n/(\frm)$-action. Since $B_{\pm n} N=(R/\frm) \theta ^{\pm 1}N$ by the above, it is clear that $N$ is a simple $\Lambda_n/(\frm)$-module if and only if there are no nontrivial $\theta $-invariant subspaces.
\end{proof}

\begin{lemma}\label{lem.simples-nobr}
Suppose that $\cO$ does not contain any breaks. Let $\frm\in\cO$, let $N$ be a $R/\frm$-vector space, and let $\theta \in \Aut_{R/\frm}(N)$ be an invertible linear transformation. We can consider $N$ as a $\Lambda_n/(\frm)$-module by Lemma  \ref{lem.Lambdathetamod}. We define 
\[M(\frm,N,\theta )=\bigoplus_{i=0}^{n-1} {}^{\sigma^i}N.\]
Then $M(\frm,N,\theta )$ is a weight module for $B$ with $B$-action defined as follows. For all $j\in J$, $h\in H$, and $v \in N$, we define
\begin{align}\label{eq.jact}
jt({}^{\sigma^i}v) &=
\begin{cases}
j({}^{\sigma^{i+1}}v) & \text{ if }0\leq i\leq n-2, \\
j \theta v & \text{ if }i=n-1,
\end{cases} \\\
\label{eq.hact}
\sigma^{-1}(h)t^{-1}({}^{\sigma^i}v) &=
\begin{cases} \sigma^{-1}(h)({}^{\sigma^{i-1}}v) & \text{ if }1\leq i\leq n-1, \\
\sigma^{-1}(h) \theta ^{-1}v & \text{ if }i=0.
\end{cases} 
\end{align}

If $N$ is simple as a $\Lambda_n/(\frm)$-module, then $M(\frm,N,\theta )$ is a simple weight module for $B$. Further,  for all $k\in \ZZ$ we have $M(\sigma^k(\frm),{}^{\sigma^k}N,{}^{\sigma^k}\theta )\simeq M(\frm,N,\theta ) $ as $B$-modules, where ${}^{\sigma^k}\theta $ is the $R/\sigma^k(\frm)$-linear transformation of ${}^{\sigma^k}N$ induced by $\theta $.
\end{lemma}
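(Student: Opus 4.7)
I would prove the lemma in three parts, corresponding to its three assertions: well-definedness of the $B$-action, simplicity when $N$ is $\Lambda_n/(\frm)$-simple, and the shift isomorphism.

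For well-definedness, since $B$ is generated over $\kk$ by $R$, $Jt$, and $\sigma\inv(H)t\inv$ (Lemma \ref{lem.props}\eqref{gprops2}), it suffices to verify the skew-Laurent relations $jt \cdot r = \sigma(r) \cdot jt$ and $\sigma\inv(h)t\inv \cdot r = \sigma\inv(r) \cdot \sigma\inv(h)t\inv$, together with the two round-trip identities $(jt)(\sigma\inv(h)t\inv) = hj$ and $(\sigma\inv(h)t\inv)(jt) = \sigma\inv(hj)$, viewed as $R$-actions on $M(\frm,N,\theta)$. The interior cases $0 < i < n-1$ follow immediately from the twisted $R$-action on ${}^{\sigma^i}N$. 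The only delicate checks are at the boundary indices $i = 0$ and $i = n-1$, where $\theta$ intervenes; here I would invoke Lemma \ref{lem.Lambdathetamod}, which says precisely that for $a \in I^{(n)}$ the action of $at^n$ on $N$ is $a\theta$ (or $0$ if $a \in \frm$), so round trips of the form $(\sigma\inv(h)t\inv)(jt)$ that cross the boundary produce the correct scalar $\sigma\inv(hj) \in R$ via $\theta^{-1}\theta = \id_N$.

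For simplicity, let $0 \neq W \subseteq M(\frm,N,\theta)$ be a $B$-submodule. Since $R$ acts on the summand ${}^{\sigma^i}N$ through the distinct maximal ideal $\sigma^i(\frm)$, the submodule $W$ automatically decomposes into weight spaces $W = \bigoplus_{i=0}^{n-1} W_{\sigma^i(\frm)}$ with $W_{\sigma^i(\frm)} \subseteq {}^{\sigma^i}N$. Because $\cO$ has no breaks, Proposition \ref{prop.inj-maps}\eqref{inj1} tells us that for every $i$ there exist $j \in J$ and $h \in H$ giving injective maps $jt$ and $\sigma\inv(h)t\inv$ between consecutive weight spaces. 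Starting from a nonzero $W_{\sigma^{i_0}(\frm)}$ and repeatedly applying $\sigma\inv(h)t\inv$-type operators, we reach a nonzero element of $W_\frm$. Now $W_\frm$ is a $\Lambda_n/(\frm)$-submodule of $N$ (since $\Lambda_n$ preserves the $\frm$-weight space), so the hypothesis that $N$ is $\Lambda_n/(\frm)$-simple forces $W_\frm = N$. Applying $jt$-type operators then yields $W_{\sigma^i(\frm)} = {}^{\sigma^i}N$ for all $i$, whence $W = M(\frm,N,\theta)$.

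For the shift isomorphism, I would define $\Phi: M(\sigma^k(\frm),{}^{\sigma^k}N,{}^{\sigma^k}\theta) \to M(\frm,N,\theta)$ by a cyclic rotation that inserts one $\theta$-twist each time a summand wraps around. Explicitly, for $v \in N$ and $0 \leq i \leq n-1$, set $\Phi({}^{\sigma^i}({}^{\sigma^k}v)) = {}^{\sigma^{i+k}}v$ when $i+k < n$ and $\Phi({}^{\sigma^i}({}^{\sigma^k}v)) = {}^{\sigma^{i+k-n}}(\theta v)$ when $i+k \geq n$. This is visibly a $\kk$-linear bijection pairing the weight space at $\sigma^{i+k}(\frm)$ on both sides. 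Compatibility with the generators $jt$ and $\sigma\inv(h)t\inv$ reduces to a case analysis according to whether the source index crosses its boundary at $i = n-1$ or the target index crosses its boundary at $i+k = n-1$; in each case exactly one side acquires a $\theta$, and they match via the tautological identification ${}^{\sigma^k}\theta = \theta$ on underlying sets. The main obstacle I expect is the well-definedness step, specifically carrying the $\theta$-bookkeeping through the mixed boundary cases; once those are verified, the simplicity and isomorphism claims follow by the arguments just sketched.
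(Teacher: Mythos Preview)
Your proposal is correct and follows essentially the same approach as the paper. For well-definedness the paper verifies consistency by computing the composite $(j_1t)(j_2t)\cdots(j_nt)$ acting on ${}^{\sigma^i}v$ and checking it agrees with the $\Lambda_n$-action $a\,{}^{\sigma^i}\theta$, whereas you instead check the skew-Laurent relations on generators; both isolate the boundary $\theta$-bookkeeping as the only nontrivial step, and your simplicity and shift-isomorphism arguments match the paper's (your $\Phi$ is the inverse of the paper's explicit map $\alpha$, up to a harmless global $\theta$).
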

\begin{proof}
First we need to show that the action is well defined. Let $a\in I^{(n)}$. Then $a=j_1\sigma(j_2)\cdots \sigma^{n-1}(j_n)$, with $j_i\in J$ and for any ${}^{\sigma^i}v\in {}^{\sigma^i}(N)$,
\begin{align*}
a({}^{\sigma^i}\theta  ({}^{\sigma^i}v))
    &=(at^n)\cdot ({}^{\sigma^i}v) \\
    &=(j_1t)(j_2t)\cdots (j_nt)\cdot ({}^{\sigma^i}v) \\
    &=(j_1t)\cdots (j_{i}t)(j_{i+1}t)\cdot j_{i+2}\sigma(j_{i+3})\ldots \sigma^{n-2-i}(j_n)({}^{\sigma^{n-1}}v) \\
    &=(j_1t)\cdots (j_{i}t)\cdot j_{i+1}\sigma(j_{i+2})\sigma^2(j_{i+3})\ldots \sigma^{n-1-i}(j_n)\theta v \\
    &= j_1\sigma(j_2)\cdots \sigma^{n-1}(j_n)^{\sigma^i}\theta  ({}^{\sigma^i}v) \\
    &= a {}^{\sigma^i}\theta  ({}^{\sigma^i}v).
\end{align*}
A similar computation shows that the action of $I^{(-n)}t^{-n}$ is also well defined.

Suppose $0\neq w\in M(\frm,N,\theta )$ is a weight vector, then $w\in {}^{\sigma^i}N$ for some $i=0,\ldots, n-1$ since those are the weight spaces. If $N$ is a simple $\Lambda_n/(\frm)$-module, then ${}^{\sigma^i}N$ is a simple $\Lambda_n/\sigma^i(\frm)$-module, hence $\Lambda_n\cdot w={}^{\sigma^i}N$. Since there are no breaks in the orbit, there exists $j\in J\setminus\sigma^i(\frm)$ and by Proposition \ref{prop.inj-maps} \eqref{inj1}, $jt({}^{\sigma^i}N)={}^{\sigma^{i+1}}N$ for all $i$. It follows that $Bw=M$ for any nonzero weight vector, hence $M$ is a simple weight module.

Now, it is clear that for any $k\in \ZZ$, $M(\sigma^k(\frm),{}^{\sigma^k}N,{}^{\sigma^k}\theta )\simeq M(\frm,N,\theta )$ as $R$-modules, since for each of them, the weight space with weight $\sigma^i(\frm)$ is isomorphic to ${}^{\sigma^i}N$ for all $k$. More precisely, we define an isomorphism $\alpha:M(\frm,N,\theta )\to M(\sigma^k(\frm),{}^{\sigma^k}N,{}^{\sigma^k}\theta )$ as follows:
\[\alpha(w)=\begin{cases}
w & \text{ if }w\in {}^{\sigma^i}N,~0\leq i\leq k-1  \\ 
{}^{\sigma^i}\theta w & \text{ if }w\in {}^{\sigma^i}N,~k\leq i\leq n-1 .\end{cases}\]
Then it is clear from the definition that
\[\alpha(jt\cdot w)=jt\cdot\alpha(w),\qquad \alpha(\sigma\inv(h)t\inv\cdot w)=\sigma\inv(h)t\inv\cdot\alpha(w)\]
for all $j\in J$, $h\in H$, and $w\in M(\frm, N,\theta )$.
\end{proof}

\begin{definition}\label{def.HJbreaks}
Suppose that $\frm\in\cO$ is a break and let $M\in (B,R)\wmodO$ be a weight module. We say that $\frm$ is a \emph{$J$-break} for $M$ if there exists $j\in J$, $v\in M_\frm$ such that $jt\cdot v\neq0$. Similarly, we say that $\frm$ is an \emph{$H$-break} for $M$ if there exists $h\in H$ and $w\in M_{\sigma(\frm)}$ with $\sigma^{-1}(h)t^{-1}\cdot w\neq 0$.
\end{definition}

\begin{lemma}\label{lem.HJbreaks}
If $\cO$ is a finite orbit with breaks, $M\in (B,R)\wmodO$ is a simple weight module, and $\frm$ is a break, then $\frm$ cannot be both a $J$-break and an $H$-break for $M$. Furthermore, if $\frm$ is a $J$-break (resp. $H$-break) for $M$, then all other breaks in the orbit $\cO$ are also $J$-breaks (resp. $H$-breaks) for $M$. In this case, $\Supp(M)=\cO$.
\end{lemma}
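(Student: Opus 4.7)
The plan is to reduce each assertion to the simplicity of individual weight spaces as $\Lambda_n$-modules. As a preliminary, I would note that the proof of the first sentence of Lemma~\ref{lem.simpleLam-mod}---each nonzero weight space $M_{\frm'}$ is a simple $\Lambda_n/(\frm')$-module---only uses the inclusion $B_{kn} \subseteq \Lambda_n$ and does not require the absence of breaks, so it applies verbatim here. With this in hand, for a break $\frm$ I define $A_\frm := B_1 M_\frm \subseteq M_{\sigma(\frm)}$ and $D_\frm := B_{-1} M_{\sigma(\frm)} \subseteq M_\frm$, so that $\frm$ is a $J$-break (respectively $H$-break) precisely when $A_\frm \neq 0$ (respectively $D_\frm \neq 0$).

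The central computational step is to verify that $A_\frm$ and $D_\frm$ are $\Lambda_n$-submodules of their respective weight spaces; simplicity then forces each to be either zero or the entire weight space. For $k \geq 0$, the identity $B_{kn+1} = B_1 B_{kn} = B_{kn} B_1$, which is immediate from $\sigma^n = \id$ applied to $I^{(kn+1)} = J\,\sigma(I^{(kn)}) = I^{(kn)} J$, yields $B_{kn} A_\frm \subseteq A_\frm$. For $k < 0$, writing $I^{(-|k|n)} = I^{(1-|k|n)} H$ (via $\sigma^{-|k|n} = \id$) together with the break condition gives $I^{(-|k|n)} J \subseteq I^{(1-|k|n)} \sigma(\frm)$; commuting the $\sigma(\frm)$-factor past $t^{1-|k|n}$ using $\sigma^{|k|n} = \id$ produces a $\frm$-factor that annihilates $M_\frm$, whence $B_{-|k|n} A_\frm = 0$. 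The analysis of $D_\frm$ is entirely symmetric.

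For the first assertion of the lemma, the break condition gives $B_1 B_{-1} \cdot M_{\sigma(\frm)} = HJ \cdot M_{\sigma(\frm)} = 0$, so $D_\frm \subseteq \ker(B_1|_{M_\frm})$; hence $D_\frm = M_\frm$ forces $A_\frm = 0$, and $\frm$ cannot be both a $J$-break and an $H$-break. For the second, suppose $\frm$ is a $J$-break while some $\frm^* \in \cO$ is an $H$-break. Then $D_\frm = 0$, and since $B_{-k} = B_{-1}^k$ we get $B_k M_{\sigma(\frm)} = 0$ for all $k \leq -1$. Moving forward from $\sigma(\frm)$, the map $B_1$ is surjective onto the next weight space at every non-break (by Proposition~\ref{prop.inj-maps}) and at every $J$-break along the way (since there $A_{\frm^*} = M_{\sigma(\frm^*)}$), but vanishes at the first $H$-break encountered. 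Therefore $N := B \cdot M_{\sigma(\frm)}$ is supported on a proper arc of $\cO$; by simplicity $N = M$, forcing $M_{\sigma(\frm^*)} = 0$ and hence $D_{\frm^*} = 0$, contradicting the $H$-break hypothesis. The argument starting from an $H$-break is symmetric. The third assertion follows from the same propagation: once every break in $\cO$ is known to be of the same type, starting at any nonzero weight space and iterating $B_1$ (or $B_{-1}$) propagates nonzeroness around the entire cycle, giving $\Supp(M) = \cO$.

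The main obstacle is the $\Lambda_n$-stability of $A_\frm$ and $D_\frm$, which rests on carefully tracking how the $\sigma(\frm)$-factor produced by the break condition commutes past powers of $t$ via $\sigma^n = \id$; once this is established, all three conclusions follow from the propagation argument together with simplicity of the individual weight spaces.
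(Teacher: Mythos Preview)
Your approach differs from the paper's: where the paper fixes a single weight vector $w = jt \cdot v$, uses $M = B_{\geq 0} w$, and traces nonzero products $(j'_1 t) \cdots (j'_k t) w$ through every weight space of the orbit, you instead establish the structural fact that $A_\frm = B_1 M_\frm$ and $D_\frm = B_{-1} M_{\sigma(\frm)}$ are $\Lambda_n$-submodules and then invoke simplicity of each weight space to obtain the dichotomy $A_{\frm'} \in \{0, M_{\sigma(\frm')}\}$. The $\Lambda_n$-stability computation is correct, and your treatment of the first assertion is clean.

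There is, however, a genuine gap in the second assertion. You assume for contradiction that some $\frm^*$ is an $H$-break and derive $M_{\sigma(\frm^*)} = 0$, hence $D_{\frm^*} = 0$. This only shows that no other break is an $H$-break; it does \emph{not} show that every other break is a $J$-break, which is what the lemma asserts. A break $\frm'$ could in principle be neither---for instance if $M_{\frm'} = 0$, or more generally if $A_{\frm'} = 0$ and $D_{\frm'} = 0$---and your phrasing ``at every $J$-break along the way \ldots\ but vanishes at the first $H$-break encountered'' tacitly excludes this possibility. Your propagation argument for the third assertion then also fails, since $B_1$ vanishes at such a neutral break as well. The repair is minor and fits your framework: rather than supposing $\frm^*$ is an $H$-break, suppose only that some break $\frm^* = \sigma^r(\frm)$ with $1 \leq r \leq n-1$ is \emph{not} a $J$-break, so $A_{\frm^*} = 0$. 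Then the forward propagation from $M_{\sigma(\frm)}$ terminates at or before $\frm^*$, and $N = B \cdot M_{\sigma(\frm)}$ is supported on an arc $\{\sigma(\frm), \ldots, \sigma^{r_0}(\frm)\}$ with $r_0 \leq n-1$, which does not contain $\frm$. Since $N = M$ by simplicity, this forces $M_\frm = 0$, contradicting that $\frm$ is a $J$-break. Deriving the contradiction at $\frm$ rather than at $\sigma(\frm^*)$ handles $H$-breaks and neutral breaks uniformly and yields the second and third assertions at once.
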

\begin{proof}
Suppose that $\frm$ is a $J$-break. The case in which $\frm$ is an $H$-break is completely analogous. Then there exists $j\in J$ and $v\in M_\frm$ such that $jt\cdot v\neq0$. 
Let $0\neq w=jtv\in M_{\sigma(\frm)}$, then for any $h\in H$ we have that
\[\sigma^{-1}(h)t^{-1}\cdot w=\sigma^{-1}(h)t^{-1}(jtv)=\sigma^{-1}(hj)v=0\]
because $\sigma^{-1}(hj)\in\frm$. This implies that $B_{-k}w=0$ for all $k>0$, and that, by the simplicity of $M$, $M=Bw=B_{\geq 0}w$. It follows that for all $u\in M_{\sigma(\frm)}$ there exist $j_1,\ldots,j_{rn}\in J$ such that $u=(j_1t)\cdots (j_{rn}t)w$. So, for all $h\in H$, 
\begin{align*} 
\sigma^{-1}(h)t^{-1}u&=\sigma^{-1}(h)t^{-1}(j_1t)\cdots (j_{rn}t)w \\
    &= \sigma^{-1}(hj_1)(j_2t)\cdots (j_{rn}t)w \\
    &=(j_2t)\cdots (j_{rn}t)\sigma^{-(rn-1)-1}(hj_1)w \\
    &=(j_2t)\cdots (j_{rn}t)(hj_1)w
    =0, 
\end{align*}
which proves that $\frm$ is not an $H$-break. The same reasoning proves that all the other breaks cannot be $H$-breaks. 

We now show that the other breaks must be $J$-breaks. Since $M=B_{\geq 0}w$, let $0\neq v\in M_\frm$ as above. Then there exist $j'_1,\ldots,j'_{sn-1}\in J$ such that $0\neq v=(j'_1t)\cdots (j'_{sn-1}t)w$. For all $i$, $0\neq w_i=(j'_it)\cdots(j'_{sn-1}t)w$ and the various $w_i$ belong to all the other weight spaces in the orbit. Hence, $\Supp(M)=\cO$. In particular, if $w_i$ corresponds to a break, then $0\neq w_{i-1}=(j_{i-1}t)w_i$. Thus, all the breaks are indeed $J$-breaks.
\end{proof}

Our next theorem classifies simple weight modules under Hypothesis \ref{hyp.finorb}. We note that the modules $M(\frm,N,\theta )$ appearing in the theorem are those defined by Lemma \ref{lem.simples-nobr}. In the next result we classify isomorphisms between these objects.

\begin{lemma}
Suppose that $\cO=\ZZ\cdot \frm$ is a finite orbit. Let $N$ (resp. $N'$) be a vector space over $R/\frm$ and let $\theta \in \Aut_{R/\frm}(N)$ (resp. $\theta' \in \Aut_{R/\frm}(N')$) be an invertible linear transformation that leaves no nontrivial subspace of $N$ (resp. $N'$) invariant. Then $M(\frm,N,\theta )\simeq M(\frm,N',\theta ')$ if and only if there is an isomorphism of $R/\frm$-vector spaces $\varphi:N\to N'$ with $\varphi \theta =\theta '\varphi$.
\end{lemma}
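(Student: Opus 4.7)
The plan is to prove the two implications separately. The reverse direction $(\Leftarrow)$ is a direct construction using $\varphi$; the forward direction $(\Rightarrow)$ leverages the intrinsic $\Lambda_n/(\frm)$-module structure on the weight space at $\frm$, together with Lemma~\ref{lem.Lambdathetamod}, to extract the intertwining relation.

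For $(\Leftarrow)$, assume $\varphi\colon N\to N'$ is $R/\frm$-linear with $\varphi\theta=\theta'\varphi$. I would define $\Phi\colon M(\frm,N,\theta)\to M(\frm,N',\theta')$ summand-wise by $\Phi({}^{\sigma^i}v):={}^{\sigma^i}\varphi(v)$ for $0\leq i\leq n-1$. This is clearly an isomorphism of $R$-modules preserving the weight decomposition, so the content is to check compatibility with the algebra generators $jt$ and $\sigma\inv(h)t\inv$ (see Lemma~\ref{lem.props}\eqref{gprops2}). For $0\leq i\leq n-2$, the first case of \eqref{eq.jact} and $R/\frm$-linearity of $\varphi$ give
\[\Phi(jt\cdot{}^{\sigma^i}v)=\Phi(j\cdot{}^{\sigma^{i+1}}v)=j\cdot{}^{\sigma^{i+1}}\varphi(v)=jt\cdot\Phi({}^{\sigma^i}v).\]
The essential case is $i=n-1$: here $\Phi(jt\cdot{}^{\sigma^{n-1}}v)=\Phi(j\theta v)=j\varphi(\theta v)$, while $jt\cdot\Phi({}^{\sigma^{n-1}}v)=j\theta'\varphi(v)$; these agree precisely by $\varphi\theta=\theta'\varphi$. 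The check for $\sigma\inv(h)t\inv$ is parallel, with the essential case $i=0$ reducing to the equivalent relation $\varphi\theta\inv=(\theta')\inv\varphi$.

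For $(\Rightarrow)$, let $\Phi\colon M(\frm,N,\theta)\to M(\frm,N',\theta')$ be a $B$-module isomorphism. Weight spaces are intrinsically defined by $\{m:\frn\cdot m=0\}$, so $\Phi$ restricts to isomorphisms between corresponding weight spaces. In particular, $\varphi:=\Phi|_{M_\frm}\colon N\to N'$ is an $R/\frm$-linear isomorphism. Since the Veronese subalgebra $\Lambda_n$ preserves $M_\frm$, this $\varphi$ is automatically a $\Lambda_n/(\frm)$-module isomorphism. By Lemma~\ref{lem.Lambdathetamod}, for any $\lambda\in I^{(n)}\setminus\frm$ the element $\lambda t^n$ acts on $N$ as $\bar\lambda\theta$ and on $N'$ as $\bar\lambda\theta'$, where $\bar\lambda\in(R/\frm)^\times$ denotes the image of $\lambda$. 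Since $\cO$ contains no breaks, $I^{(n)}\not\subseteq\frm$, so such $\lambda$ exists. Applying $\Lambda_n/(\frm)$-linearity of $\varphi$ to $\lambda t^n$ yields $\bar\lambda\varphi(\theta v)=\bar\lambda\theta'\varphi(v)$ for every $v\in N$; cancelling the unit $\bar\lambda$ gives $\varphi\theta=\theta'\varphi$.

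I anticipate no substantial obstacle. The main subtlety is recognizing that the single weight space $M_\frm$, together with its $\Lambda_n/(\frm)$-module structure, already determines $\theta$ up to $R/\frm$-rescaling (via Lemma~\ref{lem.Lambdathetamod}); this makes it possible to extract the intertwining relation from the $B$-isomorphism by restriction to $M_\frm$ alone, rather than chasing the more elaborate twisted structure on all $n$ weight spaces.
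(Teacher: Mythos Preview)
Your proposal is correct and takes essentially the same approach as the paper: restrict a $B$-isomorphism to the $\frm$-weight space and use the $\Lambda_n/(\frm)$-action (specifically $at^n$ acting as $\bar a\,\theta$ for $a\in I^{(n)}\setminus\frm$) to extract $\varphi\theta=\theta'\varphi$, and conversely build the $B$-isomorphism as $\bigoplus_i {}^{\sigma^i}\varphi$. Your reverse direction is in fact more explicit than the paper's, which simply asserts that $\check\varphi=\bigoplus_{i=0}^{n-1}{}^{\sigma^i}\varphi$ is a $B$-isomorphism without writing out the generator checks.
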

\begin{proof}
Suppose that $\check{\varphi}:M(\frm,N,\theta )\to M(\frm,N',\theta ')$ is an isomorphism of simple weight modules for $B$. Then $\varphi:=\check{\varphi}|_N:N\to N'$ is an isomorphism of $R/\frm$-vector spaces. Let $a\in I^{(n)}\setminus \frm$ and $a'\in R$ with $a'a\equiv 1\pmod \frm$. Then for all $v\in N$, we have
\[\varphi\theta (v)=\varphi(a'a\theta v)=\varphi(a'at^nv)=a'at^n\varphi(v)=a'a\theta '\varphi(v)=\theta '\varphi(v)\]
so indeed $\varphi \theta =\theta '\varphi$. Conversely, if $\varphi:N\to N'$ is an isomorphism and $\theta ,\theta '$ are invertible linear transformations such that $\varphi \theta =\theta '\varphi$, then $\varphi$ is an isomorphism of $\Lambda_n/\frm$-modules. It follows that the twists ${}^{\sigma^i}\varphi:{}^{\sigma^i}N\to{}^{\sigma^i}N'$ give isomorphisms of $\Lambda_n/\sigma^i(\frm)$-modules and by defining $\check{\varphi}:=\oplus_{i=0}^{n-1}{}^{\sigma^i}\varphi$ we obtain the required isomorphism $\check{\varphi}:M(\frm,N,\theta )\to M(\frm,N',\theta ')$.
\end{proof}

Suppose that $\cO=\ZZ\cdot \frm$ is a finite orbit with breaks. Let $\{ \sigma^i(\frm) \mid i \in I\}$ with $I \subset \{0,\hdots,n-1\}$, $|I|=m$, be the set of breaks. For each $i_k$ with $1 \leq k < m$, we define the $R$-module
\[ M(\cO,i_k)=\bigoplus_{\ell=i_k+1}^{i_{k+1}}R/\sigma^{\ell}(\frm)v_\ell.\]
By a similar argument as in Proposition  \ref{prop.props}\eqref{props2}, there is a well-defined $B$-action on $M(\cO,i_k)$ given by
\[jt\cdot v_\ell=
\begin{cases}
jv_{\ell+1} & \text{ if }\ell<i_{k+1} \\ 
0 & \text{ if }\ell=i_{k+1},
\end{cases}
\qquad
\sigma\inv(h)t\inv\cdot v_\ell=
\begin{cases}
\sigma\inv(h)v_{\ell-1} & \text{ if }\ell>i_k+1 \\ 
0 & \text{ if }\ell=i_{k}+1,\end{cases}\]
for all $j \in J$ and $h \in H$. Similarly, we define
\[ M(\cO,i_m)=\bigoplus_{\ell=i_m+1}^{n+i_{1}}R/\sigma^{\ell}(\frm)v_\ell\]
with $B$-action given by
\[
jt\cdot v_\ell =
\begin{cases}jv_{\ell+1} & \text{ if }\ell<n+i_{1} \\ 
0 & \text{ if }\ell=n+i_{1},\end{cases} \qquad  \sigma\inv(h)t\inv\cdot v_\ell=
\begin{cases}
\sigma\inv(h)v_{\ell-1} & \text{ if }\ell>i_m+1 \\ 
0 & \text{ if }\ell=i_{m}+1,\end{cases}
\]
for all $j\in J$ and $h\in H$.

We can now state our classification of the simple weight modules supported on a finite orbit. When there are no breaks on the orbit, the structure is analogous to the situation for rank 1 GWAs, with the module depending on the choice of a linear transformation with no invariant subspaces, which in \cite{DGO} was taken to be the companion matrix of an irreducible polynomial. When there are breaks, some simples are supported on the interval between two breaks, similarly to the modules for GWAs. However, there are also modules with full support and, unlike in \cite{DGO}, we cannot  describe them explicitly.

\begin{theorem}\label{thm.fin-orb-class}
Suppose that $\cO=\ZZ\cdot \frm$ is a finite orbit.
\begin{enumerate}
\item \label{fin1} Assume there are no breaks for $B$ on $\cO$ and fix $\frm\in\cO$. Up to isomorphism, the simple weight modules in $(B,R)\wmodO$ are of the form $M(\frm, N ,\theta )$ where where $N$ is a vector space over $R/\frm$ and $\theta \in \Aut_{R/\frm}(N)$ is an invertible linear transformation that leaves no nontrivial subspace invariant.

\item \label{fin2} Assume there are breaks. Let $\{\sigma^{i_1}(\frm),\ldots,\sigma^{i_m}(\frm)~|~0\leq i_1<\cdots <i_m\leq n-1\}\neq \emptyset$ be the set of breaks. Then all simple weight modules in $(B,R)\wmodO$ are of one of the following three kinds.

\begin{enumerate}
\item $M(\cO,i_k)$, $k=1,\hdots,m$, as defined above.
\item $M$ such that $\Supp(M)=\cO$ and all breaks are $J$-breaks as in Definition \ref{def.HJbreaks}.
\item $M$ such that $\Supp(M)=\cO$ and all breaks are $H$-breaks as in Definition \ref{def.HJbreaks}.
\end{enumerate}
\end{enumerate}
\end{theorem}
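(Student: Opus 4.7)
The proof splits into parts (1) and (2), with the preceding lemmas accomplishing most of the work. For part (1), let $M$ be a simple object of $(B,R)\wmodO$ and fix $\frm \in \cO$. Lemma \ref{lem.simpleLam-mod} shows $M_\frm$ is a simple $\Lambda_n/(\frm)$-module; Lemma \ref{lem.Lambdathetamod} then produces an invertible $\theta \in \Aut_{R/\frm}(M_\frm)$ with no nontrivial invariant subspaces, namely the operator describing the $t^n$-action; and Lemma \ref{lem.simples-nobr} shows that $M(\frm, N, \theta)$ is simple, where $N := M_\frm$. The remaining task is to build an isomorphism $\varphi \colon M(\frm, N, \theta) \iso M$.

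I would define $\varphi$ by setting $\varphi|_N = \id_{M_\frm}$ and propagating forward. For each $i = 1, \ldots, n-1$, choose $j_0 \in J \setminus \sigma^i(\frm)$, which exists because $\sigma^i(\frm) \notin \css(B)$ in the no-breaks setting, so that $j_0 t \colon M_{\sigma^{i-1}(\frm)} \sto M_{\sigma^i(\frm)}$ is a bijection by Proposition \ref{prop.inj-maps}. Set $\varphi({}^{\sigma^i} v) := j_0\inv \cdot (j_0 t \cdot \varphi({}^{\sigma^{i-1}} v))$, with $j_0\inv$ taken in $R/\sigma^i(\frm)$. Commutativity of $R$ shows this is independent of $j_0$, and $B$-linearity with respect to $B_{\pm 1}$ is immediate by construction. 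The crucial verification is that $\varphi$ respects the wrap-around relation $jt \cdot {}^{\sigma^{n-1}} v = j \theta v$; this reduces to comparing the $B_n$-action on $M_\frm \subset M$ (encoded by $\theta$ via Lemma \ref{lem.Lambdathetamod}) with the action on $N \subset M(\frm, N, \theta)$ (constructed from the same $\theta$), so the two agree.

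For part (2), let $M$ be a simple weight module supported on $\cO$ and apply Lemma \ref{lem.HJbreaks} as a trichotomy: if some break $\frm' \in \Supp(M)$ is a $J$-break (resp. $H$-break) for $M$, the lemma gives $\Supp(M) = \cO$ with all breaks being $J$-breaks (resp. $H$-breaks), placing $M$ in family (b) (resp. (c)). Otherwise every break in $\Supp(M)$ is neither a $J$- nor an $H$-break for $M$, and I would first show $\Supp(M)$ contains a break: starting at any $\frm' \in \Supp(M)$, Proposition \ref{prop.inj-maps}\eqref{inj1} propagates the support forward through non-breaks (injectivity from a nonzero weight space forces the next weight space to be nonzero), and a finite orbit containing breaks eventually forces one into $\Supp(M)$.

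At every break $\sigma^{i_{k+1}}(\frm) \in \Supp(M)$, the ``neither'' hypothesis gives $B_1 M_{\sigma^{i_{k+1}}(\frm)} = 0$ and $B_{-1} M_{\sigma^{i_{k+1}+1}(\frm)} = 0$, so no $B$-action crosses the break. Combined with the simplicity of $M$, this confines $\Supp(M)$ to a single arc $A_k = \{\sigma^{i_k+1}(\frm), \ldots, \sigma^{i_{k+1}}(\frm)\}$. Within the arc, the vanishing of $B_1$ at the terminal break implies $B_n \cdot M_{\frm''} = 0$ for every $\frm'' \in A_k$, so the argument of Proposition \ref{prop.dim-weight}\eqref{dim1} adapts---avoiding the wrap-around obstruction inherent to general finite orbits---to show each weight space is one-dimensional; an argument parallel to Theorem \ref{thm.inf-orb-class}\eqref{inf2} then identifies $M$ with $M(\cO, i_k)$, completing family (a). I expect the main obstacle to be the wrap-around compatibility check in part (1): this is the one place where the finite-orbit structure interacts nontrivially with the twist $\theta$, whereas the rest of the proof is either a direct case analysis using Lemma \ref{lem.HJbreaks} or a routine adaptation of the infinite-orbit case.
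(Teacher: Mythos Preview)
Your proposal is correct and follows essentially the same approach as the paper: part (1) assembles Lemmas \ref{lem.simpleLam-mod}--\ref{lem.simples-nobr} and builds the isomorphism by propagating along $jt$-maps and checking the wrap-around compatibility with $\theta$, exactly as the paper does; part (2) is the trichotomy from Lemma \ref{lem.HJbreaks} followed by the Theorem \ref{thm.inf-orb-class}\eqref{inf2} argument on the arc between consecutive breaks. Your extra remarks that $B_n \cdot M_{\frm''} = 0$ in the ``neither'' case (removing the finite-orbit wrap-around obstruction to Proposition \ref{prop.dim-weight}) make explicit a point the paper leaves inside its reference to Theorem \ref{thm.inf-orb-class}(2), but the overall strategy is the same.
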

\begin{proof}
\eqref{fin1}
Suppose that $B$ has no breaks on $\cO$ and that $M\in(B,R)\wmodO$ is a simple weight module. Let $\frm\in\cO$, then $N=M_\frm$ is a simple $\Lambda_n/(\frm)$-module by Lemma~\ref{lem.simpleLam-mod}, which, by Lemma~\ref{lem.Lambdathetamod}, implies that there exists $\theta \in \Aut_{R/\frm}(N)$ which satisfies \eqref{eq.thetaaction} and that leaves no nontrivial subspaces invariant. We claim that $M\simeq M(\frm,N,\theta )$ as weight $B$-modules. By Lemma~\ref{lem.simpleLam-mod}, $M_{\sigma^i(\frm)}\simeq {}^{\sigma^i}N$ as $R/\sigma^i(\frm)$-modules for all $i=0, \ldots, n-1$. Let $v\in N$, since $J\not\subset\sigma^i(\frm)$, for all $i$, pick $j_0,\ldots,j_{n-1}\in J$ such that $j_i\not\in \sigma^i(\frm)$. For $i=1,\ldots,n-1$, define inductively $v={}^{\sigma^0}v$, ${}^{\sigma^{i}}v:=r_{i-1}(j_{i-1}t) ({}^{\sigma^{i-1}}v) \in M_{\sigma^i(\frm)}$ where $r_{i-1}\in R\setminus\sigma^i(\frm)$ is such that $r_{i-1}j_{i-1}\equiv 1\pmod {\sigma^i(\frm)}$. Notice that by construction  $(jt)({}^{\sigma^{i}}v)=j({}^{\sigma^{i+1}}v)$ hence \eqref{eq.jact} is satisfied for $0\leq i\leq n-2$. Finally, we have that for all $j\in J$
\begin{align*}
jt({}^{\sigma^{n-1}}v)
    &=jt(r_{n-1}j_{n-1}t)\cdots (r_1j_1)v \\
    &= j\sigma(r_{n-1}j_{n-1})\cdots\sigma^{n-1}(r_1j_1)t^n v \\
    &=j\sigma(r_{n-1}j_{n-1})\cdots\sigma^{n-1}(r_1j_1) \theta v \quad\text{by \eqref{eq.thetaaction}} \\
    &= j\theta v,
\end{align*}
where the last equality follows since $\sigma^{n-i}(r_{i}j_{i})\equiv 1\pmod \frm$ for all $i$.

The computation to verify \eqref{eq.hact} is entirely analogous and will be omitted.


\eqref{fin2} Now suppose that there are breaks for $B$ in $\cO$. As in the proof of Proposition  \ref{prop.props}\eqref{props2}, the modules $M(\cO,i_k)$ are simple. Let $M$ be a simple weight module with support in $\cO$. From Lemma \ref{lem.HJbreaks}, if any of the breaks in $\cO$ is a $J$-break (resp. $H$-break) for $M$, then $\Supp(M)=\cO$ and all breaks satisfy the same condition, so we are in one of the cases (b) or (c). Otherwise, if there are no $J$-breaks nor $H$-breaks for $M$ on $\cO$ and $M_{\sigma^r(\frm)}\neq 0$ for some $r$, then the support of $M$ has to be contained between the two consecutive breaks such that $i_k<r\leq i_{k+1}$ (or $i_m$ and $i_1$ if $r>i_m$). It then follows from the same argument as in the proof of Theorem \ref{thm.inf-orb-class}(2) that $M$ is isomorphic to $M(\cO,i_k)$.
\end{proof}

We conclude with an example that suggests giving a complete classification of all the simple weight modules with $J$-breaks and $H$-breaks is a hard problem.

\begin{example}\label{ex.finorb-mod}
Let $R=\CC[z_1,z_2]$, $q=e^{\frac{2\pi}{3}i}$, so that $q^3=1$. We set $\sigma(z_1)=qz_1$, $\sigma(z_2)=q\inv z_2=q^2z_2$, $J=(z_1-1,z_2-1)$, $H=\sigma(J)=(z_1-q^2,z_2-q)$. The algebra $B$ is then generated by $R$ together with $x_1=(z_1-1)t$, $x_2=(z_2-1)t\in Jt$, $y_1=(z_1-1)t\inv$, $y_2=(z_2-1)t\inv\in \sigma\inv(H)t\inv$.

Consider the orbit $\cO=\ZZ\cdot \frm$ with $\frm=J=(z_1-1,z_2-1)$, Then 
\[ \cO=\{\frm,\sigma(\frm),\sigma^2(\frm)\}= \{(z_1-1,z_2-1),(z_1-q^2,z_2-q),(z_1-q,z_2-q^2)\}.\]
Since $HJ=\frm\sigma(\frm)$, the breaks are $\{\frm,\sigma^2(\frm)\}$, so $i_1=0$ and $i_2=2$. We have two simple weight modules of the kind described in Theorem \ref{thm.fin-orb-class}(2a).
\begin{itemize}
\item The first is
\[ M(0)=\CC[z_1,z_2]/(z_1-q^2,z_2-q)v_1\oplus \CC[z_1,z_2]/(z_1-q,z_2-q^2)v_2\simeq \CC v_1\oplus \CC v_2\] 
with $B$-action given by
\begin{align*}
x_1v_1 &= (q-1)v_2,  & x_2v_1 &= (q^2-1)v_2, &
x_1v_2 &= x_2v_2 = 0, \\
y_1v_2 &= (q^2-1)v_1, & y_2v_2 &= (q-1)v_1, &
y_1v_1 &= y_2v_1=0.
\end{align*}
\item The second is \[M(2)=\CC[z_1,z_2]/(z_1-1,z_2-1)v_3\simeq \CC v_3\] 
with $B$-action given by $x_1v_3=x_2v_3=y_1v_3=y_2v_3=0$.
\end{itemize}
Additionally, we have the simple weight modules supported on the whole orbit. If we assume that all the weight spaces are one-dimensional over $\CC[z_1,z_2]/\sigma^i(\frm)\simeq \CC$, then 
these are described by cases (2b) and (2c) of Theorem \ref{thm.fin-orb-class}. First, set
\begin{align*}
M &= \CC[z_1,z_2]/(z_1-1,z_2-1)v_0\oplus \CC[z_1,z_2]/(z_1-q^2,z_2-q)v_1\oplus \CC[z_1,z_2]/(z_1-q,z_2-q^2)v_2 \\
&\simeq \CC v_0\oplus \CC v_1\oplus \CC v_2.
\end{align*}
Since $\sigma(\frm)$ is not a break, we have
\[ x_1v_1=(q-1)v_2, \quad x_2v_1=(q^2-1)v_2, \quad y_1v_2=(q^2-1)v_1,\quad y_2v_2=(q-1)v_1.\]
The remaining actions depend on whether the module has $J$-breaks or $H$-breaks.
\begin{itemize}
\item If we have $J$-breaks, then
\begin{align*}
y_1v_1&=y_2v_1=y_1v_0=y_2v_0=0, \\
x_1v_0&=(q^2-1)v_1,\quad x_2v_0=(q-1)v_1. 
\end{align*}
Now for any $(0,0)\neq (\alpha,\beta)\in\CC^2$ we have $x_1v_2=\alpha v_0$ and $x_2v_2=\beta v_0$. Note if $(\alpha,\beta)=(0,0)$ then $\sigma^2(\frm)$ would not be a $J$-break. 
    
\item If we have $H$-breaks, then
\begin{align*}
x_1v_2 &=x_2v_2=x_1v_0=x_2v_0=0, \\
y_1v_0&=(q-1)v_2,\quad y_2v_0=(q^2-1)v_2.
\end{align*}
Now for any $(0,0)\neq (\alpha,\beta)\in\CC^2$ we have $y_1v_1=\alpha v_0$ and $y_2v_1=\beta v_0$. Note if $(\alpha,\beta)=(0,0)$ then $\frm$ would not be a $H$-break. 
\end{itemize}

However, unlike the case of GWAs over $\CC$, we can have simple weight modules of the kinds from Theorem \ref{thm.fin-orb-class}(2b)-(2c) where the weight spaces are not all one-dimensional, as follows. Set
\begin{align*}
M&=\bigoplus_{i=1}^d\left(\CC[z_1,z_2]/(z_1-1,z_2-1)v_{0i}\oplus \CC[z_1,z_2]/(z_1-q^2,z_2-q)v_{1i}\oplus \CC[z_1,z_2]/(z_1-q,z_2-q^2)v_{2i}\right) \\
&\simeq\bigoplus_{i=1}^d\left( \CC v_{0i}\oplus \CC v_{1i}\oplus \CC v_{2i}\right).
\end{align*}
For any $A,A'\in M_d(\CC)$, considered as linear transformations of the weight space $V:=\bigoplus_{i=1}^d \CC v_{0i}$, we define a $B$-action as follows:
\begin{align*}
x_1v_{1i} &= (q-1)v_{2i}, & 
    x_1v_{0i} &= (q^2-1)v_{1i}, &
    x_1v_{2i} &= A v_{0i}, \\
x_2v_{1i} &= (q^2-1)v_{2i}, &
    x_2v_{0i} &= (q-1)v_{1i}, &
    x_2v_{2i} &= A' v_{0i}, \\
y_1v_{2i} &= (q^2-1)v_{1i}, &
    y_1v_{1i} &= y_1v_{0i}=0, \\
y_2v_{2i} &= (q-1)v_{1i}, &
    y_2v_{1i} &= y_2v_{0i}=0.
\end{align*}
As long as $V$ does not have any nontrivial subspace that is invariant both for $A$ and for $A'$, the resulting weight module $M$ is simple. For example we can take
\[A=
\begin{bmatrix}1 & 1 & 0 & \cdots & 0 \\
0 & 1 & 1 & \ddots & \vdots \\ 
\vdots & \ddots & \ddots &\ddots & 0 \\
\vdots & & \ddots & \ddots & 1 \\
0 & \cdots & \cdots & 0 & 1 \end{bmatrix} \qquad 
\quad\text{and}\quad
A'=\begin{bmatrix}
1 & 0 & \cdots & \cdots & 0 \\
1 & 1 & \ddots &  & \vdots \\ 
0 & 0 & \ddots &\ddots & \vdots \\
\vdots & & \ddots & \ddots & 0 \\
0 & \cdots & \cdots & 0 & 1 \end{bmatrix}.\]
\end{example}


\end{document}